\documentclass{amsart}
\usepackage[T1]{fontenc}
\usepackage[utf8]{inputenc}
\usepackage{lmodern}
\usepackage{amsmath,amssymb,amsthm,mathtools,mathrsfs}
\usepackage{geometry}
\usepackage{booktabs,longtable,array,enumitem}
\usepackage{microtype}
\usepackage{xcolor}
\usepackage{tikz}
\usetikzlibrary{arrows.meta,positioning,calc,shapes.geometric}
\usepackage{hyperref}
\usepackage{cleveref}
\makeatletter
\renewcommand{\paragraph}{\@startsection{paragraph}{4}{\z@}%
  {2.0ex \@plus .7ex \@minus .2ex}%
  {0.55ex}%
  {\normalfont\normalsize\bfseries}}
\makeatother
\hypersetup{colorlinks=true,linkcolor=blue,citecolor=blue,urlcolor=blue,
  pdftitle={Farey Symbols for the Picard Group},
  pdfauthor={Devendra Tiwari and Helena Verrill}}
\newtheorem{theorem}{Theorem}[section]
\newtheorem{proposition}[theorem]{Proposition}
\newtheorem{lemma}[theorem]{Lemma}
\newtheorem{corollary}[theorem]{Corollary}
\theoremstyle{definition}
\newtheorem{definition}[theorem]{Definition}
\newtheorem{example}[theorem]{Example}
\newtheorem{problem}[theorem]{Problem}
\theoremstyle{remark}
\newtheorem{remark}[theorem]{Remark}
\newcommand{\HH}{\mathbb H}
\newcommand{\PP}{\mathbf P}
\newcommand{\ZZ}{\mathbf Z}
\newcommand{\QQ}{\mathbf Q}
\newcommand{\OO}{\mathcal O}
\newcommand{\PSL}{\mathrm{PSL}}
\newcommand{\PGL}{\mathrm{PGL}}

\newcommand{\cD}{\mathcal D}
\DeclareMathOperator{\Isom}{Isom}
\DeclareMathOperator{\Stab}{Stab}

\title{Farey Symbols for the Picard Group}
\author{Devendra Tiwari}
\address{Statistics and Mathematics Department\\
Indian Statistical Institute\\
Bangalore 560 059\\
INDIA}
\email{devendra9.dev@gmail.com}
\author{Helena Verrill}
\address{Mathematics Institute\\
University of Warwick\\
Coventry CV4 7AL\\
ENGLAND}
\email{H.A.Verrill@warwick.ac.uk}

\subjclass[2020]{Primary 11F06; Secondary 20H05, 20H10, 11F67}
\keywords{Bianchi groups, Picard group, Farey symbols, hyperbolic 3-space, fundamental polyhedra, modular symbols, Steinberg module}
\date{}

\begin{document}
\begin{abstract}
We introduce Picard Farey symbols for finite-index subgroups of
\(G=\PSL_2(\ZZ[i])\), using the Gaussian Farey tessellation of \(\HH^3\) by
ideal octahedra.  A symbol records finitely many Gaussian-rational octahedral
occurrences, their local stabilizers, and ordered cooriented face pairings.  We
prove that a valid marked symbol reconstructs the finite Picard-cell action and
hence the subgroup, and we work out a range of exact examples.  For a
torsion-free subgroup of index \(n\), every fundamental domain formed from
complete Farey octahedra contains \(n/12\) octahedra; a spanning-tree
construction gives one with at most \(n/4+1\) side-pairing transformations.  We
also identify the Gaussian octahedral edge--face complex with the integral
rank-two Steinberg presentation for \(\QQ(i)\).  Thus the same decorated
geometry directly determines a finite presentation of coefficient-valued
Bianchi modular symbols; for \(\Gamma_1(2+i)\) we give the complete
\(3\times4\) group-ring boundary matrix.  Canonical reduction and an effective
Hecke-compatible reduction remain open.
\end{abstract}
\maketitle

\section{Introduction}\label{sec:intro}

\subsection{From special polygons to Picard Farey polyhedra}\label{sec:intro-motivation}
Kulkarni's special-polygon theory for \(\PSL_2(\ZZ)\) is a particularly successful meeting point of arithmetic, hyperbolic geometry and subgroup theory.  A finite-index subgroup can be represented by a special polygon whose ideal boundary is a generalized Farey sequence; the even, odd and free side identifications are recorded by a Farey symbol; and an admissible choice reflects the free-product structure
\[
   \PSL_2(\ZZ)\cong C_2*C_3
\]
so efficiently that one obtains independent generators and sharp statements about the number of sides \cite{Kulkarni1991}.  The geometry does more than provide a drawing of a fundamental domain: it supplies a compact arithmetic language for the subgroup.

The same question is less straightforward for a Bianchi group
\[
   G_K=\PSL_2(\mathcal O_K),\qquad K=\QQ(\sqrt{-d}).
\]
The natural quotient object is now two-dimensional.  Vertex and edge stabilizers occur, genuine two-cell relations survive after a maximal-tree reduction, and the Kurosh argument behind independent generators for the modular group has no literal three-dimensional counterpart.  Thus a useful analogue of a Farey symbol cannot consist only of boundary cusps and paired sides.  It must remember enough of the three-dimensional gluing to recover the quotient complex and the Poincar\'e presentation, while suppressing the fixed twelve-cell subdivision until it is actually needed.

The Picard group
\[
   G=\PSL_2(\ZZ[i])
\]
is the natural first case.  Its standard hyperbolic cell is classical, its Gaussian Farey geometry is explicit, and the ambient Farey tessellation of \(\HH^3\) consists of regular ideal octahedra \cite{Swan1971,Hockman2019,Hockman2020a}.  A standard octahedron has vertices
\[
   \infty,\quad 0,\quad 1,\quad i,\quad 1+i,\quad w:=\frac{1+i}{2},
\]
and its stabilizer in \(G\) has order twelve.  These are not merely convenient cusp labels.  They belong to the Gaussian rational projective line \(\PP^1(\QQ(i))\).  If \(\alpha/\gamma\) and \(\beta/\delta\) are reduced Gaussian rationals, Hockman's Farey-neighbour relation is
\[
   \frac{\alpha}{\gamma}\sim\frac{\beta}{\delta}
   \qquad\Longleftrightarrow\qquad
   N(\alpha\delta-\beta\gamma)=1,
\]
and the corresponding geodesics form the one-skeleton of the Gaussian Farey tessellation \cite{Hockman2019,Hockman2020a}.  This arithmetic viewpoint goes back to Schmidt's Farey triangles and quadrangles over imaginary quadratic fields \cite{Schmidt1967} and is developed explicitly by Hockman.  Thus the higher-dimensional replacement for a generalized Farey \emph{sequence} is naturally a finite two-dimensional incidence pattern of Gaussian rational points.

Kulkarni's terminology will be retained whenever there is a literal classical counterpart.  His consecutive intervals are \emph{even}, \emph{odd}, or \emph{free}; the first two record elliptic side identifications of orders two and three, while free intervals occur in paired pairs.  Belabas--Bernardi--Perrin-Riou reformulate the boundary combinatorics as a cyclic ``necklace'' of rational arcs equipped with an involution and elliptic marks \cite{BelabasBernardiPerrinRiou2020,BernardiPerrinRiou2020}.  The boundary of a Farey octahedron is instead a triangulated two-sphere, so there is no canonical cyclic ordering of all Gaussian rational cusps.  Our Picard Farey symbol replaces the necklace by a decorated Farey sphere: rational triangular faces are paired, and local octahedral symmetries are recorded separately.
The terminology ``Farey'' refers specifically to the arithmetic ideal-boundary structure: Gaussian-rational vertices together with their unimodular edge and triangular incidences.  The subgroup-specific information is separate: local isotropy and the ordered, cooriented face pairings.  Keeping these two roles distinct is essential below.

The standard Picard fundamental cell subdivides this octahedron into twelve cells.  For a finite-index subgroup $H$, we shall call an element
\[
   x=HgK\in H\backslash G/K
\]
an \emph{octahedral occurrence}: it is one occurrence, in the quotient by $H$, of the standard Gaussian Farey octahedron $g\mathcal O$.  Choosing $g$ displays its six Gaussian-rational ideal vertices on the boundary sphere; changing $g$ within the same double coset only changes that placement by the subgroup action and an octahedral symmetry.  Thus ``occurrence'' refers to a quotient copy of the octahedral carrier, not to a new combinatorial type.

The guiding question of this paper is therefore:

\begin{quote}
\emph{Can a finite-index Picard subgroup be described directly in terms of the quotient of the Gaussian Farey-octahedral tessellation, with enough local and face-pairing information to reconstruct the usual Picard-cell geometry?}
\end{quote}

The examples are used throughout as part of the exposition rather than being postponed to a final calculation section.  In particular, the level-\((2+i)\) family will let us watch one fixed Gaussian Farey octahedron change from an orbifold symbol with local group \(C_2\) to a torsion-free one-octahedron symbol, and then to a five-octahedron normal cover.  This progression is intended to play the same explanatory role that worked Farey polygons play in Kulkarni's paper.

There are two reasons this is not automatic.  First, an unlabelled octahedron does not determine a subgroup: Lee's two torsion-free index-twelve groups \cite[pp.~184--190]{Lee1984} and Hockman's index-twelve torsion example \cite{Hockman2020a} use the same ideal-octahedral carrier but different face pairings and lower-dimensional cycles.  Second, for a subgroup with torsion, a Farey octahedron can have nontrivial local isotropy.  The correct object must therefore retain both the local subgroup of the octahedral stabilizer and the ordered identifications of the octahedral faces.

\begin{figure}[htbp]
\centering
\begin{tikzpicture}[>=Latex,scale=.92]
  \begin{scope}[xshift=-4.5cm]
    \coordinate (a) at (-1.45,0); \coordinate (b) at (1.45,0); \coordinate (c) at (0,2.45);
    \draw[thick] (a)--(b)--(c)--cycle;
    \fill (a) circle (1.5pt) node[below] {$0$};
    \fill (b) circle (1.5pt) node[below] {$1$};
    \fill (c) circle (1.5pt) node[above] {$\infty$};
    \node[align=center] at (0,-.75) {classical Farey triangle\\and paired sides};
  \end{scope}
  \draw[-{Latex[length=3mm]},very thick] (-1.55,1.1)--(1.15,1.1)
       node[midway,above,align=center] {one dimension\\higher};
  \begin{scope}[xshift=4.3cm]
    \coordinate (N) at (0,3.0); \coordinate (S) at (0,-.7);
    \coordinate (L) at (-1.75,.85); \coordinate (M) at (-.55,.08);
    \coordinate (R) at (1.75,.85); \coordinate (U) at (.55,1.65);
    \draw[thick] (L)--(M)--(R)--(U)--cycle;
    \draw[thick] (N)--(L) (N)--(M) (N)--(R) (N)--(U);
    \draw[thick] (S)--(L) (S)--(M) (S)--(R); \draw[dashed] (S)--(U);
    \fill (N) circle (1.4pt) node[above] {$\infty$};
    \fill (S) circle (1.4pt) node[below] {$w$};
    \fill (L) circle (1.4pt) node[left] {$0$};
    \fill (M) circle (1.4pt) node[below left] {$1$};
    \fill (R) circle (1.4pt) node[right] {$1+i$};
    \fill (U) circle (1.4pt) node[above right] {$i$};
    \node[align=center] at (0,-1.35) {Gaussian Farey octahedron\\with local groups and face transport};
  \end{scope}
\end{tikzpicture}
\caption{The passage from the classical Farey tessellation to the Gaussian Farey tessellation.  The picture is schematic: the higher-dimensional analogue must retain local stabilizers and ordered face identifications, not merely an unlabelled ideal cell.}
\label{fig:intro-classical-picard}
\end{figure}

A second theme is modular symbols.  Cremona's work already shows that, for the five Euclidean imaginary quadratic fields, distinguished cusp-to-cusp geodesics form the one-skeleton of an ideal tessellation and polygonal faces generate the relations needed to compute homology and Hecke operators \cite{CremonaThesis,Cremona1984}.  Thus the subgroup problem and the modular-symbol problem should not be conflated.  The first asks for a finite three-dimensional description that reconstructs the subgroup geometry; the second asks for a finite presentation of the degree-zero cusp-divisor module.  One purpose of the present paper is to show that the Picard Farey description supplies the finite incidence information required by the existing Cremona--Manin formalism.

\subsection{Main results}\label{sec:intro-results}
We now state the three principal results.  Their precise hypotheses and proofs appear in Sections~\ref{sec:picard-farey}--\ref{sec:modular-symbols}.

Let \(\OO\) be the positively normalized Gaussian Farey octahedron and put
\[
K=\Stab_G(\OO),\qquad |K|=12.
\]
For a finite-index subgroup \(H\leq G\) and an octahedral orbit \(x=HgK\), put
\[
J_x=g^{-1}Hg\cap K.
\]
Then
\[
\pi^{-1}(HgK)\cong J_x\backslash K,
\qquad
[G:H]=\sum_{x\in H\backslash G/K}[K:J_x].
\tag{1.1}\label{eq:intro-index}
\]
If \(m=|H\backslash G/K|\) and \(n=[G:H]\), then
\[
\left\lceil\frac n{12}\right\rceil\leq m\leq n,
\]
and \(m=n/12\) when \(H\) is torsion-free.  This measures the number of octahedral occurrences; it is not a claim about the total storage or computational complexity of the decorated symbol.

\paragraph{Theorem A: Picard Farey reconstruction.}
A candidate marked Picard Farey symbol contains finitely many oriented Gaussian Farey octahedra
\[
\OO_x=g_x\OO
=\bigl(g_x\infty,g_x0,g_x1,g_xi,g_x(1+i),g_xw\bigr),
\]
local subgroups, and paired outward-cooriented triangular faces with source-to-target transports.  The face transport \(q:F\to F'\) satisfies
\[
qF=\overline{F'},\qquad qJ_{x,F}q^{-1}=J_{y,F'},
\]
where \(J_{x,F}\) is the setwise stabilizer of the cooriented oriented face.  Validity means that, after the fixed twelve-cell expansion, the induced \(S,R,X,Y\)-maps are total permutations satisfying the ambient Picard relations and acting transitively.  Theorem~\ref{thm:farey-reconstruction} shows that a valid marked symbol determines the pointed finite \(G\)-set and hence the corresponding finite-index subgroup.  Conversely every marked finite-index subgroup yields such a valid symbol.  Different choices may give different combinatorial symbols for the same subgroup; equivalence is therefore taken at the level of the reconstructed pointed or unpointed \(G\)-set.

\paragraph{Theorem B: torsion-free Farey octahedral domains.}
If \(H\leq G\) is torsion-free of index \(n\), then
\[
12\mid n,
\qquad
|H\backslash G/K|=\frac n{12}.
\]
Theorem~\ref{thm:torsionfree-farey-domain} constructs a connected polyhedral fundamental domain consisting of exactly \(n/12\) complete Farey octahedra.  Every fundamental domain that is a union of complete Farey octahedra has this same number of octahedra.  A spanning-tree choice gives at most
\[
\frac n4+1
\]
paired exposed triangular faces, and the corresponding side-pairing transformations generate \(H\).  No minimality of this generating set, nor minimality among arbitrary hyperbolic fundamental polyhedra, is asserted.

\paragraph{Theorem C: Picard Farey symbols and the Gaussian Steinberg presentation.}
Put \(F=\QQ(i)\) and
\[
\Delta_0(F)=\ker\!\left(\ZZ[\PP^1(F)]\xrightarrow{\deg}\ZZ\right).
\]
The rank-two Steinberg module is canonically \(\Delta_0(F)\).  The integral Gaussian presentation has the form
\[
C_2^{\mathrm{id}}\longrightarrow C_1^{\mathrm{id}}
\longrightarrow \Delta_0(F)\longrightarrow0,
\tag{1.2}\label{eq:intro-manin}
\]
where the generators are oriented Gaussian unimodular edges and the relations are boundaries of the Gaussian ideal triangles.  Section~\ref{sec:modular-symbols} identifies these cells with the edges and triangular faces of the Farey octahedra used here.  Theorem~\ref{thm:picard-modsym} then shows that the octahedral occurrences, local groups and face transports determine the finite edge/face orbit presentation directly, without first expanding each octahedron into its twelve Picard sectors.  For every left \(H\)-module \(V\), this yields
\[
\operatorname{Symb}_H(V)=
\operatorname{Hom}_{\ZZ[H]}(\Delta_0(F),V)
\]
as the kernel of an explicit finite relation map.

\begin{figure}[htbp]
\centering
\begin{tikzpicture}[node distance=1.05cm and 1.05cm,>=Latex,scale=.88,transform shape]
  \node[draw,rounded corners,align=center,minimum width=3.2cm,minimum height=.9cm] (f) {Picard Farey symbol\\$\{\OO_x,J_x,\text{face transports}\}$};
  \node[draw,rounded corners,align=center,right=of f,minimum width=3.0cm,minimum height=.9cm] (p) {finite Picard-cell action\\$\bigsqcup J_x\backslash K$};
  \node[draw,rounded corners,align=center,right=of p,minimum width=2.7cm,minimum height=.9cm] (h) {subgroup $H$\\and presentations};
  \node[draw,rounded corners,align=center,below=1.15cm of f,minimum width=3.25cm,minimum height=.9cm] (m) {Gaussian ideal edge/face\\orbit presentation};
  \node[draw,rounded corners,align=center,right=of m,minimum width=3.0cm,minimum height=.9cm] (st) {Steinberg module\\and modular symbols};
  \draw[-{Latex[length=2.7mm]},thick] (f)--node[above] {Theorem A} (p);
  \draw[-{Latex[length=2.7mm]},thick] (p)--(h);
  \draw[-{Latex[length=2.7mm]},thick] (f)--node[left] {Theorem C} (m);
  \draw[-{Latex[length=2.7mm]},thick] (m)--(st);
\end{tikzpicture}
\caption{Two uses of a Picard Farey symbol.  The fixed twelve-cell subdivision reconstructs the finite Picard-cell action, while the ideal octahedral one- and two-skeleton passes directly to the Gaussian Steinberg presentation.}
\label{fig:intro-flow}
\end{figure}

\subsection{Relation with previous work and the scope of the results}\label{sec:intro-literature}
The construction uses several established geometric and arithmetic theories, so it is useful to separate the Picard-specific statements from their ingredients.

\paragraph{Lee and low-index Picard subgroups.}
Brunner--Frame--Lee--Wielenberg classify the torsion-free Picard subgroups of indices twelve and twenty-four \cite{BrunnerFrameLeeWielenberg1984}.  Lee constructs subgroup fundamental polyhedra from finite coset actions and reads presentations from exposed side pairings and edge cycles \cite{Lee1984}.  We use this as background.  The contribution of Section~\ref{sec:torsionfree} is the uniform consequence of the Farey-octahedral quotient: for a torsion-free subgroup the number of complete octahedra is forced to be \([G:H]/12\), and a spanning tree gives an explicit boundary-face bound.  The spanning-tree construction itself is standard in spirit.

\paragraph{Mendoza--Fl\"oge, Swan, Yasaki and Page.}
Mendoza and Fl\"oge construct equivariant two-dimensional retracts for Bianchi groups, and Swan, Yasaki, Rahm--Fuchs and Page provide complementary fundamental-domain, Vorono\"i and computational frameworks \cite{Mendoza1980,Floege1983,Swan1971,Yasaki2010,RahmFuchs2011,Page2015}.  The present paper does not introduce another general Bianchi cell complex.  Its Picard-specific object is a coarser Farey-octahedral encoding of the standard Picard-cell action: Gaussian-rational octahedral occurrences, their local groups, and ordered face transports.  The theorem is that this finite decoration suffices for reconstruction.  No general claim of computational superiority over the established frameworks is made.

\paragraph{Schmidt and Hockman.}
Schmidt's Farey triangles and quadrangles provide an early arithmetic model for complex Farey geometry \cite{Schmidt1967}.  Hockman develops the Gaussian Farey-neighbour relation, the octahedral tessellation, its dual graph and geodesic Gaussian continued fractions \cite{Hockman2019,Hockman2020a,Hockman2020b}.  Yasaki obtains closely related ideal polyhedra from reduction of binary Hermitian forms \cite{Yasaki2010}; Nakada--Natsui--Thuswaldner and Mennen develop related complex Farey structures and expansions \cite{NakadaNatsuiThuswaldner2026,Mennen2026}.  These works provide the ambient arithmetic geometry.  The subgroup-dependent information in the present construction is carried by the local subgroups and the labelled face transports.

\paragraph{Cremona, Gunnells and Steinberg modules.}
Cremona develops the Gaussian ideal tessellation, Euclidean continued fractions, modular symbols and Hecke calculations \cite{CremonaThesis,Cremona1984,Cremona1987}.  Gunnells places modular symbols for \(\QQ\)-rank-one groups in a Vorono\"i framework \cite{Gunnells1999}, and Cremona--Aran\'es treat congruence subgroups, cusps and Manin symbols over number fields \cite{CremonaAranes2014}.  Kupers--Miller--Patzt--Wilson prove the integral generalized Bykovskii presentation of Steinberg modules for the Gaussian and Eisenstein integers \cite{KMPW}.  Section~\ref{sec:modular-symbols} uses these results as established background.  Its Picard-specific point is that the Gaussian ideal edges and triangles in the Steinberg presentation are already visible on the decorated Farey octahedra, so the corresponding finite orbit presentation can be formed directly from the symbol.

\paragraph{Classical Farey symbols.}
Kulkarni's special polygons and Farey symbols retain a cyclic rational boundary together with even, odd and free interval data \cite{Kulkarni1991}.  Belabas--Bernardi--Perrin-Riou give a modern polygonal formulation, while Bernardi--Perrin-Riou use Farey symbols in intersection and Petersson-product calculations \cite{BelabasBernardiPerrinRiou2020,BernardiPerrinRiou2020}.  The Picard construction is analogous at the level of organization and reconstruction, but not literally: a sphere of Gaussian-rational faces replaces the cyclic boundary necklace, and no analogue of Kulkarni's admissibility or minimum-side theorem is proved here.

\subsection{Ideas of proof and organization}\label{sec:intro-proof}
The reconstruction theorem has two ingredients.  First, Proposition~\ref{prop:twelve-cells} gives an exact twelve-cell subdivision of the standard Farey octahedron, with regular action of its stabilizer \(K\).  Second, the cells lying over an octahedral orbit \(x=HgK\) are \(J_x\backslash K\).  Boundary transports are recorded on outward-cooriented octahedral faces; the equality
\[
qJ_{x,F}q^{-1}=J_{y,F'}
\]
is exactly the equivariance needed for the transport to descend to the quotient fibres.  The fixed Picard presentation then tests whether the resulting four transitions define a transitive finite \(G\)-set.

For torsion-free subgroups, all octahedral local groups vanish.  The quotient dual graph therefore has \([G:H]/12\) vertices, and a lifted spanning tree chooses one complete octahedron from each orbit.  This gives the connected polyhedral fundamental domain and the exposed-face bound of Theorem~\ref{thm:torsionfree-farey-domain}.  Additional Poincar\'e polyhedron conditions, when needed for a side-pairing presentation from that particular union, are kept separate.

The modular-symbol statement starts instead from the algebraic rank-two Steinberg module.  The generalized Bykovskii presentation over \(\ZZ[i]\) gives an integral edge--triangle presentation, and Cremona's and Gunnells' descriptions identify those edges and triangles with the Gaussian ideal octahedral tessellation.  The local groups and face transports of the Picard Farey symbol therefore give the finite orbit/stabilizer boundary matrix directly on the octahedral two-skeleton.

The paper is organized accordingly.  Section~\ref{sec:prelim} records the background used later.  Section~\ref{sec:polyhedral} collects the geometric subgroup tools.  Section~\ref{sec:picard-geometry} fixes the standard Picard cell and its relation with the Gaussian Farey octahedron.  Section~\ref{sec:picard-farey} defines valid Picard Farey symbols and proves reconstruction.  Section~\ref{sec:invariants} separates information visible directly in the compact symbol from information obtained after finite reconstruction.  Section~\ref{sec:torsionfree} treats torsion-free whole-octahedron domains.  Section~\ref{sec:modular-symbols} develops the Steinberg/modular-symbol comparison.  Section~\ref{sec:open} records the remaining questions.  The appendices contain the explicit examples and finite tables used in the main text.

\section{Preliminaries}\label{sec:prelim}

\subsection{Bianchi groups and hyperbolic three-space}
Let \(K=\QQ(\sqrt{-d})\) be an imaginary quadratic field with ring of integers \(\mathcal O_K\).  The Bianchi group
\[
   G_K=\PSL_2(\mathcal O_K)
\]
acts on the upper-half-space model
\[
   \HH^3=\{(z,t):z\in\mathbf C,\ t>0\}
\]
by the usual extension of M\"obius transformations.  The action is properly discontinuous and has finite covolume.  Ideal vertices lie in \(\partial\HH^3=\mathbf C\cup\{\infty\}\), and the parabolic fixed points are \(\PP^1(K)\).  We work mainly with
\[
   G=\PSL_2(\ZZ[i]).
\]

The Euclidean Bianchi groups admit particularly concrete presentations and decompositions; see, for example, Swan's presentation theory \cite{Swan1971}.  For the Picard group there is also a classical amalgam description.  Fine proves \cite[Theorem~1]{Fine1976} that
\[
G\cong \bigl(S_3*_{C_3}A_4\bigr)
   *_{\PSL_2(\ZZ)}
   \bigl(S_3*_{C_2}(C_2\times C_2)\bigr),
\]
where the amalgamated subgroup is the embedded modular group $\PSL_2(\ZZ)\cong C_2*C_3$.  Related polygonal-product descriptions underlie the small-index subgroup classifications of Brunner--Frame--Lee--Wielenberg \cite{BrunnerFrameLeeWielenberg1984}.  We do not use this amalgam as an input to the reconstruction theorem, but it explains why the modular group remains visible inside the Picard group and why the finite stabilizers $C_2\times C_2$, $S_3$ and $A_4$ recur in the quotient cell structure below.

For an equivariant cell structure we shall always pass, when necessary, to a compatible subdivision on which the action has no inversions.  The barycentric subdivision has this property: its vertices are barycentres of cells of distinct dimensions in a nested chain, so an element preserving a subdivided simplex setwise preserves each vertex and hence fixes the simplex pointwise.  In the Picard calculations below we use the explicit fixed subdivision of Proposition~\ref{prop:twelve-cells}; its complete-flag incidences are chosen compatibly with the same no-inversions convention.  This distinction between an abstract barycentric refinement and the concrete twelve-cell subdivision will be maintained throughout.

\subsection{Poincar\'e polyhedra}
A fundamental polyhedron for a discrete subgroup \(H\leq\Isom^+(\HH^3)\) is a finite polyhedral region \(P\) such that the \(H\)-translates cover \(\HH^3\) and distinct translates have disjoint interiors.  Boundary faces occur in side pairs.  Following a face-pairing map around an edge gives an edge cycle and a cycle transformation; the sum of the corresponding dihedral angles is controlled by the order of that transformation.  At an ideal vertex, a sufficiently small horosphere gives a Euclidean polygonal link, and the induced side identifications determine the cusp cross-section.  We use the standard forms of Poincar\'e's theorem in \cite{EpsteinPetronio1994,JespersKieferDelRio2015}.

Two distinctions will be used repeatedly.  First, a three-dimensional fundamental polyhedron and a two-dimensional equivariant retract encode related but different information: collapsing the latter does not automatically produce a new Poincar\'e polyhedron.  Second, a proposed merger of adjacent boundary faces is legitimate only when a single side-pairing transformation extends across their union and the resulting polyhedron satisfies the relevant Poincar\'e hypotheses.  These points become important when comparing the polyhedral picture with maximal-tree presentations.

\begin{figure}[htbp]
\centering
\begin{tikzpicture}[>=Latex,scale=.88,transform shape]
\begin{scope}[xshift=-4.4cm]
  \coordinate (top) at (0,2.25); \coordinate (bot) at (0,-2.05);
  \coordinate (p1) at (-1.65,.15); \coordinate (p2) at (.05,1.0); \coordinate (p3) at (1.65,.15);
  \draw[thick] (p1)--(p2)--(p3)--cycle;
  \foreach \v in {p1,p2,p3}{\draw[thick] (top)--(\v); \draw[thick] (bot)--(\v);}
  \draw[dashed] (top)--(bot);
  \fill (top) circle (1.4pt) node[above] {$\infty$};
  \fill (bot) circle (1.4pt) node[below] {$0$};
  \fill (p1) circle (1.4pt) node[left] {$P_1$};
  \fill (p2) circle (1.4pt) node[above right] {$P_2$};
  \fill (p3) circle (1.4pt) node[right] {$P_3$};
  \node[draw,rounded corners,fill=white,inner sep=2pt] at (0,-2.85) {standard Picard cell $P_0$};
\end{scope}
\draw[-{Latex[length=3mm]},very thick] (-1.75,0)--(1.15,0)
 node[midway,above,align=center] {$12$ placements\\$kP_0$, $k\in K$};
\begin{scope}[xshift=4.25cm]
  \coordinate (N) at (0,2.55); \coordinate (S) at (0,-2.05);
  \coordinate (L) at (-1.75,.35); \coordinate (M) at (-.55,-.42);
  \coordinate (R) at (1.75,.35); \coordinate (U) at (.55,1.15);
  \draw[thick] (L)--(M)--(R)--(U)--cycle;
  \draw[thick] (N)--(L) (N)--(M) (N)--(R) (N)--(U);
  \draw[thick] (S)--(L) (S)--(M) (S)--(R); \draw[dashed] (S)--(U);
  \fill (N) circle (1.4pt) node[above] {$\infty$};
  \fill (S) circle (1.4pt) node[below] {$w$};
  \fill (L) circle (1.4pt) node[left] {$0$};
  \fill (M) circle (1.4pt) node[below left] {$1$};
  \fill (R) circle (1.4pt) node[right] {$1+i$};
  \fill (U) circle (1.4pt) node[above right] {$i$};
  \node[draw,rounded corners,fill=white,inner sep=2pt] at (0,-2.85) {one-octahedron domain $\mathcal O$};
\end{scope}
\end{tikzpicture}
\caption{From the classical Picard fundamental cell to the ideal octahedral carrier.  The left-hand drawing is a combinatorial projection of the five-vertex Picard cell used in Subsection~\ref{subsec:prelim-picard}.  The twelve placements $kP_0$, $k\in K=\operatorname{Stab}_G(\mathcal O)$, fill the Gaussian Farey octahedron on the right; the exact placements are listed later in Table~\ref{tab:twelve-placement}.  Lee shows that this ideal octahedron is a common fundamental domain for his torsion-free index-twelve Picard subgroups and gives both a three-dimensional sketch and a spread-out face diagram \cite[pp.~184--186, Figs.~4--7]{Lee1984}.  The drawings here record incidence rather than Euclidean metric geometry.}
\label{fig:picard-cell-to-octahedron}
\end{figure}

\subsection{Lee's coset construction and quotient complexes}
Lee's geometric subgroup method starts with a Poincar\'e fundamental polyhedron \(P_0\) for a discrete group \(G\) and a finite right-coset action \(H\backslash G\) \cite{Lee1984}.  Representatives can be chosen so that
\[
   P=\bigcup_i g_iP_0
\]
is connected.  If an exposed face of \(g_iP_0\) is carried by an ambient side map \(q\) to a face of \(g_iqP_0\), and \(Hg_iq=Hg_j\), then
\[
   g_iq=h g_j\qquad(h\in H),
\]
and \(h^{-1}\) pairs the corresponding exposed faces.  This provides a geometric counterpart to Reidemeister--Schreier rewriting.

The two-dimensional quotient information may be organized as a complex of groups.  After removing inversions, cells carry their stabilizers; inclusions of stabilizers along incidences are conjugated by transport elements when different chosen lifts are used.  Choosing a maximal tree in the quotient one-skeleton kills the tree transports but does not remove the attaching relations of the quotient two-cells.  This is the fundamental reason the Bianchi situation differs from the Bass--Serre tree for \(C_2*C_3\).

\subsection{The standard Picard cell and ambient presentation}\label{subsec:prelim-picard}
We use
\[
S=\begin{pmatrix}0&-1\\1&0\end{pmatrix},\quad
R=\begin{pmatrix}0&i\\i&0\end{pmatrix},\quad
T=\begin{pmatrix}1&1\\0&1\end{pmatrix},\quad
U=\begin{pmatrix}1&i\\0&1\end{pmatrix},
\]
\[
X=TS=\begin{pmatrix}1&-1\\1&0\end{pmatrix},\qquad
Y=UR=\begin{pmatrix}-1&i\\i&0\end{pmatrix}.
\]
A standard Picard fundamental cell \(P_0\) has ideal vertices \(0,\infty\) and finite vertices
\[
 P_1=\left(\frac12,0,\frac{\sqrt3}{2}\right),\qquad
 P_2=\left(\frac12,\frac12,\frac1{\sqrt2}\right),\qquad
 P_3=\left(0,\frac12,\frac{\sqrt3}{2}\right).
\]
The two vertical faces through \(0\infty\) are self-paired by \(S\) and \(R\), while \(X\) and \(Y\) pair the remaining triangular faces.  Subdivision through \(j=(0,0,1)\) removes the inversions.  The quotient is the square
\[
  j\xrightarrow{R}P_1\xrightarrow{X}P_2\xrightarrow{Y}P_3\xrightarrow{S}j.
\]
The vertex groups are \(C_2\times C_2,S_3,A_4,S_3\), and the edge groups are \(C_2,C_3,C_3,C_2\).  The corresponding ambient presentation is
\[
G=\langle S,R,X,Y\mid
S^2=R^2=(RS)^2=X^3=(RX)^2=Y^3=(X^{-1}Y)^2=(SY)^2=1\rangle.
\tag{2.1}\label{eq:picard-presentation}
\]
The exact edge cycles and cusp group are recorded in Appendix~\ref{app:picard}.  The enlargement from this one Picard cell to the one-octahedron index-twelve domains should be kept in view: Lee's $Q=\bigcup_{r=1}^{12}a_rP_0$ is the ideal octahedron displayed in Figure~\ref{fig:picard-cell-to-octahedron} \cite[pp.~184--186]{Lee1984}.  In Section~\ref{sec:picard-geometry} we fix the same twelve placements in the positive Gaussian normalization and prove their exact incidence statement; Appendix~\ref{app:octahedra} then records how different index-twelve groups decorate the common carrier.

\begin{figure}[htbp]
\centering
\begin{tikzpicture}[x=2.0cm,y=1.45cm,>=Latex]
 \node (j) at (0,1) {$j$};
 \node (p1) at (1,1) {$P_1$};
 \node (p2) at (1,0) {$P_2$};
 \node (p3) at (0,0) {$P_3$};
 \draw[->] (j)--node[above] {$R$}(p1);
 \draw[->] (p1)--node[right] {$X$}(p2);
 \draw[->] (p2)--node[below] {$Y$}(p3);
 \draw[->] (p3)--node[left] {$S$}(j);
 \node[anchor=east] at (-.12,1) {$C_2\times C_2$};
 \node[anchor=west] at (1.12,1) {$S_3$};
 \node[anchor=west] at (1.12,0) {$A_4$};
 \node[anchor=east] at (-.12,0) {$S_3$};
\end{tikzpicture}
\caption{The reduced quotient of the standard Picard cell after removing inversions.  This square is not a replacement for the three-dimensional polyhedron; it records the stabilizers and incidence relations from which the ambient presentation is read.}
\label{fig:picard-square-v8e}
\end{figure}

\section{Polyhedral domains and subgroup presentations}\label{sec:polyhedral}
This section gathers only the general geometric tools used later.  None is claimed as a new subgroup theorem; the point is to fix the precise form in which Lee's method, complete flags and quotient complexes enter the Picard construction.  Proposition~\ref{prop:lee} is Lee's construction, Proposition~\ref{prop:max-tree} is standard complex-of-groups theory, Proposition~\ref{prop:torsion} is a standard fixed-point consequence, and Corollary~\ref{cor:modular-specialization} is the classical Bass--Serre specialization.  Lemma~\ref{lem:flag-rigidity} and Remark~\ref{prop:face-merger} are elementary formulations included to isolate exactly the conventions needed later.

\begin{proposition}[Lee's finite-index polyhedron construction]\label{prop:lee}
Let \(G\) act discretely on \(\HH^3\) with a Poincar\'e fundamental polyhedron \(P_0\), and let \(H\leq G\) have finite index.  A connected choice of right-coset representatives \(g_i\) gives a fundamental polyhedron \(\bigcup_i g_iP_0\) for \(H\).  The right-coset action determines every exposed face identification.
\end{proposition}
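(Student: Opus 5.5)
The plan is to derive the proposition from the tiling $\{gP_0 : g\in G\}$ and the bijection between right cosets $Hg$ and $H$-orbits of tiles. Since $P_0$ is a Poincar\'e fundamental polyhedron for $G$, its translates cover $\HH^3$ with pairwise disjoint interiors. Assume for now that the action is free on tiles: if $gP_0=P_0$ then $g=1$. This holds for a genuine fundamental domain, because a nontrivial element cannot map the interior of $P_0$ into itself.

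Under this assumption, the $H$-orbits of tiles correspond bijectively to right cosets: the tile $gP_0$ lies in the orbit $H\cdot gP_0$, and $hgP_0=g'P_0$ holds exactly when $g'=hg$. Given any choice of representatives $g_1,\dots,g_n$ with $n=[G:H]$, the union $P=\bigcup_i g_iP_0$ therefore contains exactly one tile from each $H$-orbit. It follows that $HP=\HH^3$. Moreover, for $h\neq1$ the interiors of $hP$ and $P$ are disjoint, since $hg_iP_0=g_jP_0$ would force $Hg_i=Hg_j$ and hence $i=j$ and $h=1$. A common interior point of two tiles forces them to coincide, because the tiles have disjoint interiors and $P_0$ is the closure of its interior. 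Thus $P$ is a fundamental polyhedron, and it is finite because $n<\infty$.

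Next I would establish that a connected choice exists, by building the tree of choices inductively. Call two cosets adjacent if they are related by right multiplication by one of the side-pairing generators $q$ of $P_0$. Because these generators generate $G$, the Schreier graph of $H\backslash G$ on them is connected. Take a spanning tree rooted at $H\cdot1$ with $g_1=1$. When the tree edge from $Hg_i$ to $Hg_iq$ is traversed, set the new representative to $g_iq$. The tile $g_iqP_0$ shares with $g_iP_0$ the translate by $g_i$ of the face of $P_0$ paired by $q$. By induction the union is connected through codimension-one faces.

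For the face identifications, let $F$ be an exposed face of $g_iP_0$, meaning the tile across $F$ is not in $P$. That neighbouring tile is $g_iqP_0$ for the ambient side map $q$ attached to the corresponding face of $P_0$. Write $Hg_iq=Hg_j$, so $g_iq=hg_j$ with $h\in H$. Then $h^{-1}$ carries $g_iqP_0$ onto $g_jP_0\subset P$, and it carries the face $F=g_iP_0\cap g_iqP_0$ onto $g_jP_0\cap h^{-1}g_iP_0$, which is an exposed face of $P$. This face pairing is computed entirely from the right action of $q$ on $H\backslash G$ together with the chosen representatives, which proves the last claim.

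The main obstacle is making precise that faces of $P$ are unions of faces of tiles, so that exposed faces are exactly faces of tiles whose neighbours lie outside $P$. This is straightforward for a face-to-face tiling; it is the case for the Poincar\'e polyhedron $P_0$, whose translates meet face to face. I would also note that Poincar\'e-theorem hypotheses on $P$ are not claimed here, only the fundamental-domain property, in keeping with the remark following Proposition~\ref{prop:lee} on face mergers.
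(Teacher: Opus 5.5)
Your proposal is correct and takes essentially the same route as the paper's proof. Both arguments choose representatives inductively along the coset graph so that each new tile shares a face with the union built so far. Both get the covering and the disjointness of interiors from the correspondence between $H$-orbits of tiles and right cosets. Both read off each exposed-face pairing as $h^{-1}$, where $g_iq=hg_j$.

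You make explicit a few points the paper leaves implicit: that $G$ acts freely on tiles, which is what gives $hgP_0=g'P_0\Rightarrow g'=hg$, and that the side-pairing generators make the Schreier graph on $H\backslash G$ connected.
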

\begin{proof}
Choose representatives inductively along the finite coset graph so that each new cell shares a face with the union already chosen.  If an exposed face of \(g_iP_0\) is carried by an ambient side map \(q\) to the corresponding face of \(g_iqP_0\), choose \(j\) with \(Hg_iq=Hg_j\).  There is a unique \(h\in H\) such that \(g_iq=hg_j\); hence \(h^{-1}\) pairs the two exposed faces.  Every translate of \(P_0\) lies in an \(H\)-translate of the chosen union, so these translates cover \(\HH^3\).  If two interiors met, two distinct \(G\)-translates of the interior of \(P_0\) would meet, which is impossible.  Thus the union is a fundamental polyhedron.  This is Lee's construction \cite{Lee1984}.
\end{proof}

\begin{remark}[Lee's Picard example]\label{rem:lee-example}
For the Picard group, Lee carries out Proposition~\ref{prop:lee} explicitly.  For a torsion-free subgroup of index twelve he chooses twelve right-coset representatives $a_1,\ldots,a_{12}$ and forms
\[
Q=\bigcup_{r=1}^{12}a_rP_0.
\]
He shows that $Q$ is an ideal octahedron with congruent triangular faces, gives a three-dimensional sketch of the twelve-cell union, and then a spread-out face diagram from which the subgroup pairings are read \cite[pp.~184--190, Figs.~4--10]{Lee1984}.  Figure~\ref{fig:picard-cell-to-octahedron} records only the common carrier; Appendix~\ref{app:octahedra} compares the different decorations on this carrier.  Thus Proposition~\ref{prop:lee} should be read as the general mechanism behind a concrete geometry already visible in Lee's Picard examples.
\end{remark}

A \emph{complete oriented flag} in a three-dimensional cellulation is a nested chain
\[
 v\subset e\subset F\subset P,
\]
consisting of a vertex, edge, two-face and three-cell, together with an orientation of the face and the coorientation pointing into the chosen incident three-cell.  For an ideal triangular face the cyclic order of its vertices records the face orientation.  This is the information used below to distinguish the two incident sides of a face; in mixed finite/ideal flags the lower-dimensional cells remove any ambiguity not visible from ideal vertices alone.

\begin{lemma}[Complete-flag rigidity]\label{lem:flag-rigidity}
If \(g_1,g_2\in G_K\) carry the same complete oriented flag of a no-inversions cellulation to the same target flag, then \(g_1=g_2\).
\end{lemma}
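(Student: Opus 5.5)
Put \(g=g_2^{-1}g_1\).  Since both elements carry the flag \(v\subset e\subset F\subset P\) to the same target flag, \(g\) preserves each cell of the flag setwise and preserves both the orientation of \(F\) and the coorientation into \(P\).  The claim is therefore equivalent to: an element of \(G_K\) stabilizing a complete oriented flag is the identity of \(\PSL_2(\mathcal O_K)\).  The proof goes in three steps, from a local rigid motion to the identity in \(G_K\).

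First, use the no-inversions hypothesis.  An element stabilizing a cell setwise fixes it pointwise, so \(g\) fixes \(v\), \(e\) and \(F\) pointwise.  (If one prefers, one passes to the barycentric subdivision as in Section~\ref{sec:prelim}.  There, preserving the nested chain forces \(g\) to fix the barycentres of \(e\), \(F\) and \(P\), together with \(v\).)

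Second, pass from fixed points to a rigid motion.  The fixed set of \(g\) contains a two-dimensional face \(F\), or at least three non-collinear points spanning a totally geodesic plane \(\Pi\).  An orientation-preserving isometry of \(\HH^3\) fixing a plane pointwise is either the identity or the reflection in \(\Pi\).  The reflection reverses orientation, so it does not lie in \(\Isom^+(\HH^3)\supset G_K\).  More elementarily, a non-identity orientation-preserving isometry fixing a point pointwise-fixes at most a geodesic line.  Either way, the coorientation condition alone already excludes the rotation by \(\pi\) about a line in \(F\) that swaps the two sides of \(F\).  So \(g\) acts as the identity on \(\HH^3\).

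Third, note that \(\PSL_2(\mathcal O_K)\) acts faithfully on \(\HH^3\): only \(\pm I\) act trivially, and these are identified in \(\PSL_2\).  Hence \(g=1\) and \(g_1=g_2\).

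The main obstacle is flags whose lower cells are ideal, where ``fixing pointwise'' must be read on \(\overline{\HH^3}\).  If \(F\) is an ideal triangle with all vertices at infinity, pointwise fixation still gives infinitely many interior points of \(F\), so the argument is unchanged.  The only delicate case is when the no-inversions convention controls only the cells and not a real face, for instance if the cellulation is merely combinatorial.  In that case the plan is to use the orientation of \(F\) and the coorientation to rule out, respectively, the order-two and order-three rotations about axes through \(F\), and the half-turn swapping the two sides of \(F\).
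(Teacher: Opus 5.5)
Your proof is correct and is essentially the paper's argument. The paper also passes to $g_2^{-1}g_1$ and invokes the no-inversions convention, but applies it to the top cell $P$, so the element fixes a nonempty open set and is the identity. You apply it to $F$ instead and exclude the reflection in its plane by orientation-preservation; your remark that $\PSL_2(\mathcal O_K)$ acts faithfully fills a step the paper leaves implicit. Your final paragraph on ``merely combinatorial'' cellulations is unnecessary, since applying no-inversions to $P$ itself already gives an open fixed set.
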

\begin{proof}
The element \(g_2^{-1}g_1\) preserves each cell of the source flag and preserves the chosen side of the top-dimensional face.  The no-inversions convention therefore makes it fix the top-dimensional cell pointwise.  An orientation-preserving isometry of \(\HH^3\) which is the identity on a nonempty open subset is the identity.  Hence \(g_1=g_2\).
\end{proof}

\begin{proposition}[Maximal-tree presentation]\label{prop:max-tree}
Let \(H\) act without inversions on a simply connected two-dimensional cell complex \(X\) with finite quotient.  Choose one lift of every quotient cell and a maximal tree in the quotient one-skeleton.  Then \(H\) has the standard complex-of-groups presentation: generators are the local vertex groups and the transport elements on non-tree edges; relations are the local group relations, the edge inclusions, and the attaching relations of the quotient two-cells.
\end{proposition}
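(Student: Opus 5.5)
The plan is the standard complex-of-groups (Haefliger/Brown) argument, specialised to a simply connected two-complex. I would build a group \(\widehat H\) abstractly from the stated generators and relations, map it to \(H\), and then show this map is an isomorphism.

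\textbf{Setup.} Fix the chosen lifts \(\tilde v\), \(\tilde e\), \(\tilde f\) of the quotient cells \(v,e,f\). Because there are no inversions, each lifted edge \(\tilde e\) has well-defined endpoints, and each endpoint is an \(H\)-translate of a chosen vertex lift. Record these translations as transport elements \(h_{e}\in H\). I would make the choices adapted to the maximal tree \(T\): lift \(T\) to a subtree \(\tilde T\subset X\), by induction along \(T\) starting from a base vertex. Then every tree edge joins chosen vertex lifts directly, so its transport element is \(1\). This is exactly why tree transports disappear from the presentation.

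\textbf{The map and surjectivity.} Define \(\widehat H\) as the free product of the vertex stabilizers \(H_{\tilde v}\) with free generators \(t_e\), one for each non-tree edge. Impose three families of relations. First, the edge-inclusion relations \(t_e\,\iota_1(a)\,t_e^{-1}=\iota_2(a)\) for \(a\in H_{\tilde e}\), with \(t_e=1\) on tree edges. Second, the local group relations. Third, for each quotient two-cell \(f\), the word obtained by walking around \(\partial\tilde f\). This word records the transports and the vertex-group elements needed to return each boundary edge to its chosen lift. The obvious homomorphism \(\phi:\widehat H\to H\) sends \(t_e\mapsto h_e\) and includes each \(H_{\tilde v}\).

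For surjectivity I would use connectedness of \(X\). Let \(H'\) be the image of \(\phi\). The translates \(H'\tilde T\) together with the lifted non-tree edges give a connected subcomplex \(Y\). By construction \(Y\) is stable under \(H'\). Any \(h\in H\) with \(h\tilde T\) adjacent to \(Y\) differs from an element of \(H'\) by a vertex-group element or a transport. Since \(X\) is connected, this forces \(H'=H\).

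\textbf{Injectivity.} Here I would construct the universal development. Build a two-complex \(\widehat X=(\widehat H\times\text{cells})/\sim\), gluing in the usual way by the cosets \(\widehat H/H_{\tilde\sigma}\). The map \(\phi\) induces an equivariant cellular map \(\widehat X\to X\). The key steps are as follows.
\begin{enumerate}
\item Show that \(\widehat X\to X\) is a covering, i.e. a local isomorphism on stars. This uses that local groups and edge inclusions are imposed exactly, so links in \(\widehat X\) match links in \(X\).
\item Show that \(\widehat X\) is connected; this uses the generation argument in \(\widehat H\).
\item Conclude: since \(X\) is simply connected, the covering \(\widehat X\to X\) is an isomorphism, so \(\ker\phi\) acts trivially. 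Because \(\ker\phi\) meets each vertex group trivially, and the action of \(\widehat H\) on \(\widehat X\) has vertex stabilizers exactly the \(H_{\tilde v}\), the kernel is trivial.
\end{enumerate}

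\textbf{Main obstacle.} The hardest part is showing that \(\widehat X\) is a local isomorphism at vertices. Precisely, one must check that the two-cell attaching relations are exactly what is needed to make each vertex link of \(\widehat X\) map isomorphically onto the link in \(X\). The quotient two-cells are where the genuinely two-dimensional relations live, and this is also the step that fails for a graph of groups only via the absence of two-cells. Rather than redoing it, I would cite Haefliger's developability theorem, or equivalently Brown's theorem on presentations of groups acting on simply connected complexes. With that citation the proposition reduces to verifying that the hypotheses hold: simple connectivity, no inversions, and a finite quotient (finiteness only guarantees a finite presentation).
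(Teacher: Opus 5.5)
Your proposal is correct and follows essentially the same route as the paper: lift the maximal tree so that tree transports are trivial, read off the vertex-group, edge-inclusion and two-cell attaching relations, and then appeal to the standard complex-of-groups theorem for an action on a simply connected complex (the paper cites Bridson--Haefliger III.C.3 via developability, while you cite Haefliger or Brown). Your surjectivity-by-connectedness and injectivity-by-universal-development outline just unpacks that cited theorem, and Brown's version has the small advantage of being phrased directly in terms of the vertices, edges and two-cells of \(X\) rather than the barycentric scwol.
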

\begin{proof}
Choose lifts coherently along the maximal tree.  Every tree transport is then the identity.  Moving across a non-tree edge requires one transport element, and the local vertex groups account for stabilizers of the chosen lifts.  The two descriptions of an edge stabilizer inside its endpoint groups give the edge-inclusion relations.  Finally, traversing the attaching map of a lifted two-cell returns to its initial lift, giving the two-cell relation.  Since the complex of groups here is induced by an actual action of \(H\) on the simply connected complex \(X\), it is developable by construction, with development \(X\).  The maximal-tree presentation is the standard one for a developable complex of groups; see Bridson--Haefliger \cite[Chapter~III.C.3, especially Proposition~3.7]{BridsonHaefliger1999}.  Hence these relations generate all relations among the deck transformations.
\end{proof}

\begin{remark}[Coarsening compatible boundary faces]\label{prop:face-merger}
Suppose adjacent boundary faces \(F_1,F_2\) of a polyhedral fundamental domain have a union \(F=F_1\cup F_2\) which is a single geometric face, and suppose that a single element \(g\in G\) restricts to the prescribed side pairing on both \(F_1\) and \(F_2\) and maps \(F\) to the corresponding paired union.  Then the common edge is artificial for this coarser side identification, and one may replace \(F_1,F_2\) by \(F\) without changing the underlying group action.  If both the original and coarsened domains satisfy the hypotheses of the chosen form of the Poincar\'e polyhedron theorem, the resulting presentations describe the same subgroup and are related by Tietze transformations.

We shall not need a general face-merger theorem below.  In particular, agreement of two side-pairing maps merely along the common geodesic is not by itself sufficient to justify a merger.
\end{remark}

\begin{proposition}[Torsion from cell stabilizers]\label{prop:torsion}
Let \(H\) act cellularly and without inversions on an \(H\)-equivariant cellulation of \(\HH^3\).  Then \(H\) is torsion-free if and only if every cell stabilizer is trivial.  For a finite quotient it is enough to test one representative of each cell orbit.
\end{proposition}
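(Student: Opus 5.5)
The plan is to prove the two directions of the equivalence separately and then handle the finite-quotient reduction.

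\emph{Easy direction.} Suppose some cell \(\sigma\) has a nontrivial stabilizer \(h\neq1\) in \(H\). I will show \(h\) has finite order, so \(H\) has torsion. Since \(h\) preserves \(\sigma\) setwise and the action has no inversions, \(h\) fixes \(\sigma\) pointwise, and in particular fixes a point of \(\HH^3\). If \(\sigma\) is a finite cell this is immediate. If \(\sigma\) is an ideal cell of positive dimension, it still contains interior points of \(\HH^3\). So \(h\) lies in the stabilizer of a point of \(\HH^3\). Because \(H\le G\) acts discretely, that point stabilizer is finite, and \(h\) has finite order.

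\emph{Converse.} Suppose \(H\) contains an element \(h\neq1\) of finite order. I will produce a cell with nontrivial stabilizer. A finite-order orientation-preserving isometry of \(\HH^3\) is elliptic, so its fixed set is a nonempty geodesic line. More elementarily, the orbit of any point is finite, and its barycentre (circumcentre) is fixed. Choose a fixed point \(p\), and let \(\sigma\) be the unique open cell of the cellulation containing \(p\). Since \(h\) acts cellularly, \(h\sigma\) is a cell containing \(hp=p\). The open cells are disjoint, so \(h\sigma=\sigma\), and \(h\) is a nontrivial element of \(\Stab_H(\sigma)\). In fact \(h\) fixes \(\sigma\) pointwise, so Lemma~\ref{lem:flag-rigidity}-type rigidity does not interfere: a nontrivial \(h\) can fix a lower-dimensional cell.

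\emph{Finite quotient.} Stabilizers along an orbit are conjugate: \(\Stab_H(k\sigma)=k\,\Stab_H(\sigma)\,k^{-1}\). So triviality of the stabilizer is an orbit invariant, and it suffices to check one representative of each of the finitely many cell orbits.

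\emph{Main obstacle.} The delicate point is the treatment of ideal cells and ideal vertices. A cell stabilizer of an ideal vertex contains parabolic elements, so the statement must be read with cells meaning cells of the cellulation of \(\HH^3\) itself, with ideal vertices excluded, or with stabilizers of positive-dimensional cells only. I would state this convention explicitly. Then I would check that every cell used in the equivalence meets \(\HH^3\) in a nonempty set, on which the no-inversions property gives pointwise fixing and hence a fixed interior point. This fixed point is what allows discreteness to force finiteness of the stabilizer in the easy direction.
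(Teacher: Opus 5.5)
Your proof is correct and follows the paper's argument: the converse is exactly the paper's step that a finite subgroup fixes a point of the complete CAT(0) space $\HH^3$ (the circumcentre of an orbit), and therefore preserves the unique open cell containing that point. You are more careful than the paper in three places: you derive finiteness of a nontrivial cell stabilizer from the no-inversions convention and discreteness (the paper simply speaks of a ``nontrivial finite cell stabilizer''); you justify the one-representative-per-orbit reduction by conjugacy of stabilizers; and you note that ideal vertices must be excluded from the cells, since their stabilizers contain parabolic elements.
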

\begin{proof}
A nontrivial finite cell stabilizer contains torsion.  Conversely, a finite subgroup of \(\Isom(\HH^3)\) fixes a point of the complete CAT(0) space \(\HH^3\) \cite[Prop.~II.2.7]{BridsonHaefliger1999}.  Let \(\sigma\) be the unique open cell containing such a fixed point.  The finite subgroup preserves \(\sigma\), and the no-inversions convention gives a nontrivial stabilizer of \(\sigma\).  This proves the equivalence.
\end{proof}

\begin{corollary}[Bass--Serre specialization to the modular group]\label{cor:modular-specialization}
For \(\PSL_2(\ZZ)=C_2*C_3\), the no-inversions Farey tessellation collapses equivariantly to the Farey tree.  A maximal-tree presentation of a finite-index subgroup therefore gives its Kurosh decomposition
\[
   H\cong\Bigl(*C_2\Bigr)*\Bigl(*C_3\Bigr)*F_r.
\]
Kulkarni's special polygons geometrically realize the same decomposition through their even, odd and free side identifications.
\end{corollary}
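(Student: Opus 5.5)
The plan is to realize \(\PSL_2(\ZZ)=C_2*C_3\) as acting on a tree, then apply Proposition~\ref{prop:max-tree} to a finite-index subgroup, where a one-dimensional quotient leaves no two-cell relations. For the tree I take the classical Farey tessellation of \(\HH^2\) by ideal triangles with vertices in \(\PP^1(\QQ)\). Its dual graph has a vertex at the centre of each Farey triangle and a vertex at the midpoint of each Farey edge. This bipartite graph is the Farey tree \(\mathcal T\), the standard equivariant spine. I would then show that the tessellation deformation retracts \(\PSL_2(\ZZ)\)-equivariantly onto \(\mathcal T\): each ideal triangle retracts onto the tripod joining its centre to its three edge midpoints, by the usual straight-line retraction in the Klein model, and these retractions agree on shared edges. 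Since \(\HH^2\) is contractible, \(\mathcal T\) is a contractible graph, that is, a tree.

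The action on \(\mathcal T\) has no inversions, because \(\mathcal T\) is bipartite with the two vertex classes (triangle centres and edge midpoints) in different orbits. The quotient is a single edge. Its vertex stabilizers are \(\langle S\rangle\cong C_2\) at the midpoint of the edge \(0\infty\) and \(\langle X\rangle\cong C_3\) at the centre of the triangle \(0,1,\infty\). The edge stabilizer is trivial, since an element fixing an edge of \(\mathcal T\) fixes both a Farey edge and an adjacent triangle, and hence an open set.

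Now let \(H\) have finite index. It also acts on \(\mathcal T\) without inversions, with finite quotient graph \(H\backslash\mathcal T\). Proposition~\ref{prop:max-tree} applies with \(X=\mathcal T\): it is simply connected and has no two-cells, so there are no attaching relations. Each edge stabilizer in \(H\) is contained in a trivial ambient edge stabilizer, so the edge-inclusion relations are vacuous. Each vertex stabilizer is \(H\cap gC_2g^{-1}\) or \(H\cap gC_3g^{-1}\), hence trivial, \(C_2\) or \(C_3\). The presentation is therefore the free product of the nontrivial vertex groups with the free group on the \(r\) non-tree edges. Here \(r=1-\chi(H\backslash\mathcal T)\) is the first Betti number of the quotient graph. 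This gives the Kurosh decomposition \(H\cong(*C_2)*(*C_3)*F_r\).

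Finally I would identify this with Kulkarni's special polygons. The midpoints of Farey edges with nontrivial \(H\)-stabilizer are the fixed points of even side identifications. Triangle centres with \(C_3\) stabilizer correspond to odd identifications. Free side pairs correspond to the non-tree edges once a special polygon is chosen, as the union of the triangles lifted along a lift of a maximal tree in \(H\backslash\mathcal T\). This identification is the step I expect to need most care: checking that Kulkarni's free pairings are exactly the non-tree transports. I would cite \cite{Kulkarni1991} for the admissible-polygon side, and argue via Proposition~\ref{prop:lee} that the exposed-side pairings of the lifted tree union are precisely these transports.
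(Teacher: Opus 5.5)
Your derivation of the Kurosh decomposition follows the paper's proof. You retract the Farey tessellation onto the Farey tree, read off vertex stabilizers $C_2$, $C_3$ and trivial edge stabilizers, and apply Proposition~\ref{prop:max-tree} to a simply connected complex with no two-cells. Your added details are correct: bipartiteness gives no inversions, and $r$ is the first Betti number of $H\backslash\mathcal T$. For the edge stabilizer, the shortest justification is that it lies in a conjugate of $\langle S\rangle\cap\langle X\rangle$, which is trivial because $2$ and $3$ are coprime. Your phrase ``hence an open set'' tacitly uses a further fact: an orientation-preserving element that preserves a triangle and one of its sides setwise must fix all three ideal vertices.

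The step that fails as written is your model of the special polygon as the union of \emph{whole} Farey triangles along a lifted maximal tree. Consider a triangle whose centre has $H$-stabilizer $C_3$; it is a leaf of $H\backslash\mathcal T$. Its three sectors about the centre are permuted by that stabilizer, so the whole triangle covers its image in $H\backslash\HH^2$ three times, and the union is not a fundamental domain. Already for $H=\PSL_2(\ZZ)$ the lifted tree is the single edge from the centre $\rho$ of $(0,1,\infty)$ to $i$, while that triangle has area $\pi$ against covolume $\pi/3$.

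Such a triangle must be replaced by a sector. It is bounded by the Farey edge through which the tree enters the triangle, and by the two geodesics from the centre to that edge's endpoints. These two geodesic sides, paired by the order-three rotation about the centre, are exactly Kulkarni's odd side pair. For $\PSL_2(\ZZ)$ this gives the sector $0,\rho,\infty$ with even side $0\infty$.

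Equivalently, to invoke Proposition~\ref{prop:lee} you must take $P_0$ to be one third of a Farey triangle, which is a fundamental domain for $\PSL_2(\ZZ)$; a whole triangle is not. With this correction, your accounting of even sides ($C_2$-leaves) and free pairs (non-tree edges) goes through. The paper itself simply cites Kulkarni for this last identification, as you also propose to do.
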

\begin{proof}
The barycentrically subdivided Farey tessellation retracts to the Bass--Serre tree of \(C_2*C_3\).  Vertex stabilizers are trivial, \(C_2\) or \(C_3\), while edge stabilizers are trivial.  Proposition~\ref{prop:max-tree} therefore gives the displayed free product.  Kulkarni's correspondence between special polygons and Farey symbols records these finite factors and free side pairings on the boundary \cite{Kulkarni1991}.
\end{proof}

\section{The Picard cell and the Gaussian Farey octahedron}\label{sec:picard-geometry}
The octahedral carrier, Gaussian-rational adjacency and classical Picard geometry in Subsections~4.1--4.2 belong to the established literature of Swan, Schmidt, Cremona and Hockman \cite{Swan1971,Schmidt1967,Cremona1984,Hockman2019,Hockman2020a}.  What is specific to the present paper is the fixed positive normalization together with the exact twelve-placement incidence statement of Proposition~\ref{prop:twelve-cells}, which is the universal finite table used in the reconstruction theorem.
The general material above becomes especially concrete for \(\ZZ[i]\).  This section fixes the positive normalization used throughout the rest of the paper and explains geometrically how one Farey octahedron decomposes into twelve standard Picard cells.

\subsection{The standard octahedron and its stabilizer}
Set
\[
   \OO=\{\infty,0,1,i,1+i,w\},\qquad w=\frac{1+i}{2},
\]
and let \(K=\Stab_G(\OO)\).  We use the positive normalization compatible with the cell \(P_0\) from Subsection~\ref{subsec:prelim-picard}.  For comparison with the older Lee coordinates, put
\[
 L=\begin{pmatrix}1&-1\\0&1\end{pmatrix}=T^{-1}.
\]
Then \(L\) acts by \(z\mapsto z-1\), sends the positive octahedron to
\[
 L\OO=\{\infty,0,i,-1,-1+i,(-1+i)/2\},
\]
and transports the other normalization by
\[
 K_{-}=LKL^{-1},\qquad P_{0,-}=LP_0.
\]
Thus the octahedron, its stabilizer and the distinguished Picard cell are always translated together; no calculation below mixes the two normalizations.

For later explicit work it is convenient to use two elements
\[
 a=\begin{pmatrix}i&1\\ i&1-i\end{pmatrix},
 \qquad
 c=\begin{pmatrix}-1&1\\-1&0\end{pmatrix},
\]
which generate \(K\).  One exact ordering of its twelve elements is
\[
1,a,a^2,c^2,c,c^2a^2,ac,ca,a^2c^2,ac^2,c^2a,aca.
\tag{4.1}\label{eq:K-order}
\]
The exact word ordering is not intrinsic, but fixing it once is useful for comparing the examples.

\begin{figure}[htbp]
\centering
\begin{tikzpicture}[scale=1.03,>=Latex]
 \coordinate (N) at (0,3.45); \coordinate (C) at (0,-1.10);
 \coordinate (O) at (-2.20,1.02); \coordinate (One) at (-.72,.10);
 \coordinate (Q) at (2.20,1.02); \coordinate (I) at (.72,1.95);
 \draw[thick] (O)--(One)--(Q)--(I)--cycle;
 \draw[thick] (N)--(O) (N)--(One) (N)--(Q) (N)--(I);
 \draw[thick] (C)--(O) (C)--(One) (C)--(Q); \draw[dashed] (C)--(I);
 \fill (N) circle (1.4pt) node[above] {$\infty$};
 \fill (C) circle (1.4pt) node[below] {$w$};
 \fill (O) circle (1.4pt) node[left] {$0$};
 \fill (One) circle (1.4pt) node[below left] {$1$};
 \fill (Q) circle (1.4pt) node[right] {$1+i$};
 \fill (I) circle (1.4pt) node[above right] {$i$};
 \node[align=center] at (0,-1.72) {standard positive normalization};
\end{tikzpicture}
\caption{The standard Gaussian Farey octahedron \(\OO\).  Each of its eight ideal triangular faces is subdivided into six elementary triangles by the twelve Picard cells.}
\label{fig:farey-octahedron-expanded}
\end{figure}

\subsection{Gaussian rational vertices and Farey faces}\label{subsec:gaussian-rational}
Write
\[
   \QQ(i)_\infty=\PP^1(\QQ(i)).
\]
A point is represented by a reduced pair \((\alpha,\gamma)\in\ZZ[i]^2\), unique up to multiplication by a unit.  Following Schmidt and Hockman, two reduced Gaussian rationals are Farey neighbours when
\[
   \frac{\alpha}{\gamma}\sim\frac{\beta}{\delta}
   \quad\Longleftrightarrow\quad
   N(\alpha\delta-\beta\gamma)=1.
\]
Equivalently the two column vectors form a unimodular pair over \(\ZZ[i]\).  Hockman's Gaussian Farey graph has vertex set \(\QQ(i)_\infty\) and these neighbour pairs as edges; its hyperbolic realization is the one-skeleton of the Farey-octahedral tessellation.  Schmidt's earlier Farey triangles and quadrangles already make the same arithmetic point in terms of unimodular matrices over imaginary quadratic integer rings \cite{Schmidt1967,Hockman2020a}.

For the present paper it is enough to define an oriented Gaussian Farey octahedron to be a translate
\[
   \OO_g=g\OO,
   \qquad g\in G,
\]
recorded by the ordered six-tuple
\[
   \bigl(g\infty,g0,g1,gi,g(1+i),gw\bigr)\in\PP^1(\QQ(i))^6.
\tag{4.2}\label{eq:rational-octahedron}
\]
Every triangular face is therefore an ordered triple of Gaussian rationals.  This is the arithmetic information hidden by a notation such as ``an octahedral occurrence.''  We shall keep the rational coordinates visible in the definition of a Picard Farey symbol and use normalized occurrences only as a convenient compact form in proofs and calculations.

\begin{remark}[Why there is no Gaussian Farey sequence]\label{rem:no-sequence}
Kulkarni can place the vertices of a special polygon in cyclic order on \(\PP^1(\QQ)\subset\partial\HH^2\).  There is no analogous linear or cyclic order on \(\PP^1(\QQ(i))\subset\partial\HH^3\cong S^2\).  The faithful replacement is therefore not a sequence but a two-dimensional incidence object: Gaussian rational vertices, Farey edges and Farey triangular faces.  The distinction is already visible in Schmidt's passage from Farey intervals to Farey triangles and quadrangles.
\end{remark}

\subsection{The twelve-cell subdivision}
\begin{proposition}[The exact twelve-cell subdivision]\label{prop:twelve-cells}
Let
\[
 \mathcal O=\left\{\infty,0,1,i,1+i,w=\frac{1+i}{2}\right\}
 \subset \mathbf H^3
\]
be the positively normalized Gaussian Farey octahedron, let
$K=\operatorname{Stab}_G(\mathcal O)$, and let $P_0$ be the standard
Picard cell.  For the ordering
\[
 1,a,a^2,c^2,c,c^2a^2,ac,ca,a^2c^2,ac^2,c^2a,aca
\]
of $K$, the cells $P_k=kP_0$ are distinct, have pairwise disjoint
interiors, and satisfy
\[
 \mathcal O=\bigcup_{k\in K}P_k.\tag{4.3}\label{eq:twelve-cells}
\]
The action of $K$ on these cells is simply transitive.  In the fixed
no-inversions subdivision, the $96$ elementary facet occurrences comprise
$48$ internal occurrences, paired into $24$ Picard-face pairs, and $48$
boundary triangles.  Each of the eight ideal faces of $\mathcal O$ contains
exactly six of the boundary triangles.
\end{proposition}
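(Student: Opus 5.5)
The plan is to reduce everything to two facts: $|K|=12$ with $K$ acting on $\mathcal O$, and $P_0$ being a fundamental domain for $G$. I would first verify directly that $a$ and $c$ preserve the six-point set $\{\infty,0,1,i,1+i,w\}$. For instance, $c(z)=(-z+1)/(-z)=1-1/z$ permutes $\infty\to1\to0\to\infty$, sends $i\mapsto 1+i$, $1+i\mapsto w$ and $w\mapsto i$, so $c$ has order three. A similar check handles $a$. Next I would show that the listed twelve words are pairwise distinct elements of $\PSL_2(\ZZ[i])$ by comparing their induced permutations of the six vertices. The map $K\to \mathrm{Sym}(\mathcal O)$ is injective, because an element fixing three ideal points is trivial. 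So it suffices to show that these twelve permutations are distinct and closed under multiplication; this identifies $K\cong A_4$, the rotation group preserving the four-colouring of the faces induced by the Gaussian orientation. Finally, $|K|\le 12$ follows because $\Stab$ permutes the eight faces preserving the two face classes (the orientation-preserving octahedral group has order $24$, and the index-two condition comes from $\PSL_2(\ZZ[i])$ not containing the order-four rotation about the $0\infty$ axis, $z\mapsto iz$, which has determinant issue $\pm i$).

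Distinctness and disjointness then come for free. Since $P_0$ is a fundamental domain for $G$ (Subsection~\ref{subsec:prelim-picard}), distinct $g\in G$ give translates $gP_0$ with disjoint interiors. Hence the $P_k$, $k\in K$, are distinct with pairwise disjoint interiors, and $K$ acts simply transitively on $\{P_k\}$ by left multiplication.

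The covering equality \eqref{eq:twelve-cells} is the substantive step. I would argue by volume plus containment. First show $P_0\subset\mathcal O$: its vertices $0,\infty,P_1,P_2,P_3$ lie in the convex hull of $\mathcal O$. For example, $P_2=(\tfrac12,\tfrac12,1/\sqrt2)$ is the centre of the octahedron, equidistant from all six ideal vertices, while $P_1$ and $P_3$ are the barycentres of the ideal faces $(\infty,0,1)$ and $(\infty,0,i)$. By convexity $P_0\subset\mathcal O$, and hence $P_k\subset k\mathcal O=\mathcal O$. Then compare volumes: the regular ideal octahedron has volume $8\Lambda(\pi/4)\approx3.6638$, and the Picard covolume is $\tfrac{1}{3}\cdot$ that, i.e.\ $\mathrm{vol}(P_0)=\mathrm{vol}(\mathcal O)/12$ (the classical value $0.3053$). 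Twelve interior-disjoint closed cells of total volume $\mathrm{vol}(\mathcal O)$ inside the closed convex $\mathcal O$ must then fill it, since the complement would be open of measure zero. As an alternative that avoids the volume value, I would use Lee's computation (Remark~\ref{rem:lee-example}) transported by $L$, checking that $L^{-1}a_rL$ is exactly our list.

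The facet count is bookkeeping, and I expect it to be the main obstacle only in the sense of fixing conventions. After subdividing through $j$, each Picard cell has eight elementary facets: each of the four non-vertical faces and the two vertical faces, with the vertical faces split. This gives $12\times8=96$ occurrences. The occurrences lying in $\partial\mathcal O$ are those contained in ideal faces. For $P_0$ these are the four half-faces of the triangles through $\infty$ meeting $P_1$ and $P_3$, namely the subdivided pieces in the planes $\mathrm{Im}=0$ and $\mathrm{Re}=0$ and the two faces that lie on the octahedral faces. I would identify exactly four boundary facets of $P_0$, giving $48$ boundary triangles. The remaining $48$ occurrences are interior and glue in pairs across the ambient side pairings $S,R,X,Y$ conjugated within $K$, giving $24$ pairs. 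Since $K$ acts transitively on the eight faces with stabilizer $C_3$ of order three, each face receives $48/8=6$ triangles, namely the barycentric sextants around its centre. The delicate point is checking that the no-inversions split of the self-paired vertical faces gives precisely four boundary pieces per cell. I would verify this on $P_0$ alone and then propagate it by $K$-equivariance.
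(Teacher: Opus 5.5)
Your main line is correct, but it reaches the covering equality by a different route from the paper.

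\textbf{How the two proofs differ.} The paper works entirely from explicit finite data. It lists the finite vertices $p_1,p_2=C,p_3,j$ of all twelve cells and checks the eight inequalities $q_r\ge0$ on them. It tabulates, for every cell, the neighbour across $X_0,X_\infty,Y_0,Y_\infty$ and the octahedral face carrying the $R$- and $S$-halves, and checks that the six boundary triangles on each $F_r$ form a disc. It then closes topologically: $U=\bigcup_kP_k\subseteq\mathcal O$ has no relative boundary in the interior of $\mathcal O$, so $U=\mathcal O$.

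You get containment from $P_0\subset\mathcal O$ plus $K$-invariance of $\mathcal O$. You get covering from $12\operatorname{vol}(P_0)=\operatorname{vol}(\mathcal O)$; both sides equal four times Catalan's constant, about $3.6639$. The internal pairing then follows from the face-to-face Picard tessellation once covering is known. This is shorter, and your measure-zero argument makes the no-gap step cleaner than the paper's relative-boundary argument.

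The price is an external analytic input: Humbert's covolume formula together with the Lobachevsky evaluation, or Lee's computation. Take that value from outside the paper, not from Proposition~\ref{prop:index-volume}, which is derived from this subdivision. Your route also does not produce the explicit incidence table, which the paper later uses as the fixed elementary-transport lookup in Theorem~\ref{thm:farey-reconstruction}.

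To make your facet count concrete, note two facts. First, $X=c$ and $Y=a^2c^2$ both lie in $K$, so the neighbours of $P_k$ across its four internal facets are $P_{kc^{\pm1}}$ and $P_{kY^{\pm1}}$. Second, those four facets contain the interior point $C$, so exactly four facets per cell lie on $\partial\mathcal O$.

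\textbf{Auxiliary claims that are wrong as written.}

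First, $K$ cannot act transitively on the eight faces with stabilizer $C_3$, since $12/3=4$. In fact $K\cong A_4$ has two orbits, the checkerboard classes $\{F_1,F_3,F_6,F_8\}$ and $\{F_2,F_4,F_5,F_7\}$; it preserves a two-colouring, not a four-colouring. Your count of six per face survives with the correct orbit structure. $P_0$ has its two $R$-halves on $F_1$ and its two $S$-halves on $F_4$, so $kP_0$ puts two triangles on $kF_1$ and two on $kF_4$. Since $|\Stab_K(F_1)|=|\Stab_K(F_4)|=3$, every face receives $3\cdot2=6$ triangles.

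Second, $z\mapsto iz$ does not preserve $\mathcal O$, since it sends $i$ to $-1$. Also $0\infty$ is an edge, not a fourfold axis; the fourfold axes join the opposite vertices $\infty w$, $0(1+i)$ and $1\,i$. Use instead the quarter-turn $z\mapsto iz+1$ about $\infty w$, which does permute $\mathcal O$. Its matrix $\begin{psmallmatrix}i&1\\0&1\end{psmallmatrix}$ has determinant $i$, and no scalar multiple of it lies in $\mathrm{SL}_2(\ZZ[i])$, because that would need $\lambda^2=-i$. So $K\ne S_4$, and with $|\langle a,c\rangle|=12$ inside the octahedral rotation group you get $K=A_4$.

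Finally, the Picard covolume is $\operatorname{vol}(\mathcal O)/12\approx0.3053$, not one third of $\operatorname{vol}(\mathcal O)$. Your concluding equality is the correct one.
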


\begin{proof}
We give the finite incidence argument, including the exact containment and
no-gap checks.  Put
\begin{align*}
 F_1&=(0,1,\infty),&F_2&=(1,1+i,\infty),
 &F_3&=(1+i,i,\infty),&F_4&=(i,0,\infty),\\
 F_5&=(0,w,1),&F_6&=(1,w,1+i),
 &F_7&=(1+i,w,i),&F_8&=(i,w,0).
\end{align*}
Write $f_r$ for the finite centre of $F_r$, $e_{uv}$ for the finite
subdivision point of the ideal edge $\{u,v\}$, and
\[
 C=\left(\frac{1+i}{2},\frac{1}{\sqrt2}\right).
\]
Here a finite point is written as $(z,t)$.  The coordinates needed below are
\begin{align*}
 &(f_1,f_2,f_3,f_4)=
 \left(\left(\tfrac12,\tfrac{\sqrt3}{2}\right),
 \left(1+\tfrac i2,\tfrac{\sqrt3}{2}\right),
 \left(\tfrac12+i,\tfrac{\sqrt3}{2}\right),
 \left(\tfrac i2,\tfrac{\sqrt3}{2}\right)\right),\\
 &(f_5,f_6,f_7,f_8)=
 \left(\left(\tfrac12+\tfrac i4,\tfrac{\sqrt3}{4}\right),
 \left(\tfrac34+\tfrac i2,\tfrac{\sqrt3}{4}\right),
 \left(\tfrac12+\tfrac{3i}{4},\tfrac{\sqrt3}{4}\right),
 \left(\tfrac14+\tfrac i2,\tfrac{\sqrt3}{4}\right)\right).
\end{align*}
For an edge with one endpoint $\infty$, $e_{u\infty}=(u,1)$; for an
edge of the unit square, $e_{uv}=((u+v)/2,1/2)$; and
\[
 e_{0w}=\left(\frac{1+i}{3},\frac13\right),\quad
 e_{1w}=\left(\frac{2+i}{3},\frac13\right),\quad
 e_{iw}=\left(\frac{1+2i}{3},\frac13\right),\quad
 e_{(1+i)w}=\left(\frac{2+2i}{3},\frac13\right).
\]
The exact placements are listed in Table~\ref{tab:twelve-placement}; in every
row $p_2=C$.

\begin{table}[htbp]
\centering
\small
\setlength{\tabcolsep}{4.5pt}
\begin{tabular}{@{}c c c c c c@{}}
\toprule
$k$ & ideal edge $(v_0,v_\infty)$ & $p_1$ & $p_2$ & $p_3$ & $j$\\
\midrule
$1$       & $(0,\infty)$ & $f_1$ & $C$ & $f_4$ & $e_{0\infty}$\\
$a$       & $(w,1)$      & $f_6$ & $C$ & $f_5$ & $e_{1w}$\\
$a^2$     & $(i,1+i)$    & $f_3$ & $C$ & $f_7$ & $e_{i(1+i)}$\\
$c^2$     & $(1,0)$      & $f_1$ & $C$ & $f_5$ & $e_{01}$\\
$c$       & $(\infty,1)$ & $f_1$ & $C$ & $f_2$ & $e_{1\infty}$\\
$c^2a^2$  & $(w,i)$      & $f_8$ & $C$ & $f_7$ & $e_{iw}$\\
$ac$      & $(1,1+i)$    & $f_6$ & $C$ & $f_2$ & $e_{1(1+i)}$\\
$ca$      & $(i,0)$      & $f_8$ & $C$ & $f_4$ & $e_{0i}$\\
$a^2c^2$  & $(\infty,i)$ & $f_3$ & $C$ & $f_4$ & $e_{i\infty}$\\
$ac^2$    & $(1+i,w)$    & $f_6$ & $C$ & $f_7$ & $e_{(1+i)w}$\\
$c^2a$    & $(1+i,\infty)$ & $f_3$ & $C$ & $f_2$ & $e_{(1+i)\infty}$\\
$aca$     & $(0,w)$      & $f_8$ & $C$ & $f_5$ & $e_{0w}$\\
\bottomrule
\end{tabular}
\caption{Exact placements of the twelve Picard cells.}
\label{tab:twelve-placement}
\end{table}

The eight closed half-spaces defining $\mathcal O$ are $q_r\geq0$, where
\begin{align*}
 q_1&=y,&q_2&=1-x,&q_3&=1-y,&q_4&=x,\\
 q_5&=(x-\tfrac12)^2+y^2+t^2-\tfrac14,
 &q_6&=(x-1)^2+(y-\tfrac12)^2+t^2-\tfrac14,\\
 q_7&=(x-\tfrac12)^2+(y-1)^2+t^2-\tfrac14,
 &q_8&=x^2+(y-\tfrac12)^2+t^2-\tfrac14.
\end{align*}
Exact substitution of the finite vertices in Table~\ref{tab:twelve-placement}
gives
\[
 \{q_r(v):v\text{ finite}\}=
 \begin{cases}
 \{0,\frac14,\frac13,\frac12,\frac23,\frac34,1\},&1\le r\le4,\\
 \{0,\frac14,\frac13,\frac12,1,\frac32,2\},&5\le r\le8.
 \end{cases}
\]
Thus every vertex, and hence every convex cell $P_k$, lies in $\mathcal O$.

The complete facet incidence is encoded compactly in
Table~\ref{tab:twelve-incidence}.  The four middle columns give the adjacent
cell across the elementary facets $X_0,X_\infty,Y_0,Y_\infty$.  The last two
columns give the octahedral face containing the two boundary triangles
$R_0,R_\infty$, respectively $S_0,S_\infty$.

\begin{table}[htbp]
\centering
\small
\setlength{\tabcolsep}{4.1pt}
\begin{tabular}{@{}c cccc cc@{}}
\toprule
$k$ & $X_0$ & $X_\infty$ & $Y_0$ & $Y_\infty$ & $R_0,R_\infty$ & $S_0,S_\infty$\\
\midrule
$1$      &$c^2$    &$c$       &$ca$       &$a^2c^2$ &$F_1$ &$F_4$\\
$a$      &$ac^2$   &$ac$      &$aca$      &$c^2$    &$F_6$ &$F_5$\\
$a^2$    &$a^2c^2$ &$c^2a$    &$c^2a^2$  &$ac^2$   &$F_3$ &$F_7$\\
$c^2$    &$c$      &$1$       &$a$        &$aca$    &$F_1$ &$F_5$\\
$c$      &$1$      &$c^2$     &$c^2a$     &$ac$     &$F_1$ &$F_2$\\
$c^2a^2$ &$aca$    &$ca$      &$ac^2$     &$a^2$    &$F_8$ &$F_7$\\
$ac$     &$a$      &$ac^2$    &$c$        &$c^2a$   &$F_6$ &$F_2$\\
$ca$     &$c^2a^2$ &$aca$     &$a^2c^2$   &$1$      &$F_8$ &$F_4$\\
$a^2c^2$ &$c^2a$  &$a^2$     &$1$        &$ca$     &$F_3$ &$F_4$\\
$ac^2$   &$ac$     &$a$       &$a^2$      &$c^2a^2$ &$F_6$ &$F_7$\\
$c^2a$   &$a^2$    &$a^2c^2$  &$ac$       &$c$      &$F_3$ &$F_2$\\
$aca$    &$ca$     &$c^2a^2$  &$c^2$      &$a$      &$F_8$ &$F_5$\\
\bottomrule
\end{tabular}
\caption{Exact sector-facet incidence.  Each entry in the middle four columns
has the reciprocal entry with the opposite local type.}
\label{tab:twelve-incidence}
\end{table}

The table has $12\cdot4=48$ internal facet occurrences.  Direct comparison
of their three exact vertices shows that every occurrence has exactly one
reciprocal mate and that the two cells lie on opposite sides; hence there are
$24$ internal pairs.  The remaining $12\cdot4=48$ occurrences are boundary
triangles.  Each $F_r$ occurs in exactly three rows and contributes the two
triangles of the indicated type, so each $F_r$ contains exactly six triangles.
For every $F_r$ these triangles have seven vertices, twelve edges and six
faces; the six boundary edges form one cycle, every interior edge has degree
two, and every vertex link is a path or a cycle as appropriate.  They therefore
form a triangulated disc whose boundary is the subdivided boundary of $F_r$.
The eight discs use the same named subdivision points on common octahedral
edges.  Their union is consequently the entire $\partial\mathcal O$; directly,
its vertex links are cycles and its $f$-vector is $(26,72,48)$.

Let $U=\bigcup_{k\in K}P_k$.  The standard Picard cells form a face-to-face
tessellation, so distinct translates have disjoint interiors.  The matrices
listed above are twelve distinct elements of $\PSL_2(\mathbf Z[i])$, and a
top-dimensional Picard cell has trivial setwise stabilizer; hence the twelve
cells are distinct.  We have proved $U\subseteq\mathcal O$.  By the reciprocal
incidence check, every codimension-one facet of $U$ not lying on
$\partial\mathcal O$ is incident to two cells on opposite sides, whereas the
unpaired facets form all of $\partial\mathcal O$.  Thus $U$ has no relative
polyhedral boundary in the connected interior of $\mathcal O$.  A proper
finite union of closed convex polyhedra inside $\mathcal O$ would have a
codimension-one relative boundary separating one of its cells from a
component of the complement.  Therefore $U=\mathcal O$.

Finally, left multiplication sends $P_k$ to $P_{hk}$ for $h\in K$.  This
action is transitive, and it is free because the twelve cells are distinct.
Hence it is simply transitive.
\end{proof}

\begin{figure}[htbp]
\centering
\begin{tikzpicture}[scale=.93,>=Latex]
 \coordinate (N) at (0,3.25); \coordinate (S) at (0,-.9);
 \coordinate (L) at (-2.05,.95); \coordinate (M) at (-.68,.08);
 \coordinate (R) at (2.05,.95); \coordinate (U) at (.68,1.8);
 \coordinate (Q) at (-.55,1.32); \coordinate (E) at (-.15,.78); \coordinate (F) at (.15,1.35);
 \draw[thick] (L)--(M)--(R)--(U)--cycle;
 \draw[thick] (N)--(L) (N)--(M) (N)--(R) (N)--(U);
 \draw[thick] (S)--(L) (S)--(M) (S)--(R); \draw[dashed] (S)--(U);
 \filldraw[fill=black!8,draw=black,thick] (N)--(L)--(Q)--(E)--cycle;
 \draw (Q)--(F)--(N); \draw (E)--(F);
 \node[align=center] (lab) at (-2.5,2.45) {one standard\\Picard cell $P_0$};
 \draw[-{Latex[length=2mm]}] (lab.south east)--(-.82,1.72);
 \node[align=center] at (0,-1.45) {the other eleven cells are the $K$-translates of $P_0$};
\end{tikzpicture}
\caption{A schematic view of one Picard cell inside the Farey octahedron.  The picture shows the relation between the two cellulations; exact finite vertices and incidences are given in Appendix~\ref{app:picard} and Proposition~\ref{prop:twelve-cells}.}
\label{fig:one-cell-in-octahedron}
\end{figure}

\begin{lemma}[Ordered ideal faces determine the boundary transport]\label{lem:ideal-face-transport}
Let \(F=(\alpha,\beta,\gamma)\) and \(F'=(\alpha',\beta',\gamma')\) be ordered ideal triangular faces in the Gaussian Farey tessellation.  There is at most one element \(g\in G\) satisfying
\[
   g(\alpha,\beta,\gamma)=(\alpha',\beta',\gamma')
\]
and carrying the chosen side of \(F\) to the opposite side of \(F'\).  If such \(g\) exists, it carries the six-triangle subdivision of \(F\) induced by Proposition~\ref{prop:twelve-cells} to the corresponding subdivision of \(F'\).
\end{lemma}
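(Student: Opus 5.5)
Uniqueness is a statement about Möbius transformations. If $g_1,g_2\in G$ both send $(\alpha,\beta,\gamma)$ to $(\alpha',\beta',\gamma')$, then $h=g_2^{-1}g_1$ fixes the three distinct points $\alpha,\beta,\gamma$ of $\partial\HH^3$. A Möbius transformation of $\PP^1(\mathbf C)$ fixing three distinct points is the identity, so $h=1$ in $\PSL_2$. The coorientation condition therefore plays no role in uniqueness. It only constrains existence: the orientation-preserving extension of the unique Möbius map $\alpha\mapsto\alpha'$, $\beta\mapsto\beta'$, $\gamma\mapsto\gamma'$ carries the geodesic plane spanned by $F$ onto that spanned by $F'$, and either sends the chosen side of $F$ to the opposite side of $F'$ or does not. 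I would also remark that this is consistent with Lemma~\ref{lem:flag-rigidity}, since an ordered ideal triangle together with a side already determines a complete oriented flag.

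For the second assertion, I would first reduce to a standard face. Every ideal face of the Gaussian Farey tessellation has the form $g'F_r$ for some $g'\in G$ and some face $F_r$ of $\OO$, by definition of the tessellation as the union of the $G$-translates of $\OO$. The subdivision of $F$ "induced by Proposition~\ref{prop:twelve-cells}" should mean the image under $g'$ of the six boundary triangles lying in $F_r$. First I would check that this is well defined, that is, independent of the choice of $g'$ and $r$.

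The point is that the twelve-cell decomposition is $G$-equivariant globally. The Picard cells $\{gP_0\}$ form a $G$-invariant face-to-face tessellation of $\HH^3$, and by \eqref{eq:twelve-cells} each octahedron $g'\OO$ is the union of the cells $g'kP_0$ with $k\in K$. The six triangles on $g'F_r$ are therefore exactly the two-dimensional faces of this global Picard tessellation that lie in the plane of $g'F_r$ and within $g'F_r$. This description is intrinsic to the point set $F$, so it does not depend on $g'$, and it would be the same if computed from the octahedron on the other side of $F$. With that in hand, any $g\in G$ maps the global Picard tessellation to itself and maps $F$ onto $F'$. Hence it carries the Picard faces contained in $F$ onto those contained in $F'$, and so carries the six-triangle subdivision of $F$ onto that of $F'$. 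Because $g$ respects the vertex ordering, the triangle adjacent to $\alpha$ and $\beta$, for instance, goes to the corresponding triangle of $F'$, which is what "corresponding" means.

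The main obstacle is the intrinsic-description step: showing that the boundary triangles of Proposition~\ref{prop:twelve-cells} are precisely the Picard faces lying in $F_r$, so that the subdivision seen from the two adjacent octahedra agrees. I would obtain this from the face-to-face property of the Picard tessellation, which the proof of Proposition~\ref{prop:twelve-cells} already uses, together with the fact that the union of the six triangles is all of $F_r$. A Picard face of a cell in the neighbouring octahedron that lies in the plane of $F_r$ must then coincide with one of these six triangles.
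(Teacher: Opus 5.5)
Your proof is correct and follows the paper's argument. Uniqueness is the projective uniqueness of a M\"obius map determined by three points, and the six-triangle subdivision is carried across because it is the trace of the $G$-invariant Picard-cell decomposition on the face. The paper also records an explicit existence test in $G$ (a primitive integral matrix with determinant $\pm1$), which the statement does not need; you instead check that the induced subdivision is the same whichever adjacent octahedron it is read from, a point the paper takes for granted.
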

\begin{proof}
Three distinct points of \(\PP^1(\mathbf C)\) determine a unique projective M\"obius transformation.  Since both ordered triples lie in \(\PP^1(\QQ(i))\), its projective class has a representative in \(\PGL_2(\QQ(i))\).  Clear denominators and divide the four entries by their common Gaussian divisor to obtain a primitive matrix \(A_0\in M_2(\ZZ[i])\), unique up to a Gaussian unit.  The projective class lies in \(G=\PSL_2(\ZZ[i])\) if and only if
\[
 \det A_0\in\{1,-1\}.
\]
Indeed, if \(\det A_0=1\) then \(A_0\in\operatorname{SL}_2(\ZZ[i])\), while if \(\det A_0=-1\) then \(iA_0\) has determinant one.  Conversely, primitivity implies that any scalar carrying \(A_0\) to an integral determinant-one representative is a Gaussian unit, whose square is \(\pm1\).  This gives an explicit determinant/unit test for existence in \(G\); uniqueness is already projective uniqueness.  When the test is satisfied, the transformation preserves the Gaussian Farey tessellation.  Since the subdivision of an octahedral face is obtained by intersecting the fixed Picard-cell decomposition with that face, \(g\) carries the six source triangles to the six target triangles.  The coorientation selects the target side and removes the remaining possibility of using the same ideal triangle with the wrong incident octahedron.
\end{proof}

\section{Picard Farey symbols}\label{sec:picard-farey}
We now pass from the ambient octahedron to finite-index subgroups.  The point is to keep only the information that changes with the subgroup.  The twelve-cell subdivision is fixed once and for all; a subgroup contributes the local isotropy of each octahedral occurrence and the way the octahedral faces are identified.  The elementary double-coset statement in Lemma~\ref{lem:octahedral-fibres} is bookkeeping; the Picard-specific new material begins with the decorated object in Definition~\ref{def:farey-symbol} and the reconstruction theorem, Theorem~\ref{thm:farey-reconstruction}.

Before adding subgroup decoration, it is useful to isolate the arithmetic boundary object itself.

\begin{definition}[Picard--Farey set and Farey sphere]\label{def:picard-farey-set}
Put
\[
V_0=\left\{\infty,0,1,i,1+i,\frac{1+i}{2}\right\}\subset \PP^1(\QQ(i)).
\]
For $g\in G$, the six-point set
\[
V_g=gV_0\subset \PP^1(\QQ(i))
\]
is called a \emph{Picard--Farey set}.  Its twelve unimodular Farey edges and eight Farey triangles form the boundary of the ideal octahedron $g\mathcal O$; we call this triangulated two-sphere the associated \emph{Farey sphere}.  Thus the Picard--Farey set is the arithmetic ideal-boundary vertex set, while the Farey sphere remembers its two-dimensional incidence.
\end{definition}

For a subgroup $H$, an octahedral occurrence $x=HgK$ is represented by one such placed Farey sphere.  The Picard--Farey set alone does \emph{not} determine $H$: the subgroup information begins only when we add the local group $J_x$ and the ordered, cooriented identifications of its triangular faces.  In this sense the construction has three visible layers
\[
\text{Picard--Farey set}\;\longrightarrow\;\text{Farey sphere}\;\longrightarrow\;\text{decorated Picard Farey symbol}.
\]
Figure~\ref{fig:farey-set-decoration} displays these layers before the formal definition.

\begin{figure}[htbp]
\centering
\begin{tikzpicture}[>=Latex,scale=.78,transform shape]
\begin{scope}[xshift=-5.0cm]
  \coordinate (z0) at (-1.55,0); \coordinate (z1) at (0,-.85);
  \coordinate (z1i) at (1.55,0); \coordinate (zi) at (0,.85);
  \coordinate (inf) at (0,2.25); \coordinate (w) at (0,-2.25);
  \draw[thick,gray!70] (z0)--(z1)--(z1i)--(zi)--cycle;
  \foreach \v in {z0,z1,z1i,zi}{\draw[thick,gray!70] (inf)--(\v); \draw[thick,gray!70] (w)--(\v);}
  \fill (z0) circle (1.4pt) node[left] {$0$};
  \fill (z1) circle (1.4pt) node[below right] {$1$};
  \fill (z1i) circle (1.4pt) node[right] {$1+i$};
  \fill (zi) circle (1.4pt) node[above left] {$i$};
  \fill (inf) circle (1.4pt) node[above] {$\infty$};
  \fill (w) circle (1.4pt) node[below] {$w$};
  \node[align=center] at (0,-3.0) {Picard--Farey set $V_g$\\and its Farey sphere};
\end{scope}
\draw[-{Latex[length=3mm]},very thick] (-2.35,0)--(-.85,0)
  node[midway,above] {decorate};
\begin{scope}[xshift=4.25cm]
  \coordinate (z0) at (-1.55,0); \coordinate (z1) at (0,-.85);
  \coordinate (z1i) at (1.55,0); \coordinate (zi) at (0,.85);
  \coordinate (inf) at (0,2.25); \coordinate (w) at (0,-2.25);
  \fill[blue!13] (z0)--(z1)--(inf)--cycle;
  \fill[orange!18] (z1)--(z1i)--(inf)--cycle;
  \fill[blue!13] (z1i)--(zi)--(inf)--cycle;
  \fill[orange!18] (zi)--(z0)--(inf)--cycle;
  \fill[green!16] (z0)--(w)--(z1)--cycle;
  \fill[red!13] (z1)--(w)--(z1i)--cycle;
  \fill[green!16] (z1i)--(w)--(zi)--cycle;
  \fill[red!13] (zi)--(w)--(z0)--cycle;
  \draw[thick] (z0)--(z1)--(z1i)--(zi)--cycle;
  \foreach \v in {z0,z1,z1i,zi}{\draw[thick] (inf)--(\v); \draw[thick] (w)--(\v);}
  \node at (-.68,.82) {$A$}; \node at (.68,.82) {$B$};
  \node at (.68,1.42) {$C$}; \node at (-.68,1.42) {$D$};
  \node at (-.68,-1.38) {$E$}; \node at (.68,-1.38) {$F$};
  \node at (.68,-.72) {$G$}; \node at (-.68,-.72) {$H$};
  \node[draw,rounded corners,fill=white,inner sep=2pt] at (0,2.85) {$J_x\le K$};
  \node[align=center] at (0,-3.0) {decorated Farey sphere\\$A\leftrightarrow C$, $B\leftrightarrow D$\\$E\leftrightarrow G$, $F\leftrightarrow H$};
\end{scope}
\end{tikzpicture}
\caption{From arithmetic boundary data to subgroup data.  The left panel shows only the six Gaussian-rational points and their octahedral Farey incidence.  The right panel adds a local group and an illustrative face-pairing decoration.  Paired faces share a fill colour, but the labels make the figure readable in grayscale.  The displayed pairing pattern is the torsion-free one-octahedron pattern used later for $\Gamma_1(2+i)$; a general Picard Farey symbol may have several octahedral occurrences and nontrivial local groups.}
\label{fig:farey-set-decoration}
\end{figure}

\begin{lemma}[Octahedral fibres]\label{lem:octahedral-fibres}
Let \(H\leq G\) have finite index and let \(x=HgK\in H\backslash G/K\).  Put
\[
   J_x=g^{-1}Hg\cap K.
\]
Then the fibre of \(H\backslash G\to H\backslash G/K\) above \(x\) is naturally
\[
   J_x\backslash K,
\]
and therefore
\[
   [G:H]=\sum_{x\in H\backslash G/K}[K:J_x].
\tag{5.1}\label{eq:index-sum}
\]
Changing the representative \(g\) conjugates \(J_x\) inside \(K\).
\end{lemma}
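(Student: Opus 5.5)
The plan is the standard orbit--stabilizer argument for double cosets, applied to the right action of \(K\) on the finite set \(H\backslash G\). First I will note that the map \(H\backslash G\to H\backslash G/K\), \(Hg'\mapsto Hg'K\), is surjective. Its fibre above \(x=HgK\) is exactly the set of right cosets \(Hgk\) with \(k\in K\), which is the orbit of the point \(Hg\) under right multiplication by \(K\). So the first step is to write down the surjection
\[
\phi_g\colon K\longrightarrow \pi^{-1}(x),\qquad k\longmapsto Hgk,
\]
and to identify when two elements have the same image.

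The key computation is that \(Hgk=Hgk'\) holds if and only if \(gk'k^{-1}g^{-1}\in H\). This is equivalent to \(k'k^{-1}\in g^{-1}Hg\), and since \(k'k^{-1}\in K\) automatically, it is equivalent to \(k'k^{-1}\in J_x=g^{-1}Hg\cap K\). Hence \(\phi_g\) induces a bijection \(J_x\backslash K\to\pi^{-1}(x)\), given by \(J_xk\mapsto Hgk\). It is also equivariant for the right action of \(K\). I will call this bijection natural in the sense that it depends only on the chosen representative \(g\) and commutes with the right \(K\)-action. In particular \(|\pi^{-1}(x)|=[K:J_x]\), which is finite since \(|K|=12\).

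The index formula (5.1) then follows because \(H\backslash G\) is the disjoint union of the fibres over the finitely many points of \(H\backslash G/K\), and \(|H\backslash G|=[G:H]\). For the final assertion, I will take another representative \(g'=hgk_0\) with \(h\in H\) and \(k_0\in K\). Then
\[
g'^{-1}Hg'=k_0^{-1}g^{-1}h^{-1}Hhgk_0=k_0^{-1}(g^{-1}Hg)k_0,
\]
and intersecting with \(K=k_0^{-1}Kk_0\) gives \(J_x'=k_0^{-1}J_xk_0\). This is a conjugate of \(J_x\) inside \(K\). The two identifications of the fibre then differ by the bijection \(J_xk\mapsto J_x'k_0^{-1}k\).

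There is no real obstacle here; the argument is bookkeeping. The only point needing care is consistency of sides: right cosets \(Hg\), a right \(K\)-action, and the left quotient \(J_x\backslash K\). I will check that the conjugation by \(k_0\) goes in the direction matching the stated convention \(J_x=g^{-1}Hg\cap K\).
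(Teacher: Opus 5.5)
Your proposal is correct and follows essentially the same route as the paper. Both use the map \(J_xk\mapsto Hgk\) and the equivalence \(Hgk_1=Hgk_2\iff k_1k_2^{-1}\in g^{-1}Hg\cap K\), sum the fibre cardinalities, and conjugate by \(k_0\) under \(g\mapsto hgk_0\). Your additional remarks on right \(K\)-equivariance and the explicit comparison map \(J_xk\mapsto J_x'k_0^{-1}k\) agree with the fibre identification the paper later records in its change-of-normalization proposition.
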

\begin{proof}
Send \(J_xk\) to \(Hgk\).  If \(k_1=j k_2\) with \(j\in J_x\), then \(gjg^{-1}\in H\) and the two right cosets agree.  Conversely
\[
   Hgk_1=Hgk_2
   \iff gk_1k_2^{-1}g^{-1}\in H
   \iff k_1k_2^{-1}\in g^{-1}Hg\cap K=J_x.
\]
Thus the map is bijective.  Summing the cardinalities of the fibres gives \eqref{eq:index-sum}.  Replacing \(g\) by \(hgk_0\), with \(h\in H\) and \(k_0\in K\), conjugates \(J_x\) by \(k_0\), so only its \(K\)-conjugacy class depends on the unmarked occurrence.
\end{proof}

\begin{definition}[Picard Farey symbols]\label{def:farey-symbol}
A \emph{candidate marked Picard Farey symbol} \(\Sigma\) consists of the following information.
\begin{enumerate}[label=(\roman*),leftmargin=2.2em]
\item A finite set \(X\).  For each \(x\in X\), an oriented Gaussian Farey octahedron
\[
\OO_x=g_x\OO
=\bigl(g_x\infty,g_x0,g_x1,g_xi,g_x(1+i),g_xw\bigr),
\qquad g_x\in G,
\]
specified by its ordered six Gaussian-rational vertices.
\item A local subgroup
\[
\widehat J_x\leq\Stab_G(\OO_x),
\qquad J_x=g_x^{-1}\widehat J_xg_x\leq K.
\]
\item For each \(\widehat J_x\)-orbit of outward-cooriented oriented triangular faces \(F\subset\partial\OO_x\), a paired outward-cooriented face \(F'\subset\partial\OO_y\) and a source-to-target element \(q:F\to F'\) in normalized coordinates.  If
\[
J_{x,F}=\{j\in J_x:jF=F\}
\]
denotes the setwise stabilizer of the cooriented oriented face, then
\[
qF=\overline{F'},\qquad
qJ_{x,F}q^{-1}=J_{y,F'}.
\tag{5.2}\label{eq:face-equivariance}
\]
Here \(\overline{F'}\) is the same geometric face with the opposite boundary orientation and coorientation.  A chosen based ordered triple may be used to specify \(q\), but it is not part of the definition of \(J_{x,F}\).
\item The pairing is involutive: the reverse pairing is represented by \(q^{-1}\).  Changing source and target representatives by \(j\in J_x\) and \(j'\in J_y\) replaces
\[
(F,F',q)\quad\text{by}\quad(jF,j'F',j'qj^{-1}).
\tag{5.3}\label{eq:face-representatives}
\]
The graph whose vertices are the occurrences \(X\) and whose edges are paired face orbits is required to be connected.
\item One occurrence and one Picard cell in its fibre are distinguished; this is the marking.
\end{enumerate}

Expand every occurrence through the fixed subdivision of Proposition~\ref{prop:twelve-cells}.  The candidate symbol is called \emph{valid} if the induced maps \(S,R,X,Y\) on
\[
\mathscr C_\Sigma=\bigsqcup_{x\in X}J_x\backslash K
\]
are total permutations, satisfy all eight relators in \eqref{eq:picard-presentation}, and act transitively.

Two valid marked symbols are \emph{marked equivalent} if their expanded pointed \(G\)-sets are isomorphic.  After forgetting the distinguished cell, two valid symbols are \emph{unmarked equivalent} if their expanded \(G\)-sets are isomorphic.
\end{definition}

The definition keeps the Gaussian rational coordinates visible while separating three choices: the octahedral placement, the local isotropy, and the boundary transport.

\begin{proposition}[Change of normalized placements]\label{prop:arithmetic-normalized}
Suppose the normalized placement of an occurrence is changed by
\[
g_x\longmapsto g_xk_x,\qquad k_x\in K.
\]
Then
\[
J_x\longmapsto J_x'=k_x^{-1}J_xk_x,
\qquad
q_{xy}\longmapsto q_{xy}'=k_y^{-1}q_{xy}k_x.
\tag{5.4}\label{eq:normalization-law}
\]
The corresponding fibres are canonically identified by
\[
J_xk\longmapsto J_x'(k_x^{-1}k).
\tag{5.5}\label{eq:fibre-normalization}
\]
Under this identification the reconstructed cell transitions agree.  Thus changing normalized placements changes only representatives, not the reconstructed pointed \(G\)-set.
\end{proposition}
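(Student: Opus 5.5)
The plan is to verify the three transformation laws directly from the definitions, and then to check that the reconstructed transitions commute with the fibre identification \eqref{eq:fibre-normalization}.

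First, the local group. If \(g_x\) is replaced by \(g_xk_x\), then \(\widehat J_x\le\Stab_G(\OO_x)\) is unchanged, because \(g_xk_x\OO=g_x\OO\). Hence
\[
J_x'=(g_xk_x)^{-1}\widehat J_x(g_xk_x)=k_x^{-1}J_xk_x .
\]

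Second, the transport. A face transport in normalized coordinates is the element \(q_{xy}\) for which the ambient map \(g_yq_{xy}g_x^{-1}\) carries the face of \(\OO_x\) to the opposite side of the face of \(\OO_y\). Rewriting this ambient element in the new placements gives
\[
(g_yk_y)\,q'_{xy}\,(g_xk_x)^{-1}=g_yq_{xy}g_x^{-1},
\]
which forces \(q'_{xy}=k_y^{-1}q_{xy}k_x\). Lemma~\ref{lem:ideal-face-transport} shows that this ambient element is unique once the ordered, cooriented faces are fixed. The normalized faces themselves transform as \(F\mapsto k_x^{-1}F\). Substituting gives \(q'(k_x^{-1}F)=k_y^{-1}\overline{F'}\) and \(q'J'_{x,k_x^{-1}F}q'^{-1}=J'_{y,k_y^{-1}F'}\). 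So \eqref{eq:face-equivariance} holds after the change, and \eqref{eq:face-representatives} remains compatible because it is only a further conjugation.

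Third, the fibre map. The map \(J_xk\mapsto J'_xk_x^{-1}k\) is well defined: if \(j\in J_x\), then \(k_x^{-1}jk=(k_x^{-1}jk_x)(k_x^{-1}k)\) with \(k_x^{-1}jk_x\in J_x'\). The same computation applied to \(k_x\) in place of \(k_x^{-1}\) gives the inverse, so the map is bijective. Conceptually, both cosets name the same right coset \(Hg_xk=H(g_xk_x)(k_x^{-1}k)\) in \(H\backslash G\), via the bijection of Lemma~\ref{lem:octahedral-fibres}. It follows that the marking is carried to the corresponding marking.

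Finally, compatibility of the transitions. Each generator \(\sigma\in\{S,R,X,Y\}\) acts on the expanded set in one of two ways. Either it moves the cell \(kP_0\) to an adjacent cell inside the same octahedron, namely \(k'P_0\) with \(k'\) read from Table~\ref{tab:twelve-incidence}. Or it crosses a boundary triangle of \(\OO_x\) through a face transport. I will avoid checking these cases against the tables one by one. Instead I identify every reconstructed transition with right multiplication in \(H\backslash G\), that is \(Hg_xk\mapsto Hg_xk\sigma\), which is exactly how the expansion was built: the cell \(g_xkP_0\) and its \(\sigma\)-neighbour \(g_xk\sigma P_0\). Written in either placement this is the same coset, so the transitions agree under \eqref{eq:fibre-normalization}.

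The main obstacle is justifying that identification intrinsically in the boundary case. When the adjacent cell lies in \(\OO_y\), the transition produces \(J_yk''\) with \(k''\) computed from \(q_{xy}k\sigma\). After the change of placement this becomes \(k_y^{-1}q_{xy}k_x\cdot k_x^{-1}k\sigma=k_y^{-1}(q_{xy}k\sigma)\), which is the image of the old cell under the \(y\)-fibre map. The factor \(k_x\) cancels exactly, and this cancellation is the key step to verify. Care is needed because \(q\) is only defined up to \eqref{eq:face-representatives}, so I would also note that \(J\)-ambiguities disappear in the cosets \(J_y\backslash K\).
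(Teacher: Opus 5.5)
Your proposal is correct and follows essentially the paper's argument. In both, $J_x'$ and $q'_{xy}$ are obtained by rewriting the same ambient data ($\widehat J_x$, respectively the ambient transport $g_yq_{xy}g_x^{-1}$) in the new placements. The transition check is the same cancellation $k_y^{-1}q_{xy}k_x\cdot k_x^{-1}k\sigma=k_y^{-1}(q_{xy}k\sigma)$ that the paper writes as $(\ell')^{-1}q'k'=\ell^{-1}qk=\tau$, with $\tau$ the fixed elementary transport. Your extra checks that the face-equivariance condition (5.2) survives the change and that the fibre map is bijective are correct details the paper leaves implicit.
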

\begin{proof}
The local-group formula follows directly from
\[
(g_xk_x)^{-1}H(g_xk_x)\cap K=k_x^{-1}J_xk_x.
\]
If \(q=g_y^{-1}hg_x\) is a normalized source-to-target transport, then
\[
q'=(g_yk_y)^{-1}h(g_xk_x)=k_y^{-1}qk_x.
\]
The map \eqref{eq:fibre-normalization} is well defined because \(k_x^{-1}jk_x\in J_x'\) for \(j\in J_x\), and it preserves the represented geometric cell.  If \(k,\ell\in K\) are source and target sector representatives and \(\tau\) is the fixed elementary transport, then
\[
\tau=\ell^{-1}qk,\qquad \ell=qk\tau^{-1}.
\tag{5.6}\label{eq:elementary-lookup}
\]
After the normalization change, \(k'=k_x^{-1}k\) and \(\ell'=k_y^{-1}\ell\) give
\[
(\ell')^{-1}q'k'=\ell^{-1}qk=\tau.
\]
Hence every reconstructed transition is unchanged under the canonical fibre identification.
\end{proof}

\begin{figure}[htbp]
\centering
\begin{tikzpicture}[>=Latex,scale=.84,transform shape]
\begin{scope}[xshift=-5.2cm]
  \draw[thick] (0,0) circle (2.05);
  \foreach \ang/\lab in {90/{x_0},45/{x_1},0/{x_2},-45/{x_3},-90/{x_4},-135/{x_5},180/{x_6},135/{x_7}}{
    \coordinate (p\ang) at (\ang:2.05);
    \fill (p\ang) circle (1.3pt);
    \node at (\ang:2.42) {$\lab$};
  }
  \node at (67.5:1.72) {\scriptsize even};
  \node at (22.5:1.72) {\scriptsize odd};
  \node at (-22.5:1.72) {$\alpha$};
  \node at (157.5:1.72) {$\alpha$};
  \draw[<->,bend left=20] (p0) to (p180);
  \node[align=center] at (0,-3.05) {classical Farey necklace:\\$x_j\in\PP^1(\QQ)$\\even, odd and free marks};
\end{scope}
\draw[-{Latex[length=3mm]},very thick] (-2.45,.1)--(-.65,.1);
\begin{scope}[xshift=4.05cm,yshift=.15cm]
  \coordinate (q0) at (-3,0); \coordinate (q1) at (-1.5,0); \coordinate (q2) at (0,0); \coordinate (q3) at (1.5,0); \coordinate (q4) at (3,0);
  \foreach \x/\name in {-2.25/N0,-.75/N1,.75/N2,2.25/N3}{\coordinate (\name) at (\x,1.55);}
  \foreach \x/\name in {-2.25/S0,-.75/S1,.75/S2,2.25/S3}{\coordinate (\name) at (\x,-1.55);}
  \draw[thick] (q0)--(q1)--(q2)--(q3)--(q4);
  \foreach \A/\B/\N/\S in {q0/q1/N0/S0,q1/q2/N1/S1,q2/q3/N2/S2,q3/q4/N3/S3}{\draw[thick] (\A)--(\N)--(\B); \draw[thick] (\A)--(\S)--(\B);}
  \node[above] at (N0) {$\infty$}; \node[above] at (N1) {$\infty$}; \node[above] at (N2) {$\infty$}; \node[above] at (N3) {$\infty$};
  \node[below] at (S0) {$w$}; \node[below] at (S1) {$w$}; \node[below] at (S2) {$w$}; \node[below] at (S3) {$w$};
  \node[above left] at (q0) {$0$}; \node[above] at (q1) {$1$}; \node[above] at (q2) {$1+i$}; \node[above] at (q3) {$i$}; \node[above right] at (q4) {$0$};
  \node at (-2.25,.55) {$\alpha$}; \node at (-.75,.55) {$\beta$}; \node at (.75,.55) {$\alpha^*$}; \node at (2.25,.55) {$\beta^*$};
  \node at (-2.25,-.55) {$\gamma$}; \node at (-.75,-.55) {$\delta$}; \node at (.75,-.55) {$\gamma^*$}; \node at (2.25,-.55) {$\delta^*$};
  \node[align=center] at (0,-2.65) {Picard: an unfolded Farey sphere\\with Gaussian rational vertices\\and paired triangular faces};
\end{scope}
\end{tikzpicture}
\caption{From the classical one-dimensional decoration to the Picard decoration.  Belabas--Bernardi--Perrin-Riou view the boundary arcs of an extended Farey symbol as a cyclic necklace, a modern reformulation of Kulkarni's even, odd and free boundary data.  In the Picard case the natural boundary object is two-dimensional: the eight Farey triangles on the boundary sphere of an octahedron.  The right-hand drawing is an unfolded band; the repeated copies of \(\infty\), \(w\) and \(0\) are identified.  The labels \(\alpha,\alpha^*,\ldots\) are schematic face-pairing marks, not a canonical cyclic order.}
\label{fig:necklace-farey-sphere}
\end{figure}

\begin{example}[The full Picard group]\label{ex:picard-symbol}
For \(H=G\) there is one octahedral occurrence and \(J=K\).  Hence
\[
   J\backslash K=K\backslash K
\]
has one element: the twelve Picard cells of the ambient octahedron become one cell in \(G\backslash\HH^3\).  The face maps are the ambient \(S,R,X,Y\) identifications.  This is the smallest possible Picard Farey symbol.
\end{example}

\begin{example}[The subgroup \(\Gamma_0(1+i)\)]\label{ex:gamma0pi-symbol}
For \(H=\Gamma_0(1+i)\), the Farey quotient still has one octahedral occurrence, but
\[
   J=\{1,ac^2,c^2a,aca\},\qquad |J|=4.
\]
Thus \([K:J]=3\), and one may take three representatives \(1,c,c^2\) for the Picard cells above the octahedron.  Internal adjacency accounts for three face pairs; the remaining nine face pairs are boundary identifications.  This example shows concretely why a Farey symbol must retain the local group: one octahedron need not mean twelve distinct Picard cells.
\end{example}

\begin{figure}[htbp]
\centering
\begin{tikzpicture}[>=Latex]
  \node[draw,rounded corners,minimum width=3.0cm,minimum height=1.55cm,align=center] (g) at (-2.7,0) {$J=K$\\$J\backslash K=\{*\}$};
  \node[below=.45cm of g,align=center] {$G$: one Picard cell\\above one Farey octahedron};
  \node[draw,rounded corners,minimum width=3.5cm,minimum height=1.55cm,align=center] (h) at (2.5,0) {$|J|=4$\\$J\backslash K=\{J,Jc,Jc^2\}$};
  \node[below=.45cm of h,align=center] {$\Gamma_0(1+i)$: three Picard cells\\above one Farey octahedron};
\end{tikzpicture}
\caption{Two elementary Picard Farey symbols.  The boxes display the fibres \(J\backslash K\); the face identifications, which are also part of the symbol, determine how these cells meet the neighbouring fibres.}
\label{fig:basic-symbol-examples}
\end{figure}

\begin{theorem}[Picard Farey reconstruction]\label{thm:farey-reconstruction}
Let \(\Sigma\) be a candidate marked Picard Farey symbol.  Its local and face data determine at most one transition system on
\[
\mathscr C_\Sigma=\bigsqcup_{x\in X}J_x\backslash K.
\tag{5.7}\label{eq:expanded-cells}
\]
The candidate is valid precisely when these transitions make \(\mathscr C_\Sigma\) a transitive finite right \(G\)-set with the prescribed action of \(S,R,X,Y\).  For a valid marked symbol, if \(c_0\) is the distinguished cell, then
\[
H_\Sigma=\Stab_G(c_0),
\qquad
[G:H_\Sigma]=|\mathscr C_\Sigma|=\sum_{x\in X}[K:J_x].
\]

Conversely, every marked finite-index subgroup \(H\leq G\) yields a valid marked Picard Farey symbol for which
\[
\bigsqcup_{x\in H\backslash G/K}J_x\backslash K
\longrightarrow H\backslash G,
\qquad
J_xk\longmapsto Hg_xk,
\tag{5.8}\label{eq:fibre-coset-iso}
\]
is an isomorphism of pointed right \(G\)-sets.  Consequently valid marked symbols modulo marked equivalence correspond to finite-index subgroups of \(G\), while valid unmarked symbols modulo unmarked equivalence correspond to conjugacy classes.  No uniqueness or canonical choice of a combinatorial symbol for a subgroup is asserted.
\end{theorem}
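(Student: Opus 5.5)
The proof splits into three parts: uniqueness of the transitions, the subgroup recovered from a valid symbol, and the converse construction from a subgroup.

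\textbf{Uniqueness of the transitions.} For each generator $s\in\{S,R,X,Y\}$ and each cell $J_xk\in\mathscr C_\Sigma$, I determine the image $J_xk\cdot s$. The facet of $kP_0$ crossed by $s$ is either internal or a boundary triangle; Table~\ref{tab:twelve-incidence} decides which.

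If the facet is internal to $\OO_x$, it lies in the same octahedron. Its neighbour is $J_x k'$, where $k'$ is read from the table. This is well defined on $J_x$-cosets because left multiplication by $J_x$ commutes with the table: the table is $K$-equivariant, since $K$ acts simply transitively on the cells by Proposition~\ref{prop:twelve-cells}.

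If the facet is a boundary triangle, it lies in a face $F$ of $\OO_x$. Choose the $J_x$-orbit representative of $F$ and the recorded transport $q\colon F\to F'$. The target sector is then $\ell=qk\tau^{-1}$ by \eqref{eq:elementary-lookup}, where $\tau$ is the fixed elementary transport, and the image cell is $J_y\ell$. Lemma~\ref{lem:ideal-face-transport} guarantees that $q$ carries the six-triangle subdivision of $F$ onto that of $F'$, so $\ell\in K$ is a genuine sector.

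Well-definedness is the step I expect to be the main obstacle. Replacing $k$ by $jk$ with $j\in J_x$ changes the face representative to $jF$. By \eqref{eq:face-representatives} the transport changes to $j'qj^{-1}$, and the condition $qJ_{x,F}q^{-1}=J_{y,F'}$ from \eqref{eq:face-equivariance} is exactly what forces $J_y\ell$ to be unchanged. I will check the two cases in turn. First, when $j$ lies in $J_{x,F}$, the equivariance condition applies directly. Second, when $j$ moves $F$ to another face in the same orbit, the representative convention \eqref{eq:face-representatives} applies.

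Both cases only ever allow freedom in the choice of $J_y$-representative. Hence there is at most one transition system. Validity, by definition, says that these maps are total permutations satisfying the eight relators of \eqref{eq:picard-presentation} and acting transitively. Since \eqref{eq:picard-presentation} is a presentation of $G$, this is the same as a transitive right $G$-action in which $S,R,X,Y$ act as prescribed.

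\textbf{The subgroup of a valid symbol.} Given a valid symbol, put $H_\Sigma=\Stab_G(c_0)$. By orbit–stabilizer, $[G:H_\Sigma]=|\mathscr C_\Sigma|$, and this cardinality is $\sum_x[K:J_x]$ by construction of $\mathscr C_\Sigma$.

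\textbf{The converse.} Let $H\le G$ have finite index, with a chosen marking. For each $x\in H\backslash G/K$ choose a representative $g_x$ and set $J_x=g_x^{-1}Hg_x\cap K$.

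The face data come from the geometry of the tessellation. Each outward face $F$ of $\OO_x$ is shared with an adjacent octahedron $g'\OO$. Write $Hg'K=Hg_yK$, so that $g'=hg_yk'$ with $h\in H$ and $k'\in K$. Set $q=g_y^{-1}h^{-1}g_x$, adjusted by the appropriate element of $K$ so that $qF=\overline{F'}$. One then checks:
\begin{itemize}
\item the equivariance \eqref{eq:face-equivariance}, by conjugating stabilizers of the shared face;
\item the representative rule \eqref{eq:face-representatives};
\item involutivity of the pairing;
\item connectedness of the pairing graph, which comes from transitivity of $G$ on octahedra through face adjacency.
\end{itemize}

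Next, the map \eqref{eq:fibre-coset-iso} is a bijection by Lemma~\ref{lem:octahedral-fibres}. To see that it is $G$-equivariant, compare the actions of $S,R,X,Y$. On $H\backslash G$, the generator $s$ moves the cell $Hg_xk$ across the corresponding facet of $g_xkP_0$. Tracing this facet through the same internal or boundary case used in the uniqueness step gives the same target cell. Hence the reconstructed symbol is valid, and \eqref{eq:fibre-coset-iso} is an isomorphism of pointed $G$-sets.

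Finally, pointed transitive $G$-sets correspond to finite-index subgroups via stabilizers, and unpointed transitive $G$-sets correspond to conjugacy classes of subgroups. Proposition~\ref{prop:arithmetic-normalized} shows that changing the normalized placements $g_x$ does not change the reconstructed pointed $G$-set. This yields both stated correspondences.
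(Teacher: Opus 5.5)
Your proposal is correct and follows essentially the same route as the paper's proof. Internal adjacencies come from the twelve-cell table, and boundary transitions come from the lookup $\ell=qk\tau^{-1}$, with representative-independence supplied by \eqref{eq:face-representatives} together with \eqref{eq:face-equivariance}. Validity is then identified with a transitive right $G$-set via \eqref{eq:picard-presentation}, and the converse uses the actual tessellation transports and Lemma~\ref{lem:octahedral-fibres}.

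Two small points of wording, neither a gap. First, internal transitions descend to $J_x$-cosets because they are right translations $k\mapsto ks^{\pm1}$ inside $K$, which commute with left multiplication; simple transitivity is not the reason. Second, $q=g_y^{-1}h^{-1}g_x$ already satisfies $qF=\overline{F'}$, so no further adjustment by an element of $K$ is needed.
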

\begin{proof}
For an occurrence \(x\), replace the octahedron by the fibre \(J_x\backslash K\).  Proposition~\ref{prop:twelve-cells} supplies every adjacency lying in the interior of that octahedron.

Consider a boundary face pairing \((F,F',q)\) and a source sector represented by \(J_xk\).  The fixed twelve-cell table contains a unique elementary transport \(\tau\) of the relevant complete oriented flag, and the target sector is determined by
\[
\tau=\ell^{-1}qk,
\qquad
\ell=qk\tau^{-1}.
\]
If the source and target face representatives are changed by \(j\in J_x\) and \(j'\in J_y\), then
\[
q'=j'qj^{-1},\qquad k'=jk,\qquad \ell'=j'\ell
\]
give
\[
(\ell')^{-1}q'k'=\ell^{-1}qk=\tau.
\]
Thus the same elementary lookup is made and \(J_y\ell'=J_y\ell\).  If two source changes preserve the same cooriented face, their quotient lies in \(J_{x,F}\); equation~\eqref{eq:face-equivariance} carries that quotient to \(J_{y,F'}\).  This is exactly the remaining representative-independence condition.  Equality, rather than one-sided inclusion, also makes the reverse transport descend.  The inverse pairing is represented by \(q^{-1}\).

The construction therefore gives four partially defined maps \(S,R,X,Y\) on \(\mathscr C_\Sigma\).  By definition, validity says that they are total permutations, satisfy the eight relations in \eqref{eq:picard-presentation}, and act transitively.  Hence they define a transitive finite right \(G\)-set.  The stabilizer of the distinguished point is \(H_\Sigma\), and the usual transitive-action correspondence gives the index formula.

Conversely, let \(H\leq G\) have finite index and choose representatives \(g_x\) for \(H\backslash G/K\).  Put \(J_x=g_x^{-1}Hg_x\cap K\) and record the actual outward-cooriented octahedral face transports in the Gaussian Farey tessellation.  Their source-to-target representatives satisfy \eqref{eq:face-equivariance}, and Lemma~\ref{lem:octahedral-fibres} gives the bijection \eqref{eq:fibre-coset-iso}.  Under this bijection the reconstructed \(S,R,X,Y\)-maps are the original right-coset action, so the resulting symbol is valid.  The final equivalence statements are the standard pointed and unpointed transitive \(G\)-set correspondences.
\end{proof}

\begin{proposition}[Exact Picard examples]\label{prop:six-examples}
The reconstruction of Theorem~\ref{thm:farey-reconstruction} agrees with independently obtained Picard-cell actions in the following cases:
\begin{enumerate}[label=(\roman*),leftmargin=2.2em]
\item \(G\), with one octahedral occurrence and \(J=K\);
\item \(\Gamma_0(1+i)\), with one occurrence and \(|J|=4\);
\item Lee's two torsion-free index-twelve groups and Hockman's torsion index-twelve group, each with one occurrence and \(J=1\);
\item \(\Gamma_0(3)\), with two occurrences \(A,B\), local groups of orders two and three, and fibre sizes six and four;
\item the level-\((2+i)\) family \(\Gamma_0(2+i),\Gamma_1(2+i),\Gamma(2+i)\), of indices \(6,12,60\).  Their octahedral orbit sizes are respectively
\[
6,\qquad 12,\qquad 12+12+12+12+12,
\]
and their local groups have orders \(2\), \(1\), and \(1,1,1,1,1\).
\end{enumerate}
For \(\Gamma_0(3)\), the reconstructed \(S,R,X,Y\)-permutations agree entry-by-entry with the direct congruence action on \(\PP^1(\mathbf F_9)\).  For the level-\((2+i)\) family all \(312=4(6+12+60)\) reconstructed transitions agree with the direct actions obtained from reduction modulo \(2+i\).
\end{proposition}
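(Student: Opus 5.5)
The proposition is a finite verification, so the plan is to organise it as a comparison of two right \(G\)-sets for each listed subgroup. On one side is the reconstruction \(\mathscr C_\Sigma\) of Theorem~\ref{thm:farey-reconstruction}. On the other is the coset action \(H\backslash G\), realised concretely through a congruence model.

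For the congruence groups the model is the action on a projective line or a vector space over the residue field. For \(\Gamma_0(\mathfrak p)\) it is \(\PP^1(\ZZ[i]/\mathfrak p)\), which gives \(\PP^1(\mathbf F_2)\), \(\PP^1(\mathbf F_9)\) and \(\PP^1(\mathbf F_5)\). For \(\Gamma_1(2+i)\) it is \((\mathbf F_5^2\setminus 0)/\{\pm1\}\), and for \(\Gamma(2+i)\) it is \(\PSL_2(\mathbf F_5)\). For Lee's and Hockman's index-twelve groups the model is their published coset permutations.

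For each case I will first compute the double cosets \(H\backslash G/K\) together with representatives \(g_x\). Reducing the twelve elements of \(K\) in the ordering \eqref{eq:K-order} modulo the level, I compute the \(K\)-orbits on the model set. The orbit sizes give the fibre sizes \([K:J_x]\), and the stabilisers of chosen orbit points give \(J_x\). This is the content of Lemma~\ref{lem:octahedral-fibres}, and it checks the stated orders and the decompositions \(6\), \(12\), \(12\cdot 5\), and \(6+4\) for \(\Gamma_0(3)\), where \(|\PP^1(\mathbf F_9)|=10\).

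As consistency checks, each \(J_x\) must be a subgroup of \(K\) of the right order. It must be trivial exactly for the torsion-free groups, by Proposition~\ref{prop:torsion}. For \(\Gamma_0(1+i)\) the explicit set \(\{1,ac^2,c^2a,aca\}\) is verified directly by reduction mod \(1+i\).

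Next, for each occurrence and each \(J_x\)-orbit of outward faces, I record the face transport \(q=g_y^{-1}hg_x\). Concretely, \(q\) is the element \(g_y^{-1}g_xs\), where \(s\) is the ambient generator crossing the face, and \(y\) is found by locating \(Hg_xs\) in the double-coset table. I then check the equivariance condition \eqref{eq:face-equivariance}.

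I then run the lookup \(\ell=qk\tau^{-1}\) from the proof of Theorem~\ref{thm:farey-reconstruction}, using Table~\ref{tab:twelve-incidence} for internal moves and Lemma~\ref{lem:ideal-face-transport} for boundary moves. This produces the \(S,R,X,Y\) permutations on \(\mathscr C_\Sigma\).

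Finally I compare these permutations with the right action of the reduced matrices \(S,R,X,Y\) on the model, transported through the bijection \eqref{eq:fibre-coset-iso}, namely \(J_xk\mapsto Hg_xk\). Because this bijection is explicit, the comparison is entry by entry. The total count of transitions is \(4(6+12+60)=312\) for the level-\((2+i)\) family, \(4\cdot10\) for \(\Gamma_0(3)\), and smaller numbers for the remaining cases.

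Agreement is guaranteed in principle by the converse part of Theorem~\ref{thm:farey-reconstruction}. The content of the proposition is therefore that the independently chosen symbol data, and the fixed table of Proposition~\ref{prop:twelve-cells}, produce no discrepancy. The computation is what detects a sign or normalisation error.

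The main obstacle is bookkeeping of normalisations. The elements \(g_x\), and the face representatives up to \(J_x\), must be chosen consistently with the positive normalisation. The orientation and coorientation conventions of the elementary transports \(\tau\) must also match the matrices \(S,R,X,Y\) rather than their inverses.

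To control this, I would first run the full pipeline on \(H=G\) and on \(\Gamma_0(1+i)\), where every transition can be checked by hand. Once the right-action conventions are confirmed there, I would freeze them. Proposition~\ref{prop:arithmetic-normalized} then ensures that any remaining choice of placements does not affect the result. The larger cases are then mechanical table checks, best done by computer over \(\mathbf F_5\) and \(\mathbf F_9\).
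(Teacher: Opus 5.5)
Your overall route matches the paper's proof. You realise $H\backslash G$ by a congruence or coset model, read the fibres $J_x\backslash K$ and local groups off the $K$-orbits (Lemma~\ref{lem:octahedral-fibres}), expand through the twelve-cell table, and compare the $S,R,X,Y$-permutations entry by entry with direct reduction. Two steps, however, would fail as written.

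First, your consistency check that $J_x$ ``must be trivial exactly for the torsion-free groups, by Proposition~\ref{prop:torsion}'' is false. It contradicts item~(iii) of the statement you are proving. Only one direction holds: torsion-freeness forces $g^{-1}Hg\cap K=1$, since $K$ is finite. Proposition~\ref{prop:torsion} concerns stabilizers of \emph{all} cells of a no-inversions cellulation, including edges and vertices, whereas $J_x$ sees only the octahedral $3$-cells. Hockman's group is the counterexample: it has $J=1$, yet its two equatorial edge cycles (two edges, each of dihedral angle $\pi/2$) have order-two cycle transformations such as $(\tau_{-i}\delta_1)^2$. These are half-turns about an ideal edge, and they permute the four octahedra around that edge without fixing any. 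Run as stated, your check would reject case~(iii). Keep only the one-way implication, and detect torsion from edge cycles after expansion.

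Second, your ``concrete'' transport $q=g_y^{-1}g_xs$ has the form $g_y^{-1}hg_x$ with $h\in H$ only when $g_xsg_x^{-1}\in H$. In general it does not even carry $F$ to a face of $\OO$. Already with one occurrence it reduces to $q=s$, and $sF$ is a face of $\OO$ only if $sF=F$, since $F$ is the unique face shared by $\OO$ and $s\OO$. The double-coset lookup must therefore return the $K$-component as well as $y$. Suppose $s$ carries $\OO$ to its neighbour across $F$, and $g_xs=hg_y\ell$ with $h\in H$ and $\ell\in K$. Then the transport is $q=\ell s^{-1}=g_y^{-1}h^{-1}g_x$. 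It sends $F\subset\partial\OO\cap\partial(s\OO)$ to a face of $\OO$ with reversed coorientation, and it is unchanged if $s$ is replaced by $s\kappa$ with $\kappa\in K$. For the crossing of an $R$-facet of sector $k$ (so $s=kR$) it gives $\ell^{-1}qk=R$, as the lookup $\tau=\ell^{-1}qk$ requires.

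With these two corrections your plan is the paper's finite verification.
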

\begin{proof}
The first five examples compare the fixed twelve-cell subdivision with independently obtained coset actions.  In the full group, \(J=K\), so the fibre has one cell.  For \(\Gamma_0(1+i)\), the order-four local group gives three cells.  In the three index-twelve octahedral examples, \(J=1\), so all twelve cells occur; the three groups are distinguished by their ordered boundary identifications.

For \(\Gamma_0(3)\), the prime \(3\) is inert and the right cosets identify with \(\PP^1(\mathbf F_9)\).  The \(K\)-action has two orbits of sizes six and four.  Their stabilizers are the displayed groups \(J_A,J_B\).  Expanding connected transversals gives ten cells and eighteen internal face pairs; the remaining twenty-two pairs are exposed boundary identifications.  The forty resulting facet pairs are involutive with opposite coorientation, and the four permutations coincide literally with reduction of \(S,R,X,Y\) on all ten points.  Appendix~\ref{app:gamma03} records the face-class table and the permutations.

For the level-\((2+i)\) family, reduction identifies \(\ZZ[i]/(2+i)\cong\mathbf F_5\) with \(i\mapsto3\).  The three subgroups correspond to the Borel, upper-unipotent and trivial subgroups of \(\PSL_2(\mathbf F_5)\).  Appendix~\ref{app:level5} records the octahedral representatives, local groups and complete face-incidence table.  Expanding these data through Proposition~\ref{prop:twelve-cells} gives \(6,12,60\) Picard cells, and direct finite-field evaluation checks all \(4(6+12+60)=312\) transitions.

The theorem-level ingredients in this proposition are Lemma~\ref{lem:octahedral-fibres}, Proposition~\ref{prop:twelve-cells} and Theorem~\ref{thm:farey-reconstruction}.  The assertions that the displayed examples agree with independently constructed congruence or coset actions are exact finite verifications; the corresponding tables are recorded in Appendix~\ref{app:examples}.
\end{proof}

\begin{figure}[htbp]
\centering
\begin{tikzpicture}[>=Latex,node distance=3.2cm,scale=.88,transform shape]
  \node[draw,ellipse,minimum width=3.0cm,minimum height=2.1cm,align=center] (A) {$A$\\$J_A=\langle c^2a\rangle$\\$[K:J_A]=6$};
  \node[draw,ellipse,minimum width=3.0cm,minimum height=2.1cm,align=center,right=of A] (B) {$B$\\$J_B=\langle c\rangle$\\$[K:J_B]=4$};
  \draw[-{Latex[length=2.8mm]},bend left=18,thick] (A) to node[above] {$A_3\leftrightarrow B_2$} (B);
  \draw[-{Latex[length=2.8mm]},bend right=18,thick] (A) to node[below] {$A_4\leftrightarrow B_3$} (B);
  \draw[loop left,-{Latex[length=2.5mm]}] (A) to node[left,align=center] {$A_1,A_2$} (A);
  \draw[loop right,-{Latex[length=2.5mm]}] (B) to node[right,align=center] {$B_1,B_4$} (B);
\end{tikzpicture}
\caption{The two octahedral occurrences for \(\Gamma_0(3)\).  The figure records the quotient-level geometry: the two local groups produce fibres of sizes six and four, while two face classes give genuine transport between the octahedra.  The exact ordered face triples and return matrices are listed in Appendix~\ref{app:gamma03}.}
\label{fig:gamma03-two-octahedra}
\end{figure}

\subsection{One-dimensional specialization and Kulkarni's even, odd and free intervals}
The Picard construction should not be interpreted as a literal restriction of a Picard subgroup to \(\PSL_2(\ZZ)\).  There is, however, a precise structural specialization.  Replace the Gaussian Farey octahedron by the standard ideal triangle in the Farey tessellation of \(\HH^2\).  The quotient of the barycentric subdivision is the Bass--Serre graph for \(C_2*C_3\); local groups are the elliptic groups of orders two and three, and there are no genuine two-cell relations after passage to the tree.

In Kulkarni's original terminology a generalized Farey sequence is decorated interval by interval.  An \emph{even interval} records an order-two elliptic pairing, an \emph{odd interval} records the order-three configuration, and the remaining \emph{free intervals} occur in paired pairs and provide the free generators.  Belabas--Bernardi--Perrin-Riou package the same boundary information as an involution on the cyclic necklace of rational arcs, with fixed arcs marked by elliptic order.  Figure~\ref{fig:necklace-farey-sphere} shows why the literal higher-dimensional analogue is a decorated triangulated sphere rather than another cyclic sequence.

\begin{corollary}[Classical Farey-symbol specialization]\label{cor:classical-farey-symbol}
In the modular setting, the analogue of the construction in Theorem~\ref{thm:farey-reconstruction} gives a marked special polygon together with its even, odd and free side pairings.  Under Kulkarni's correspondence, this is equivalent to a marked Farey symbol.  Thus the Picard Farey symbol is a three-dimensional analogue of the classical object in the sense of reconstruction, although it is not obtained by restricting Picard subgroups to the modular group.
\end{corollary}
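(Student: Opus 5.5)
The plan is to run the reconstruction argument of Theorem~\ref{thm:farey-reconstruction} verbatim one dimension lower, with the Farey tessellation of \(\HH^2\) in place of the Gaussian octahedral one, and then invoke Kulkarni's correspondence \cite{Kulkarni1991}.

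\textbf{Ambient data.} Take \(\Gamma=\PSL_2(\ZZ)\) with \(\Gamma=\langle S,X\mid S^2=X^3=1\rangle\). Let the ambient carrier be the ideal triangle \(\Delta=(0,1,\infty)\). Its stabilizer \(K_1=\langle X\rangle\cong C_3\) permutes the three edges. After barycentric subdivision, \(\Delta\) splits into three standard modular cells \(kP\) with \(k\in K_1\). This is the analogue of Proposition~\ref{prop:twelve-cells}, and it is immediate here.

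\textbf{Fibres and boundary pairings.} For a finite-index \(H\), the proof of Lemma~\ref{lem:octahedral-fibres} applies unchanged. It gives triangle occurrences \(x=HgK_1\) with fibres \(J_x\backslash K_1\), where \(J_x\) is either trivial or \(C_3\). The face data of Definition~\ref{def:farey-symbol} become cooriented edge pairings \(q\) with \(qE=\overline{E'}\). Since the setwise stabilizer of a cooriented oriented edge in \(K_1\) is trivial, the equivariance condition \eqref{eq:face-equivariance} is automatic. The elementary lookup \(\ell=qk\tau^{-1}\) therefore defines the \(S\)- and \(X\)-maps on \(\bigsqcup J_x\backslash K_1\). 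Validity means exactly that these maps give a transitive \(\Gamma\)-set satisfying \(S^2=X^3=1\). The converse direction again comes from the coset isomorphism \eqref{eq:fibre-coset-iso}.

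\textbf{Reading off the special polygon.} Choose a spanning tree in the connected occurrence graph, as in Proposition~\ref{prop:lee}. Gluing one lift of each triangle along the tree edges gives a connected union of ideal triangles whose boundary vertices form a generalized Farey sequence.

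The exposed sides are then sorted into Kulkarni's three types.
\begin{itemize}
\item A side whose edge pairing has \(E'=E\) with reversed orientation is fixed by a conjugate of \(S\). Its pairing has order two, so it is an \emph{even} interval.
\item An occurrence with \(J_x=C_3\) has only one fibre cell. Cutting that cell along its \(C_3\)-fixed vertex produces Kulkarni's pair of \emph{odd} sides, whose pairing has order three.
\item All remaining sides are paired in distinct pairs. These are the \emph{free} intervals.
\end{itemize}
The distinguished cell supplies the marking. Kulkarni's theorem then states that such a marked special polygon is equivalent to a marked Farey symbol. The Kurosh decomposition of Corollary~\ref{cor:modular-specialization} serves as a check on the counts of even, odd and free intervals.

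\textbf{Main obstacle.} The hard step is showing that the union is a \emph{special} polygon in Kulkarni's sense, rather than merely some Farey fundamental polygon. In particular, the odd sides must be aligned so that each order-three vertex sits at the correct Farey midpoint. I would handle this first by subdividing each triangle with \(J_x=C_3\) into its three cells and keeping exactly one of them. Kulkarni's normalization of odd intervals is exactly the boundary of that single cell, so this choice should produce the required configuration. Even so, matching his conventions precisely will require a careful local check at that vertex.

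The final sentence of the corollary needs no argument. The construction is performed intrinsically for \(\Gamma\), not by intersecting Picard subgroups with the modular group.
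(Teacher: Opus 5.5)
Your proposal is correct and follows essentially the paper's route: take a maximal tree in the finite quotient, let the elliptic local groups $C_2$ and $C_3$ produce the even and odd sides, let the remaining paired sides give the free part, and then invoke Kulkarni's correspondence. The only difference is presentational: the paper reads this data off the Bass--Serre graph of groups of Corollary~\ref{cor:modular-specialization}, whereas you rerun the reconstruction on the quotient of the dual triangle graph and build the polygon explicitly, keeping for each $C_3$-occurrence the single cell on its tree edge, which is exactly Kulkarni's odd configuration.
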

\begin{proof}
Corollary~\ref{cor:modular-specialization} identifies the quotient cell structure with a finite graph of groups obtained from the Farey tree.  Choosing a maximal tree produces the free part, while vertices with local groups \(C_2\) and \(C_3\) give the even and odd elliptic pieces.  Kulkarni proves that the corresponding special polygon is encoded by its generalized Farey sequence together with these side-pairing labels, and conversely that a Farey symbol reconstructs the special polygon and subgroup \cite{Kulkarni1991}.  These are exactly the one-dimensional counterparts of the local-group and boundary-pairing information retained above.
\end{proof}

\section{A congruence family drawn as Picard Farey symbols}\label{sec:level-five-family}
This section contains worked calculations rather than a new general theorem.  Its purpose is to make the abstract ingredients of Section~\ref{sec:picard-farey} visible in a single congruence family and to provide exact tests of the reconstruction against independent finite-quotient actions.
The abstract reconstruction theorem becomes easier to read once several subgroups are drawn in the same normalization.  We therefore pause before extracting general invariants and examine the prime Gaussian ideal
\[
   \mathfrak p=(2+i),\qquad N\mathfrak p=5.
\]
Reduction identifies \(\ZZ[i]/\mathfrak p\simeq\mathbf F_5\), with \(i\mapsto3\), and \(|\PSL_2(\mathbf F_5)|=60\).  The three standard congruence subgroups give a small but representative family:
\[
\Gamma(\mathfrak p)\subset\Gamma_1(\mathfrak p)\subset\Gamma_0(\mathfrak p)\subset G.
\]
They show, in one level, how local octahedral isotropy, torsion-free one-octahedron domains, normal principal subgroups, multiple octahedral occurrences and cusp lattices appear in the symbol.

\subsection{How to read a decorated octahedral diagram}
Before giving the level-\((2+i)\) examples, it is useful to state explicitly what is and is not contained in the pictures.  A decorated octahedral diagram has two resolutions.  At the coarse resolution there is one vertex for each octahedral occurrence \(x\in H\backslash G/K\); an edge records a paired class of Farey triangular faces.  At the fine resolution the vertex is replaced by the Gaussian-rational octahedron
\[
(g\infty,g0,g1,gi,g(1+i),gw),\qquad w=(1+i)/2,
\]
together with the local group \(J_x=g^{-1}Hg\cap K\).  Each face pairing remembers an ordering of its three ideal vertices and the opposite coorientation.  These are precisely the ingredients needed to recover the unique Picard transformation carrying the source flag to the target flag.

The two resolutions should not be confused.  The coarse incidence graph is excellent for seeing how many octahedra occur and how they are connected, but by itself it does not determine the subgroup.  The three one-octahedron examples of Lee and Hockman already show this: their coarse graph consists of one vertex, and in the torsion-free cases even the local group is trivial, yet the ordered face pairings differ.  Conversely, the fine diagram need not display all twelve Picard cells.  The quotient \(J_x\backslash K\) tells us how those cells are recovered from one octahedral occurrence.

This is the point at which the analogy with a classical Farey symbol is closest.  In the classical picture the rational boundary vertices and the even, odd and free markings are displayed, while the full tessellation by modular triangles is normally suppressed.  Here the Gaussian rational vertices and the decorated triangular faces are displayed, while the fixed twelve-cell subdivision is normally suppressed.  The arithmetic picture is therefore coarser than the expanded Picard-cell picture but still sufficient to recover it.

\begin{figure}[htbp]
\centering
\begin{tikzpicture}[>=Latex,node distance=1.5cm,scale=.86,transform shape]
\node[draw,rounded corners,align=center,minimum width=3.2cm,minimum height=1.3cm] (coarse) {octahedral incidence\\graph};
\node[draw,rounded corners,align=center,minimum width=3.7cm,minimum height=1.3cm,right=of coarse] (fine) {Gaussian-rational octahedra\\+ local groups + face marks};
\node[draw,rounded corners,align=center,minimum width=3.5cm,minimum height=1.3cm,right=of fine] (cells) {expanded Picard cells\\$\bigsqcup J_x\backslash K$};
\draw[-{Latex[length=3mm]},thick] (coarse)--node[above]{decorate}(fine);
\draw[-{Latex[length=3mm]},thick] (fine)--node[above]{expand}(cells);
\draw[-{Latex[length=3mm]},thick,bend right=28] (cells) to node[below]{forget fixed subdivision}(fine);
\end{tikzpicture}
\caption{Three levels of description.  Only the middle object is called the Picard Farey symbol in this paper.  The left-hand graph is a useful visual summary; the right-hand cell structure is the exact expanded object reconstructed by Theorem~\ref{thm:farey-reconstruction}.}
\label{fig:three-resolutions}
\end{figure}

A practical reading rule is consequently: first count the octahedral occurrences; next inspect the local groups; then follow the face-pairing marks; only afterwards expand to Picard cells if an edge cycle, a presentation, or a modular-symbol relation requires it.  This order of reading will be used in the examples below.

\subsection{The same octahedron with and without local isotropy}
Write the eight ordered faces of the standard Farey octahedron as
\[
\begin{array}{llll}
A=(0,1,\infty),&B=(1,1+i,\infty),&C=(1+i,i,\infty),&D=(i,0,\infty),\\
E=(0,w,1),&F=(1,w,1+i),&G=(1+i,w,i),&H=(i,w,0),
\end{array}
\]
where \(w=(1+i)/2\).  For \(\Gamma_0(\mathfrak p)\) there is one octahedral occurrence and
\[
J=K\cap\Gamma_0(\mathfrak p)=\langle c^2a\rangle\cong C_2.
\]
Thus the fibre \(J\backslash K\) has six Picard cells.  The local involution identifies the eight faces in four classes
\[
\{A,C\},\quad\{B,D\},\quad\{E,G\},\quad\{F,H\}.
\tag{6.1}\label{eq:g0-faceclasses}
\]
This is the simplest example in which the local group is visibly part of the Farey symbol rather than an auxiliary correction: without \(J\), one would incorrectly expand the single octahedron to twelve cells.

For \(\Gamma_1(\mathfrak p)\) the underlying Gaussian rational octahedron is unchanged but \(J=1\).  All twelve Picard cells occur and the eight faces are individually visible.  The face pairing is
\[
A\leftrightarrow C,\qquad B\leftrightarrow D,\qquad
E\leftrightarrow G,\qquad F\leftrightarrow H.
\tag{6.2}\label{eq:g1-facepairs}
\]
The exact edge-cycle calculation has no shortened elliptic cycle, so this subgroup is torsion-free.  The contrast between \eqref{eq:g0-faceclasses} and \eqref{eq:g1-facepairs} shows that the decoration makes the elliptic structure visible before a presentation is written down; we reserve Kulkarni's terms ``even'' and ``odd'' for the literal classical specialization of Subsection~5.1.

\begin{figure}[htbp]
\centering
\begin{tikzpicture}[>=Latex,scale=.78,transform shape]
\begin{scope}[xshift=-5.0cm]
  \coordinate (z0) at (-1.55,0); \coordinate (z1) at (0,-.85);
  \coordinate (z1i) at (1.55,0); \coordinate (zi) at (0,.85);
  \coordinate (inf) at (0,2.25); \coordinate (w) at (0,-2.25);
  \draw[thick] (z0)--(z1)--(z1i)--(zi)--cycle;
  \foreach \v in {z0,z1,z1i,zi}{\draw[thick] (inf)--(\v); \draw[thick] (w)--(\v);}
  \fill (z0) circle (1.3pt) node[left] {$0$};
  \fill (z1) circle (1.3pt) node[below right] {$1$};
  \fill (z1i) circle (1.3pt) node[right] {$1+i$};
  \fill (zi) circle (1.3pt) node[above left] {$i$};
  \fill (inf) circle (1.3pt) node[above] {$\infty$};
  \fill (w) circle (1.3pt) node[below] {$w$};
  \node at (-.62,.83) {$A$}; \node at (.62,.83) {$B$};
  \node at (.62,1.43) {$C$}; \node at (-.62,1.43) {$D$};
  \node at (-.62,-1.38) {$E$}; \node at (.62,-1.38) {$F$};
  \node at (.62,-.72) {$G$}; \node at (-.62,-.72) {$H$};
  \node[draw,rounded corners,align=center] at (0,3.02)
    {$\Gamma_0(2+i)$\\$J=\langle c^2a\rangle\cong C_2$};
  \node[align=left,anchor=north west] at (-2.25,-2.85) {\scriptsize
    face orbits under $J$:\\[-1mm]
    \scriptsize $\{A,C\},\ \{B,D\},\ \{E,G\},\ \{F,H\}$};
\end{scope}

\begin{scope}[xshift=5.0cm]
  \coordinate (z0) at (-1.55,0); \coordinate (z1) at (0,-.85);
  \coordinate (z1i) at (1.55,0); \coordinate (zi) at (0,.85);
  \coordinate (inf) at (0,2.25); \coordinate (w) at (0,-2.25);
  \draw[thick] (z0)--(z1)--(z1i)--(zi)--cycle;
  \foreach \v in {z0,z1,z1i,zi}{\draw[thick] (inf)--(\v); \draw[thick] (w)--(\v);}
  \fill (z0) circle (1.3pt) node[left] {$0$};
  \fill (z1) circle (1.3pt) node[below right] {$1$};
  \fill (z1i) circle (1.3pt) node[right] {$1+i$};
  \fill (zi) circle (1.3pt) node[above left] {$i$};
  \fill (inf) circle (1.3pt) node[above] {$\infty$};
  \fill (w) circle (1.3pt) node[below] {$w$};
  \node (A1) at (-.62,.83) {$A$}; \node (B1) at (.62,.83) {$B$};
  \node (C1) at (.62,1.43) {$C$}; \node (D1) at (-.62,1.43) {$D$};
  \node (E1) at (-.62,-1.38) {$E$}; \node (F1) at (.62,-1.38) {$F$};
  \node (G1) at (.62,-.72) {$G$}; \node (H1) at (-.62,-.72) {$H$};
  \node[draw,rounded corners,align=center] at (0,3.02)
    {$\Gamma_1(2+i)$\\$J=1$};
  \node[align=center] at (0,-3.08) {\scriptsize $A\leftrightarrow C,\ B\leftrightarrow D,$\\[-1mm]
    \scriptsize $E\leftrightarrow G,\ F\leftrightarrow H$};
\end{scope}
\end{tikzpicture}
\caption{Exact decorated planar projections of the standard Gaussian Farey octahedron.  The eight labels are the ordered faces listed in the text.  For $\Gamma_0(2+i)$ the local involution identifies the four displayed pairs of faces before any external face transport is introduced.  For $\Gamma_1(2+i)$ the local group is trivial and the four displayed identifications beneath the diagram are the actual face pairings.  Thus the two symbols have the same Gaussian-rational carrier but different local and pairing data.}
\label{fig:level5-g0-g1}
\end{figure}

\subsection{The principal subgroup and the double complete graph}
For the principal group \(\Gamma(\mathfrak p)\), all local groups are trivial and there are five octahedral occurrences.  Convenient representatives are
\[
I,\quad S,\quad R,\quad
\begin{pmatrix}1&0\\1&1\end{pmatrix},\quad
\begin{pmatrix}1&0\\-i&1\end{pmatrix}.
\]
Each contributes twelve Picard cells, giving the index \(60\).  The complete ordered face table has forty directed entries, hence twenty unoriented face adjacencies.  A useful simplification appears when only the incidence of octahedral occurrences is retained: every pair of distinct occurrences is joined by exactly two face classes.  The octahedral adjacency multigraph is therefore
\[
                         2K_5.
\tag{6.3}\label{eq:2k5}
\]
The full Farey symbol is not merely this graph---the Gaussian rational faces and their ordered identifications are still required---but \eqref{eq:2k5} is an effective first picture of a sixty-cell subgroup.

\begin{figure}[htbp]
\centering
\begin{tikzpicture}[>=Latex,scale=1.05]
\foreach \j/\ang in {0/90,1/18,2/-54,3/-126,4/162}{
  \node[draw,circle,minimum size=9mm] (O\j) at (\ang:2.5cm) {$O_\j$};}
\foreach \u/\v in {0/1,0/2,0/3,0/4,1/2,1/3,1/4,2/3,2/4,3/4}{
  \draw[thick,bend left=5] (O\u) to (O\v);
  \draw[thick,bend right=5] (O\u) to (O\v);}
\node[align=center] at (0,-3.35) {two Farey-face classes between every pair\\of octahedral occurrences};
\end{tikzpicture}
\caption{The coarse octahedral incidence graph for \(\Gamma(2+i)\): it is the double complete graph \(2K_5\).  The fine Picard Farey symbol decorates each of these twenty unoriented edges by the corresponding ordered Gaussian-rational faces and return transformation.}
\label{fig:principal-2k5}
\end{figure}

This example also makes Theorem~\ref{thm:torsionfree-farey-domain} concrete.  Choose any spanning tree in the underlying \(K_5\).  Four of the twenty face adjacencies are used to join the five octahedra into a connected polyhedron.  Starting from forty triangular faces, the four internal gluings remove eight boundary faces, leaving thirty-two boundary triangles, or sixteen paired sides.  Thus
\[
N=5,\qquad 3N+1=16,
\]
exactly the general bound in the theorem.  Figure~\ref{fig:principal-spanning-tree} displays one such choice.

\begin{figure}[htbp]
\centering
\begin{tikzpicture}[>=Latex,scale=.90,transform shape]
\newcommand{\octglyph}[3]{%
  \begin{scope}[shift={(#1,#2)}]
    \coordinate (L) at (-.72,0); \coordinate (R) at (.72,0);
    \coordinate (T) at (0,.78); \coordinate (B) at (0,-.78); \coordinate (C) at (0,0);
    \draw[thick] (L)--(T)--(R)--(B)--cycle;
    \draw[thick] (L)--(C)--(R); \draw[thick] (T)--(C)--(B);
    \node[fill=white,inner sep=1pt] at (0,0) {$#3$};
  \end{scope}}
\octglyph{0}{0}{O_0}
\octglyph{-4.0}{1.9}{O_1}
\octglyph{4.0}{1.9}{O_2}
\octglyph{-4.0}{-1.9}{O_3}
\octglyph{4.0}{-1.9}{O_4}
\draw[<->,very thick] (-.72,.28) -- node[above,sloped] {$D\leftrightarrow D$} (-3.28,1.62);
\draw[<->,very thick] (.72,.28) -- node[above,sloped] {$A\leftrightarrow A$} (3.28,1.62);
\draw[<->,very thick] (-.72,-.28) -- node[below,sloped] {$C\leftrightarrow F$} (-3.28,-1.62);
\draw[<->,very thick] (.72,-.28) -- node[below,sloped] {$B\leftrightarrow G$} (3.28,-1.62);
\node[align=center] at (0,-3.15) {chosen lifted tree gluings:\\
$O_0(D)\sim O_1(D)$, $O_0(A)\sim O_2(A)$,\\
$O_0(C)\sim O_3(F)$, $O_0(B)\sim O_4(G)$};
\end{tikzpicture}
\caption{One explicit spanning-tree union for $\Gamma(2+i)$.  The five glyphs represent complete Farey octahedra, and the four displayed face identifications are entries of the exact table in Appendix~\ref{app:level5}.  Gluing along these four faces produces a connected union containing one octahedron from each $\Gamma(2+i)$-orbit.  The other sixteen unoriented face classes remain boundary pairings.  The drawing records the exact adjacency labels; it is not intended as a Euclidean embedding of the hyperbolic union.}
\label{fig:principal-spanning-tree}
\end{figure}

\subsection{Cusps and torsion in the family}
The three examples also illustrate how geometric information is read from the symbol.  The cusp numbers are respectively \(2,2,6\).  For both \(\Gamma_0(\mathfrak p)\) and \(\Gamma_1(\mathfrak p)\), representatives may be chosen as \(\infty,0\), with translation lattices
\[
\ZZ[i],\qquad (2+i)\ZZ[i].
\]
For the principal subgroup there are six cusp classes represented by
\[
\infty,0,i,1,-i,-1,
\]
and the translation lattice at every cusp is \((2+i)\ZZ[i]\).  In the torsion-free cases this immediately verifies the identity of Corollary~\ref{cor:cusp-lattice-sum}:
\[
2(1+5)=12
\quad\text{for }\Gamma_1(2+i),
\qquad
6\cdot2\cdot5=60
\quad\text{for }\Gamma(2+i).
\]
The edge-cycle calculation distinguishes the first member of the family: \(\Gamma_0(2+i)\) has order-two torsion, while \(\Gamma_1(2+i)\) and \(\Gamma(2+i)\) are torsion-free.

\subsection{Noncongruence scope and future examples}
The Farey construction itself is not a congruence construction.  Its input is a finite-index subgroup, or equivalently a finite transitive Picard-cell action satisfying the ambient relations.  It therefore applies without change to noncongruence subgroups.  This point is worth making explicit because congruence examples are computationally convenient and could otherwise give a misleading impression about the scope of the theory.

Normal noncongruence subgroups of the Picard group are known in the literature.  Fine and Newman give the complete list of normal subgroups of index less than sixty and also exhibit normal noncongruence examples \cite[pp.~769--786]{FineNewman1987}.  Such a subgroup has a Picard Farey symbol by Theorem~\ref{thm:farey-reconstruction}; normality moreover makes its quotient action particularly symmetric.  We do not present one of those groups as a worked Picard Farey example here, because its generators have not been transported through the fixed positive octahedral normalization.  Accordingly this subsection records scope and a source of future examples, not an additional computed example.

There are already noncongruence-style examples in the geometric part of the paper in a weaker sense: Lee's classification and the independent octahedral pairings are constructed from face identifications rather than from a congruence condition.  Unless a congruence test has been performed, however, we deliberately call these \emph{geometrically defined} rather than noncongruence.  This distinction is important: a geometric construction may accidentally produce a congruence subgroup.  The present theory is indifferent to that distinction, but the arithmetic label should only be attached after it has been proved.

\section{Geometric information determined by a Picard Farey symbol}\label{sec:invariants}
The compact symbol and its fixed twelve-cell expansion play different roles.  The index, covolume and cusp equivalence classes are visible from the octahedral occurrences, local groups and paired Gaussian-rational faces.  Lower-dimensional stabilizers, edge cycles, cusp return transformations and presentations use the finite reconstructed cell structure.  In this section, ``determined by the symbol'' means obtainable functorially from this finite encoded information; it does not mean that every invariant is visible before the fixed expansion.

\subsection{Index and volume}
\begin{proposition}[Index and covolume]\label{prop:index-volume}
Let \(\Sigma\) be a valid Picard Farey symbol for \(H\leq G\), with normalized local groups \(J_x\leq K\).  Then
\[
[G:H]=\sum_{x\in X}[K:J_x],
\qquad
\operatorname{vol}(H\backslash\HH^3)
=\sum_{x\in X}\frac{\operatorname{vol}(\OO)}{|J_x|}
=\frac{[G:H]}{12}\operatorname{vol}(\OO).
\label{eq:index-volume}
\]
In particular \(\operatorname{vol}(G\backslash\HH^3)=\operatorname{vol}(\OO)/12\).
\end{proposition}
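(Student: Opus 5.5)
The index formula is the one already obtained in Theorem~\ref{thm:farey-reconstruction}. For a valid symbol with distinguished cell \(c_0\), we have \(H=H_\Sigma=\Stab_G(c_0)\). The orbit--stabilizer correspondence for the transitive finite right \(G\)-set \(\mathscr C_\Sigma\) then gives \([G:H]=|\mathscr C_\Sigma|=\sum_x[K:J_x]\). Equivalently, in the converse direction of that theorem, one may count fibres using Lemma~\ref{lem:octahedral-fibres}. I would simply cite this and spend the effort on the volume.

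For the covolume, the plan is to pass through the Picard cell. First, Proposition~\ref{prop:twelve-cells} writes \(\OO\) as the union of the twelve cells \(kP_0\), \(k\in K\), which have pairwise disjoint interiors. Since \(G\)-translates are isometric and cell boundaries have measure zero, this gives \(\operatorname{vol}(\OO)=12\operatorname{vol}(P_0)\). Second, \(P_0\) is a fundamental polyhedron for \(G\), so \(\operatorname{vol}(G\backslash\HH^3)=\operatorname{vol}(P_0)=\operatorname{vol}(\OO)/12\), which is the final assertion. Third, Proposition~\ref{prop:lee} applied to a connected choice of right-coset representatives produces a fundamental polyhedron for \(H\) that is a union of \([G:H]\) translates of \(P_0\). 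Hence \(\operatorname{vol}(H\backslash\HH^3)=[G:H]\operatorname{vol}(P_0)=\frac{[G:H]}{12}\operatorname{vol}(\OO)\).

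The middle expression then follows by inserting the index formula. Since \(|K|=12\), we have \([K:J_x]/12=1/|J_x|\), so
\[
\frac{[G:H]}{12}\operatorname{vol}(\OO)=\sum_{x\in X}\frac{[K:J_x]}{12}\operatorname{vol}(\OO)=\sum_{x\in X}\frac{\operatorname{vol}(\OO)}{|J_x|}.
\]
Geometrically, the \(x\)-th summand is the volume of the image of \(g_x\OO\) in \(H\backslash\HH^3\). Indeed, the stabilizer of \(g_x\OO\) in \(H\) is \(g_xJ_xg_x^{-1}\), which acts freely on the twelve cells, so the image consists of \(|K|/|J_x|\) distinct Picard cells. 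I would mention this only as an interpretation; the proof itself should rest on the counting.

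The only point needing care is the measure-theoretic one. We need that covolume is independent of the chosen fundamental domain, and that the overlaps allowed in Proposition~\ref{prop:twelve-cells} and Proposition~\ref{prop:lee} lie in finitely many totally geodesic pieces, which have measure zero. Both facts are standard, and finiteness of \(\operatorname{vol}(P_0)\) holds because \(G\) has finite covolume. I do not expect any further obstacle.
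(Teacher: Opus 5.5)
Your proof is correct, but it goes through a different intermediate object than the paper's proof does.

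\textbf{The paper's route (octahedral).} It takes the index formula from Lemma~\ref{lem:octahedral-fibres}. It then obtains the covolume by summing, over the occurrences $x\in X$, the orbifold volume $\operatorname{vol}(\OO)/|J_x|$ of the image of $g_x\OO$ in $H\backslash\HH^3$. The last equality uses $|K|=12$.

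\textbf{Your route (Picard cell).} The twelve-cell subdivision gives $\operatorname{vol}(\OO)=12\operatorname{vol}(P_0)$. Lee's construction gives $\operatorname{vol}(H\backslash\HH^3)=[G:H]\operatorname{vol}(P_0)$. The middle expression is then pure algebra from $[K:J_x]=12/|J_x|$.

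\textbf{What your route buys.} Every volume statement rests on a genuine fundamental polyhedron with disjoint interiors. So you never need two facts that the paper's one-line argument uses implicitly:
\begin{enumerate}[label=(\roman*)]
\item the octahedra $g_x\OO$, $x\in X$, represent each $H$-orbit of octahedra exactly once, with $H$-stabilizer $g_xJ_xg_x^{-1}$;
\item the quotient of an octahedron by a finite group acting on it has volume divided by the group order.
\end{enumerate}
It also makes visible that $\operatorname{vol}(H\backslash\HH^3)=[G:H]\operatorname{vol}(G\backslash\HH^3)$ is just multiplicativity of covolume, with the symbol entering only through the index. Citing the count $|\mathscr C_\Sigma|$ from Theorem~\ref{thm:farey-reconstruction} is also slightly more direct for an abstract valid symbol. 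Lemma~\ref{lem:octahedral-fibres} presupposes $J_x=g_x^{-1}Hg_x\cap K$, which is automatic only for the symbol built from $H$.

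\textbf{What the paper's route buys.} It matches the point of Section~\ref{sec:invariants}: covolume is read directly off the compact octahedral data, without passing through the twelve-sector expansion. The middle term $\sum_x\operatorname{vol}(\OO)/|J_x|$ then appears as an actual geometric decomposition of the quotient. In your version it is only an algebraic rewriting, with the geometry relegated to an interpretive remark.
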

\begin{proof}
The index formula is Lemma~\ref{lem:octahedral-fibres}.  The quotient of the interior of an octahedral occurrence by its local group has orbifold volume \(\operatorname{vol}(\OO)/|J_x|\), and \(|K|=12\).
\end{proof}

If \(m=|X|\) and \(n=[G:H]\), then
\[
\left\lceil\frac n{12}\right\rceil\leq m\leq n.
\]
This is only a bound on the number of octahedral occurrences; the amount of face-pairing information may vary among symbols.

\subsection{Cusps from Gaussian rational vertices}
Let \(V(\Sigma)\) be the disjoint union of the six rational vertices of the octahedra occurring in \(\Sigma\), with the equivalence relation generated by the local groups and the paired faces.

\begin{proposition}[Cusps]\label{prop:cusps-from-symbol}
The equivalence classes in \(V(\Sigma)\) are naturally the cusp classes
\[
H\backslash\PP^1(\QQ(i)).
\]
Thus the cusp set is obtained directly from the Gaussian-rational vertices, local actions and octahedral face identifications; the twelve-cell expansion is not needed merely to determine the cusp classes.
\end{proposition}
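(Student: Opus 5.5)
The plan is to build a map from $V(\Sigma)$ to $H\backslash\PP^1(\QQ(i))$, show that it is constant on the generating relations, and prove that the induced map on classes is both surjective and injective. By Theorem~\ref{thm:farey-reconstruction} we may realize $\Sigma$ geometrically: choose representatives $g_x$ with $J_x=g_x^{-1}Hg_x\cap K$, and let the face transports be $q=g_y^{-1}hg_x$ with $h\in H$, as in the proof of Proposition~\ref{prop:arithmetic-normalized}. Send the vertex $v\in V_0$ of occurrence $x$ to the class $Hg_xv$. This is well defined on the relations. A local group element $j\in J_x$ sends $v$ to $jv$, and $g_xjv=(g_xjg_x^{-1})g_xv$ with $g_xjg_x^{-1}\in H$. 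A face pairing identifies a vertex $v$ of $F$ with $qv$ in $F'$, and $g_yqv=hg_xv$. By Proposition~\ref{prop:arithmetic-normalized}, changing the normalized placements does not affect the classes.

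Surjectivity comes from the tessellation. Every Gaussian rational is an ideal vertex of some Farey octahedron $g\OO$, because the octahedra tessellate $\HH^3$ and every cusp is a vertex of the Farey graph. The coset $HgK$ is some $x$, so $g=hg_xk$ and $gv=hg_x(kv)$ with $kv\in V_0$. Hence every cusp class is hit.

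Injectivity is the main step. Suppose $Hg_xv=Hg_yv'$. Lift to the $H$-invariant tessellation: the octahedra $g_x\OO$ and $hg_y\OO$ share the ideal vertex $c=g_xv$. The octahedra having $c$ as a vertex are arranged cyclically around $c$, and any two of them are joined by a finite chain of octahedra. Consecutive octahedra in the chain share a triangular face containing $c$. This holds because the link of $c$, a horosphere cross-section, is the Euclidean square tiling, which is connected through its edges. Each step across a shared face $gF$, from $g\OO$ to $g'\OO$, projects to one of two things. If the two occurrences are distinct in $H\backslash G/K$, or the face is exposed, the step is a symbol face pairing $(F,F',q)$, up to the representative changes \eqref{eq:face-representatives}, which act through the local groups. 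If the two octahedra are the same occurrence, the step is an element of $J_x$. In either case the vertex $c$ is carried along by one of the generating relations. Following the chain therefore connects $(x,v)$ to $(y,v')$, with a final adjustment by an element of $J_y$ coming from $hg_y=h'g_yk$.

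The obstacle is to check carefully that moving between adjacent octahedra at $c$ is always recorded by a symbol pairing, including face orbits paired inside a single occurrence. This reduces to the fact that every boundary face orbit of every occurrence is paired, by Definition~\ref{def:farey-symbol}(iii), which is the statement that the faces of all $H$-translates of the chosen octahedra cover the two-skeleton. None of this uses the twelve-cell expansion, which matches the final assertion of the statement.
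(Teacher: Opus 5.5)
Your proof is correct and follows the same route as the paper. The paper's proof just asserts that two vertices give the same cusp exactly when they are linked by local identifications and face transports, then appeals to Theorem~\ref{thm:farey-reconstruction}; your explicit map, surjectivity check and edge-connectivity of the square-tiling cusp link supply the justification that sketch omits.

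One correction. A step between two distinct adjacent octahedra in the same occurrence is a face pairing of that occurrence with itself, not an element of \(J_x\), since elements of \(J_x\) fix the octahedron. For \(\Gamma_1(2+i)\), where \(J=1\), every step is of this kind. The exposed/internal distinction also plays no role here, because every face orbit of every occurrence is paired in the symbol. Your final paragraph already treats this case correctly.
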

\begin{proof}
Two ideal vertices represent the same cusp precisely when they are related by a sequence of local octahedral identifications and boundary face transports.  The reconstruction theorem identifies the resulting relation with the action of \(H\) on \(\PP^1(\QQ(i))\).
\end{proof}

\begin{definition}[Cusp covering degree]\label{def:cusp-degree}
Let \(C\) be a cusp represented by \(g\infty\).  Its \emph{cusp covering degree} is
\[
w(C)=[G_\infty:g^{-1}Hg\cap G_\infty],
\qquad G_\infty=\Stab_G(\infty).
\label{eq:cusp-degree}
\]
For the Picard group one has explicitly
\[
G_\infty=
\left\{
\left[
\begin{pmatrix}u&b\\0&u^{-1}\end{pmatrix}
\right]:
 u\in\{\pm1,\pm i\},\ b\in\ZZ[i]
\right\}
\cong\ZZ[i]\rtimes C_2.
\]
The corresponding affine action is \(z\mapsto u^2z+ub\).  The translation subgroup is \(z\mapsto z+\lambda\), \(\lambda\in\ZZ[i]\), and the nontrivial element of the finite factor may be represented by \(\left[\begin{smallmatrix}i&0\\0&-i\end{smallmatrix}\right]\), acting by \(z\mapsto-z\) and hence by \(\lambda\mapsto-\lambda\) on the translation lattice.
\end{definition}

\begin{proposition}[Cusp covering degrees]\label{prop:cusp-degree-sum}
For every finite-index subgroup \(H\leq G\),
\[
\sum_{C\in H\backslash\PP^1(\QQ(i))}w(C)=[G:H].
\label{eq:cusp-sum}
\]
The cusp subgroup \(g^{-1}Hg\cap G_\infty\), and hence \(w(C)\), is determined by the cusp return transformations in the reconstructed finite cell structure.
\end{proposition}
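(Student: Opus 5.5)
The plan is to count the right cosets \(H\backslash G\) by their images in \(\PP^1(\QQ(i))\).  The map \(Hg\mapsto H\cdot g\infty\) is well defined because it only uses the \(H\)-orbit of \(g\infty\).  Since \(G\) acts transitively on \(\PP^1(\QQ(i))\) (every reduced pair \((\alpha,\gamma)\) extends to a matrix in \(\operatorname{SL}_2(\ZZ[i])\)), this is a surjection
\[
H\backslash G\longrightarrow H\backslash\PP^1(\QQ(i)).
\]
So it suffices to show that the fibre above a cusp \(C\) represented by \(g\infty\) has exactly \(w(C)\) elements, and then sum over the finitely many cusp classes.

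To identify the fibre, I write \(G\infty\cong G/G_\infty\) and regard the map as \(H\backslash G\to H\backslash G/G_\infty\).  Its fibre above the double coset \(HgG_\infty\) is \(\{Hgb: b\in G_\infty\}\).  This is exactly the argument of Lemma~\ref{lem:octahedral-fibres} with \(K\) replaced by \(G_\infty\).  The map \((g^{-1}Hg\cap G_\infty)b\mapsto Hgb\) is well defined and injective by the same chain of equivalences:
\[
Hgb_1=Hgb_2\iff b_1b_2^{-1}\in g^{-1}Hg\cap G_\infty .
\]
Hence the fibre has \([G_\infty:g^{-1}Hg\cap G_\infty]=w(C)\) elements.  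This index is finite because \(H\) has finite index in \(G\).  The number does not depend on the representative: replacing \(g\) by \(hgb_0\) with \(b_0\in G_\infty\) conjugates the cusp subgroup by \(b_0\).  Summing over the fibres then gives \eqref{eq:cusp-sum}.

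For the second assertion I use Theorem~\ref{thm:farey-reconstruction}, which identifies the reconstructed cell set with \(H\backslash G\) as right \(G\)-sets.  The stabilizer of a cusp vertex \(g\infty\) in \(g^{-1}Hg\) can then be read as the group of return transformations.  These are the products of ambient transports obtained by following cells around the cusp link, namely the Euclidean horosphere cross-section assembled from the boundary triangles incident to that ideal vertex, and returning to the starting cell.  Concretely, a word \(b\in G_\infty\) lies in \(g^{-1}Hg\) if and only if, in the reconstructed finite \(G\)-set, it fixes the point \(Hg\).  That test is a finite permutation computation.  The cusp subgroup is generated by the finitely many loop-closing transports in the cusp link, via the same maximal-tree reasoning as in Proposition~\ref{prop:max-tree} applied to the link.

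I expect this last step to be the main obstacle, but only in bookkeeping.  One has to make sure that the elliptic element \(z\mapsto -z\), and not merely the translations, is detected among the return transformations when local groups are nontrivial.  The counting identity itself is the routine double-coset argument above.
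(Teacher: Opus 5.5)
Your proposal is correct and follows the paper's proof. The paper likewise identifies the cusps with \(H\backslash G/G_\infty\), takes the fibre of \(H\backslash G\to H\backslash G/G_\infty\) above \(HgG_\infty\) to be \((g^{-1}Hg\cap G_\infty)\backslash G_\infty\), sums the fibre sizes, and then simply asserts that the cusp-link return transformations generate the cusp stabilizer. Your added observation makes the second assertion more concrete than the paper's one-line justification: \(b\in G_\infty\) lies in \(g^{-1}Hg\) exactly when it fixes the point \(Hg\) of the reconstructed \(G\)-set, so \(w(C)\) is the size of the \(G_\infty\)-orbit of that point. One small slip to fix: the group you want is the stabilizer of \(\infty\) in \(g^{-1}Hg\) (equivalently, the conjugate of the stabilizer of \(g\infty\) in \(H\)), not the stabilizer of \(g\infty\) in \(g^{-1}Hg\).
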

\begin{proof}
The cusps are the double cosets \(H\backslash G/G_\infty\).  The fibre above \(HgG_\infty\) in \(H\backslash G\to H\backslash G/G_\infty\) is
\[
(g^{-1}Hg\cap G_\infty)\backslash G_\infty,
\]
so summing its cardinalities gives the formula.  The return transformations around the corresponding cusp link generate the cusp stabilizer.
\end{proof}

\begin{corollary}[Translation lattices in the torsion-free case]\label{cor:cusp-lattice-sum}
If \(H\) is torsion-free, its cusp stabilizer at \(C\) is a rank-two translation lattice \(L_C\subset\ZZ[i]\), and
\[
w(C)=2[\ZZ[i]:L_C],
\qquad
\sum_C[\ZZ[i]:L_C]=\frac{[G:H]}2.
\label{eq:lattice-sum}
\]
The symbol determines \(L_C\) up to conjugation in the ambient cusp group and hence determines the Euclidean similarity class of the torus cusp cross-section.
\end{corollary}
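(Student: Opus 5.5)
The plan is to compute the cusp group $\Gamma_C:=g^{-1}Hg\cap G_\infty$ directly and then feed it into Proposition~\ref{prop:cusp-degree-sum}.

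\emph{Structure of the cusp group.} Since $H$ is torsion-free, $\Gamma_C$ is torsion-free. Every element of $G_\infty$ of the form $z\mapsto -z+b$ is an involution: it has order two, with fixed point $b/2$ on the boundary and a fixed vertical geodesic. So $\Gamma_C$ meets the non-translation coset trivially. The composite
\[
\Gamma_C\longrightarrow G_\infty\longrightarrow C_2
\]
is therefore trivial, and $\Gamma_C$ lies in the translation subgroup $\ZZ[i]$. Call this subgroup $L_C$.

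\emph{Rank and index.} $\Gamma_C$ has finite index in $G_\infty$, because $H$ has finite index in $G$ and intersecting with $G_\infty$ preserves finite index. Hence $L_C$ has finite index in $\ZZ[i]$ and has rank two. Multiplicativity of indices in $L_C\le \ZZ[i]\le G_\infty$ gives
\[
w(C)=[G_\infty:L_C]=[G_\infty:\ZZ[i]]\,[\ZZ[i]:L_C]=2[\ZZ[i]:L_C].
\]
Summing over cusps and applying \eqref{eq:cusp-sum} gives $2\sum_C[\ZZ[i]:L_C]=[G:H]$, which is \eqref{eq:lattice-sum}.

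\emph{Dependence on the representative.} Changing the representative $g$ of $C$ to $hgp$ with $h\in H$ and $p\in G_\infty$ conjugates $\Gamma_C$ by $p$. In affine terms, $p$ acts on translations by $\lambda\mapsto u^2\lambda$, and $u^2=\pm1$. So $L_C$ is well defined up to the ambient action of $G_\infty$. This action is trivial on sublattices, since $-L=L$, and conjugation by $z\mapsto z+b$ fixes every translation. Proposition~\ref{prop:cusp-degree-sum} already says the return transformations in the reconstructed cell structure generate $\Gamma_C$. The symbol therefore determines $L_C$.

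\emph{Cross-section.} The cusp cross-section is the quotient of a horosphere centred at $\infty$ by $L_C$. This horosphere is a Euclidean plane, and $L_C$ acts on it by translations, so the cross-section is the flat torus $\mathbf C/L_C$. Changing the height of the horosphere rescales the metric, and changing the cusp representative replaces the lattice by an equivalent one. The invariant is therefore the similarity class of $\mathbf C/L_C$.

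The only step requiring care is the torsion exclusion. I would write the involution explicitly as the class of $\left[\begin{smallmatrix}i&ib\\0&-i\end{smallmatrix}\right]$ and check that its square is $\pm I$ in $\mathrm{SL}_2$, hence trivial in $\PSL_2$.
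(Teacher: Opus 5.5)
Your proof is correct and follows the paper's route: torsion-freeness excludes the order-two rotational coset of $G_\infty$, which gives the factor two, the lattice identity then follows by summing over cusps via Proposition~\ref{prop:cusp-degree-sum}, and the return transformations generate $L_C$. Two small differences: you sharpen the ``up to conjugation'' clause by noting that $G_\infty$ acts on translations by $\lambda\mapsto\pm\lambda$ and so fixes every sublattice, while the paper instead remarks that a $\ZZ$-basis of $L_C$ must be extracted from the return translations by lattice reduction, since two independent ones need not form a basis.
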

\begin{proof}
Torsion-freeness removes the order-two rotational part of \(G_\infty\), giving the factor two in the index.  To recover \(L_C\), collect the translation parts of all cusp return transformations; these generate the full lattice.  A \(\ZZ\)-basis is then obtained by ordinary lattice reduction, for example Hermite or Smith normal form.  Two independent return translations need not by themselves form a basis of \(L_C\).
\end{proof}

\begin{figure}[htbp]
\centering
\begin{tikzpicture}[>=Latex,scale=.82,transform shape]
\begin{scope}[xshift=-4.2cm]
  \foreach \x in {-2,-1,0,1,2}{\draw[thin] (\x,-2)--(\x,2);}
  \foreach \y in {-2,-1,0,1,2}{\draw[thin] (-2,\y)--(2,\y);}
  \fill (0,0) circle (1.4pt) node[below left] {$0$};
  \fill (1,0) circle (1.4pt) node[below] {$1$};
  \fill (0,1) circle (1.4pt) node[left] {$i$};
  \draw[very thick] (0,0)--(1,0)--(1,1)--(0,1)--cycle;
  \draw[->,thick] (0,0)--(1,0); \draw[->,thick] (0,0)--(0,1);
  \node[align=center] at (0,2.45) {$C=\infty$\\$L_C=\ZZ[i]=\langle1,i\rangle$};
  \node at (0,-2.45) {\scriptsize fundamental area $1$};
\end{scope}
\begin{scope}[xshift=4.2cm,scale=.72]
  \foreach \x in {-3,-2,-1,0,1,2,3}{\draw[thin] (\x,-2.5)--(\x,3.5);}
  \foreach \y in {-2,-1,0,1,2,3}{\draw[thin] (-3.2,\y)--(3.2,\y);}
  \foreach \x in {-3,-2,-1,0,1,2,3}{\foreach \y in {-2,-1,0,1,2,3}{\fill (\x,\y) circle (.8pt);}}
  \coordinate (O) at (0,0); \coordinate (P) at (2,1);
  \coordinate (Q) at (-1,2); \coordinate (R) at (1,3);
  \draw[very thick] (O)--(P)--(R)--(Q)--cycle;
  \fill (O) circle (1.7pt); \fill (P) circle (1.7pt); \fill (Q) circle (1.7pt); \fill (R) circle (1.7pt);
  \draw[->,thick] (O)--node[below,sloped] {$2+i$}(P);
  \draw[->,thick] (O)--node[above,sloped] {$-1+2i=i(2+i)$}(Q);
  \node[align=center] at (0,4.25) {$C=0$\\$L_C=(2+i)\ZZ[i]$};
  \node[align=center] at (0,-3.05) {\scriptsize index $[\ZZ[i]:L_C]=N(2+i)=5$\\[-1mm]
    \scriptsize fundamental area $5$};
\end{scope}
\end{tikzpicture}
\caption{Horospherical translation lattices for the two cusps of $\Gamma_1(2+i)$.  At $\infty$ the lattice is $\ZZ[i]$; at $0$ it is the index-five sublattice $(2+i)\ZZ[i]$, generated over $\ZZ$ by $2+i$ and $-1+2i$.  Quotienting the corresponding parallelograms gives the two torus cusp links.  The group $\Gamma_0(2+i)$ has the same translation lattices but retains the order-two rotational part of the ambient cusp group, so its corresponding Euclidean orbifold links are pillowcases rather than tori.}
\label{fig:level5-cusp-links}
\end{figure}

\begin{example}[\(\Gamma_0(1+i)\)]\label{ex:gamma0pi-invariants}
The symbol has one octahedral occurrence with \(|J|=4\), hence index three.  Its rational vertices fall into two cusp classes.  The two translational lattices have indices one and two in \(\ZZ[i]\), and in this orbifold example both cusps retain the order-two rotational part.  Their cusp covering degrees are therefore one and two, whose sum is three.
\end{example}

\subsection{Singular strata, presentations and homology}
The local groups \(J_x\) do not by themselves contain all torsion information: an octahedral local group can be trivial while an edge cycle has nontrivial stabilizer.  After the fixed finite expansion, however, the complete lower-dimensional stabilizer pattern and the relevant return maps are determined.

\begin{proposition}[Further information from the finite reconstruction]\label{prop:further-invariants}
A valid Picard Farey symbol determines, after the fixed finite reconstruction:
\begin{enumerate}[label=(\roman*),leftmargin=2.2em]
\item the finite stabilizer-labelled quotient cell structure and its singular vertex and edge strata;
\item the edge cycles and cusp return transformations;
\item a finite presentation of \(H\) by the standard subgroup or complex-of-groups constructions, together with the peripheral subgroups;
\item the abelianization \(H^{\mathrm{ab}}\);
\item the ordinary cellular homology of the underlying finite quotient cell complex.
\end{enumerate}
The last item is quotient-space homology.  For a group with torsion it is not, in general, the same as \(H_*(H,\ZZ)\); group or equivariant homology must retain the cell stabilizers, for example through an equivariant cellular spectral sequence or a suitable resolution.  When \(H\) is torsion-free, a free \(H\)-equivariant contractible cellular model or retract gives the usual computation of group homology after the appropriate treatment of the cusps.
\end{proposition}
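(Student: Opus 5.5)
The plan is to derive everything from the transitive finite right \(G\)-set \(\mathscr C_\Sigma\cong H\backslash G\) given by Theorem~\ref{thm:farey-reconstruction}, and to view it as the quotient of the fixed no-inversions Picard-cell tessellation of \(\HH^3\). Each Picard cell \(P_k=kP_0\) above an occurrence \(x\) corresponds to the coset \(Hg_xk\). Hence the \(H\)-orbits of top cells are exactly \(\mathscr C_\Sigma\). For a lower-dimensional cell \(\sigma\) of the \(G\)-complex, the \(H\)-orbits of its \(G\)-translates are the double cosets \(H\backslash G/G_\sigma\). The finite stabilizers \(G_\sigma\) are the ones recorded for \(P_0\): \(C_2\times C_2\), \(S_3\), \(A_4\), \(S_3\) on vertices, and \(C_2\), \(C_3\), \(C_3\), \(C_2\) on edges. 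Each \(G_\sigma\) is generated by explicit words in \(S,R,X,Y\), for instance \(\langle X\rangle\), \(\langle Y\rangle\), \(\langle R,S\rangle\). So the orbits of \(G_\sigma\) on \(\mathscr C_\Sigma\) are computable from the permutations, and \(H\)-stabilizer sizes follow by orbit–stabilizer: the stabilizer of \(g\sigma\) in \(H\) is conjugate to \(G_\sigma\cap g^{-1}Hg\), with order \(|G_\sigma|/|\text{orbit}|\). This gives item (i), the stabilizer-labelled quotient cell structure and its singular strata, by a finite computation.

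For item (ii), an edge cycle is the cyclic sequence of Picard cells around a geometric edge. I would read it off by iterating the relevant elliptic generator on \(\mathscr C_\Sigma\). The cycle transformation is recovered as an element of \(H\) from the complete-flag transports, which are unique by Lemma~\ref{lem:flag-rigidity}. For cusps, I would use the classes obtained in Proposition~\ref{prop:cusps-from-symbol}. I would take the cells incident to a fixed ideal vertex, order them around its horospherical link via the fixed twelve-cell incidences, and compose face transports around the link. This yields generators of \(g^{-1}Hg\cap G_\infty\), as in Proposition~\ref{prop:cusp-degree-sum}.

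For item (iii), I would apply Proposition~\ref{prop:max-tree} to the action of \(H\) on a simply connected \(G\)-invariant two-dimensional retract. One option is the dual two-complex of the no-inversions Picard subdivision, which is the equivariant spine whose quotient for \(G\) is the square of Figure~\ref{fig:picard-square-v8e}. Its quotient by \(H\) is finite and is computed from \(\mathscr C_\Sigma\) as above. The maximal-tree presentation then gives a finite presentation of \(H\). Alternatively, Reidemeister–Schreier applied to \eqref{eq:picard-presentation} with transversal \(\mathscr C_\Sigma\) gives the same presentation. Peripheral subgroups are the cusp groups from item (ii). Item (iv) is obtained by abelianizing this finite presentation and computing the Smith normal form. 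Item (v) is the cellular chain complex of the finite quotient CW structure, whose boundary maps come from the incidence table of Proposition~\ref{prop:twelve-cells} pushed to orbits.

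The main obstacle is justifying that the retract used in (iii) is genuinely simply connected and \(G\)-equivariant with finite \(H\)-quotient, despite the cusps. \(\HH^3\) itself is simply connected but the ideal cells are not compact. I would first try to cite the Mendoza–Flöge retract, or to truncate horoball neighbourhoods equivariantly. The truncated space remains simply connected because it deformation retracts from \(\HH^3\). The remaining caveat on group versus quotient homology is stated in the proposition rather than proved, so it only needs a remark citing the equivariant spectral sequence.
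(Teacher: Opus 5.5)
Your proposal is correct and follows the paper's own route: reconstruct the finite Picard-cell action by Theorem~\ref{thm:farey-reconstruction}, then read off stabilizers (via $H\backslash G/G_\sigma$ and orbit--stabilizer), edge cycles, cusp returns, a presentation and homology from the fixed ambient cell structure, in considerably more detail than the paper's short sketch. One small slip: the square of Figure~\ref{fig:picard-square-v8e} is the $G$-quotient of the orbit of the quadrilateral $jP_1P_2P_3$ (which lies on the unit hemisphere with sides on the axes of $R,X,Y,S$), not of the dual two-complex of the Picard tessellation, whose $G$-quotient has a single vertex; either complex (the dual one after subdividing its rotated $2$-cells) or your Reidemeister--Schreier alternative still yields (iii).
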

\begin{proof}
Theorem~\ref{thm:farey-reconstruction} reconstructs the finite Picard-cell action, and the fixed ambient cell structure then determines cell stabilizers, incidences and edge cycles.  Standard Poincar\'e or complex-of-groups procedures give a presentation once their respective hypotheses are satisfied; abelianization follows algebraically.  The final distinction is the usual one between the cellular chain complex of a quotient space and an equivariant resolution when stabilizers are present.
\end{proof}

The analogy with Kulkarni is therefore structural rather than literal.  Index, covolume and cusp classes are visible at the compact octahedral level.  Cusp lattices, edge torsion, peripheral groups and presentations use return transformations or the fixed finite expansion.  Homological calculations must use the chain theory appropriate to whether stabilizers are present.

\section{Torsion-free Farey octahedral domains}\label{sec:torsionfree}
For torsion-free subgroups the finite octahedral stabilizers disappear, so the quotient of the dual Farey-octahedron graph behaves as an ordinary graph covering.  This gives a useful connected fundamental domain made from complete octahedra.  It is important to distinguish this statement from the stronger assertion that a chosen union, with chosen side pairings, satisfies every hypothesis of a particular Poincar\'e polyhedron theorem.

\begin{definition}[Farey octahedral fundamental domain]\label{def:farey-domain}
For \(H\leq G\), a \emph{Farey octahedral fundamental domain} is a connected finite union of complete Gaussian Farey octahedra whose open octahedra represent the \(H\)-orbits of open Farey octahedra exactly once.  Equivalently, its \(H\)-translates cover \(\HH^3\) and have disjoint interiors at the octahedral level.
\end{definition}

\begin{lemma}[Free action on the dual graph]\label{lem:dual-free}
Let \(H\leq G\) be torsion-free.  Then \(H\) acts freely on the vertices of the dual Farey-octahedron graph \(\cD\) and without inversions on its edges.  Consequently \(\cD\to H\backslash\cD\) is an ordinary graph covering.
\end{lemma}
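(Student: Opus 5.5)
The plan is to identify the stabilizer in \(H\) of a vertex and of an edge of \(\cD\) with finite subgroups of \(G\). Torsion-freeness will then force these stabilizers to be trivial, and the covering statement will follow from standard facts about free actions on graphs.

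\emph{Vertices.} A vertex of \(\cD\) is a Farey octahedron \(g\OO\) with \(g\in G\). Its stabilizer in \(G\) is \(gKg^{-1}\), a group of order twelve. Hence its stabilizer in \(H\) is
\[
H\cap gKg^{-1}=g\bigl(g^{-1}Hg\cap K\bigr)g^{-1}=gJ_xg^{-1},
\]
where \(x=HgK\), in the notation of Lemma~\ref{lem:octahedral-fibres}. This group is finite. Every nontrivial element of it has finite order, so if \(H\) is torsion-free it is trivial. Thus \(J_x=1\) for every \(x\), and \(H\) acts freely on the vertices. If one prefers a geometric argument, Proposition~\ref{prop:torsion} gives the same conclusion: the open octahedron is a union of cells of the twelve-cell subdivision.

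\emph{Edges.} Two octahedra are joined by an edge of \(\cD\) exactly when they share a triangular face \(F\). An element \(h\in H\) that inverts such an edge, or more generally fixes it setwise, preserves the pair of incident octahedra. It therefore preserves their common face \(F\), and so lies in the setwise stabilizer of \(F\) in \(G\). That stabilizer is finite: \(h\) permutes the three ideal vertices of \(F\), and a M\"obius transformation fixing three points is the identity, so the stabilizer embeds in \(S_3\). By torsion-freeness \(h=1\). Hence there are no inversions, and edge stabilizers are trivial as well.

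\emph{Covering.} It remains to check that \(\cD\to H\backslash\cD\) is a covering, that is, a local isomorphism on stars. Each vertex of \(\cD\) has degree eight, one edge per face of the octahedron. Suppose two distinct edges at a vertex \(v\) are identified in the quotient by some \(h\in H\). Then \(h\) maps one edge to the other, so \(hv\) is an endpoint of the second edge, which means \(hv=v\) or \(hv\) is the other endpoint \(v'\).
\begin{itemize}[leftmargin=2.2em]
\item If \(hv=v\), freeness on vertices gives \(h=1\), so the two edges were not distinct.
\item If \(hv=v'\), then \(h\) maps the edge \(\{v,v'\}\) to an edge at \(v'\), and the two edges are the faces \(F_1,F_2\) of \(v\). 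Here I expect to need the possibility of a loop or a multi-edge in the quotient, but that does not violate the covering property. The covering condition asks only that the eight edge-ends at \(v\) map bijectively to the edge-ends at the image vertex. That bijectivity holds because any \(h\) identifying two ends at \(v\) must fix \(v\), and therefore \(h=1\).
\end{itemize}

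The main obstacle, though a mild one, is the bookkeeping of edge-ends versus edges when the quotient graph has loops. A loop arises when \(h\) maps a face of \(v\) to a different face of \(hv\) with \(hv\ne v\). I would handle this by working with half-edges (flags of a vertex and an incident face) and observing that \(H\) acts freely on these half-edges.
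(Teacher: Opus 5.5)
Your proof is correct, and for vertices it is the paper's argument: the \(H\)-stabilizer of \(g\OO\) is \(H\cap gKg^{-1}\), a finite subgroup of a torsion-free group, hence trivial.

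For edges you take a different small step.
\begin{itemize}
\item You place a setwise edge stabilizer inside the setwise stabilizer of the shared ideal triangle, which embeds in \(S_3\).
\item The paper instead argues that if \(h\) inverts an edge, then \(h^2\) fixes both endpoints. So \(h^2=1\) by freeness on vertices, and then \(h=1\) by torsion-freeness.
\end{itemize}
The paper's route is purely combinatorial and reuses the vertex statement. Yours uses the finiteness of triangle stabilizers, the same fact the paper invokes later in Theorem~\ref{thm:torsionfree-farey-domain} to exclude self-paired exposed faces. It also gives triviality of the whole setwise edge stabilizer in one step, though that also follows from the paper's two facts combined.

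Your half-edge verification of the covering property spells out what the paper compresses into ``consequently''. The case split in your second bullet is muddled and unnecessary. The clean statement is this: any \(h\) carrying a half-edge \((v,F_1)\) to a half-edge \((v,F_2)\) at the same vertex fixes \(v\), so \(h=1\), which gives injectivity on stars. Surjectivity onto the half-edges at the image vertex is automatic, since a half-edge at \(hv\) is translated by \(h^{-1}\) to one at \(v\). Loops and multiple edges in \(H\backslash\cD\) are then harmless.
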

\begin{proof}
A vertex stabilizer is conjugate to a subgroup of the finite octahedral stabilizer \(K\), so its intersection with torsion-free \(H\) is trivial.  If \(h\in H\) inverted an edge, then \(h^2\) would fix its endpoints; hence \(h^2=1\), and torsion-freeness gives \(h=1\).
\end{proof}

\begin{theorem}[Torsion-free Farey octahedral domains]\label{thm:torsionfree-farey-domain}
Let \(H\leq G=\PSL_2(\ZZ[i])\) be torsion-free of finite index \(n\).  Then
\[
12\mid n,
\qquad
N:=|H\backslash G/K|=\frac n{12}.
\label{eq:torsionfree-count}
\]
There exists a Farey octahedral fundamental domain \(P_T\) consisting of exactly \(N\) complete Farey octahedra.  Every fundamental domain that is a union of complete Farey octahedra contains exactly \(N\) octahedra.

One may choose \(P_T\) so that the number of exposed triangular faces satisfies
\[
f_2^{\mathrm{exp}}(P_T)\leq 6N+2.
\label{eq:face-bound}
\]
These exposed faces occur in distinct \(H\)-paired pairs, so the construction supplies at most
\[
3N+1=\frac n4+1
\label{eq:generator-bound}
\]
side-pairing transformations.  They generate \(H\).  No assertion is made that this generating set is minimal or independent.
\end{theorem}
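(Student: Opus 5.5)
The proof will be organized in four steps: the count of octahedral orbits, the construction of a spanning-tree domain, the exposed-face bound, and generation.

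\textbf{Counting orbits.} We first apply Lemma~\ref{lem:octahedral-fibres}. Since \(H\) is torsion-free and every \(J_x=g_x^{-1}Hg_x\cap K\) is a finite subgroup of a torsion-free group, each \(J_x\) is trivial. The index formula \eqref{eq:index-sum} then reads \(n=\sum_x[K:J_x]=12\,|H\backslash G/K|\). This gives \(12\mid n\) and \(N=n/12\) at once.

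\textbf{Existence and the count \(N\).} By Lemma~\ref{lem:dual-free}, \(\cD\to H\backslash\cD\) is an ordinary graph covering of a connected finite graph with \(N\) vertices. We choose a spanning tree \(T\) of the quotient graph, which has \(N-1\) edges. Because \(T\) is a tree, it lifts isomorphically to a subtree \(\widetilde T\subset\cD\). We take \(P_T\) to be the union of the \(N\) octahedra at the vertices of \(\widetilde T\). This union is connected, since adjacent octahedra in \(\widetilde T\) share a face.
\begin{itemize}
\item The open octahedra of \(P_T\) represent each \(H\)-orbit of octahedra exactly once, so \(P_T\) is a Farey octahedral fundamental domain in the sense of Definition~\ref{def:farey-domain}.
\item Covering holds because every octahedron lies in some orbit.
\item Disjointness of interiors holds because the action on octahedra is free and each orbit is hit once.
\end{itemize}
For the uniqueness of the count, let \(P\) be any fundamental domain that is a union of complete Farey octahedra. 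Its octahedra must meet each orbit. They cannot contain two members of one orbit, since those would be distinct \(H\)-translates with overlapping interiors inside a fundamental domain. Hence \(P\) contains exactly \(N\) octahedra. Alternatively, comparing volumes via Proposition~\ref{prop:index-volume} gives the same conclusion.

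\textbf{Exposed faces.} The \(N\) octahedra together have \(8N\) triangular faces. Each of the \(N-1\) tree gluings makes two of these faces internal. This leaves \(8N-2(N-1)=6N+2\) faces, which proves \eqref{eq:face-bound} with equality in the generic count. Exposed faces could only be fewer if two faces of \(P_T\) coincide without being a tree edge. I would handle that case by simply counting those faces as internal, which still preserves the inequality. Each exposed face \(F\) lies on an octahedron \(O\subset P_T\) and is shared with a neighbour \(O'\notin P_T\). That neighbour is \(hO''\) for a unique \(h\in H\) and a unique \(O''\subset P_T\), using freeness. Then \(h^{-1}F\) is an exposed face of \(O''\). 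The map \(F\mapsto h^{-1}F\) is an involution on exposed faces with no fixed points: a fixed point would give \(h\) stabilizing a face with coorientation reversed, hence an inversion of a dual edge, which Lemma~\ref{lem:dual-free} excludes. So the exposed faces fall into at most \(3N+1=n/4+1\) pairs. This is the same pairing mechanism as in Proposition~\ref{prop:lee}, run at the octahedral level.

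\textbf{Generation.} This is the step I expect to need the most care, because the statement deliberately avoids invoking Poincar\'e's theorem. I would argue it purely with the dual graph. Let \(H'\le H\) be generated by the face-pairing elements. Since \(\cD\) is connected, any \(h\in H\) can be joined from the octahedron \(O_0\) of \(P_T\) to \(hO_0\) by a path in \(\cD\). Track the sequence of \(H\)-translates of \(P_T\) that the path passes through. Moving within a translate \(gP_T\) costs nothing, while crossing from \(gP_T\) to an adjacent translate multiplies \(g\) by a side-pairing element or its inverse. This relies on the fact that every face of \(gP_T\) is either internal or exposed, and exposed faces are glued to \(gh_F^{\pm1}P_T\). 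By induction along the path, the final translate is \(h'P_T\) with \(h'\in H'\). Because the action on octahedra is free and \(hO_0\) lies in a unique translate, we get \(h=h'\). The points to check carefully are that adjacent translates are exactly those across exposed faces, and that the conjugation conventions, \(gh_Fg^{-1}\) versus right multiplication, come out consistently. No minimality claim is needed.
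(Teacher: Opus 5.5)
Your proposal is correct and follows the paper's proof essentially step for step: the count via Lemma~\ref{lem:octahedral-fibres} with every $J_x$ trivial, the lift of a spanning tree through the covering of Lemma~\ref{lem:dual-free}, the $8N-2(N-1)$ exposed-face count, and the side-pairing generation argument. The differences are cosmetic. You exclude self-paired exposed faces via the no-inversions clause of Lemma~\ref{lem:dual-free}, whereas the paper uses finiteness of the triangle stabilizer; both reduce to torsion-freeness. You also write out the path-in-$\cD$ generation argument that the paper only cites as ``the usual side-pairing argument''.
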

\begin{proof}
For every \(g\in G\), torsion-freeness gives \(g^{-1}Hg\cap K=1\).  Lemma~\ref{lem:octahedral-fibres} therefore gives \(n=12N\).

Choose a spanning tree \(T\) of the finite connected graph \(H\backslash\cD\), choose one lift of a root, and lift the tree.  By Lemma~\ref{lem:dual-free} the lift is unique and contains one vertex above each quotient vertex.  The corresponding union \(P_T\) therefore contains exactly one octahedron from every \(H\)-orbit and is connected.  Its translates cover \(\HH^3\), and distinct translates have disjoint interiors, so it is a Farey octahedral fundamental domain.

Any fundamental domain that is a union of complete Farey octahedra must contain at least one representative of every orbit and cannot contain two representatives of the same orbit, because their interiors would be related by a nontrivial element of \(H\).  Hence the number \(N\) is forced within this class.

Before the tree gluings there are \(8N\) triangular face occurrences.  Each of the \(N-1\) tree edges makes one pair internal, so
\[
f_2^{\mathrm{exp}}(P_T)\leq8N-2(N-1)=6N+2.
\]
Additional adjacencies among the selected octahedra can only reduce this number.  The setwise stabilizer in \(G\) of an ideal Farey triangle is finite: its action on the three ideal vertices injects into \(S_3\), because an element fixing three distinct points of \(\PP^1(\mathbf C)\) is the identity.  Thus a nontrivial self-pairing of an exposed triangle would give torsion in \(H\).  The exposed triangles therefore occur in distinct pairs, giving at most \(3N+1\) side-pairing transformations.  The adjacency graph of the translates of \(P_T\) is connected, and the usual side-pairing argument shows that these transformations generate \(H\).
\end{proof}

\begin{remark}[When the domain is a Poincar\'e polyhedron]\label{rem:poincare-domain}
The spanning-tree argument constructs a connected polyhedral fundamental domain.  To use a particular version of the Poincar\'e polyhedron theorem directly on \(P_T\), one must additionally verify its hypotheses for the chosen exposed sides and pairings: the edge-cycle conditions, local finiteness and the completeness conditions at ideal vertices or cusp links.  These properties are not consequences of the graph argument alone.
\end{remark}

\begin{figure}[htbp]
\centering
\begin{tikzpicture}[>=Latex,node distance=1.25cm]
  \node[circle,draw,minimum size=.7cm] (v1) {$1$};
  \node[circle,draw,minimum size=.7cm,right=of v1] (v2) {$2$};
  \node[circle,draw,minimum size=.7cm,right=of v2] (v3) {$3$};
  \node[circle,draw,minimum size=.7cm,above right=.7cm and 1.25cm of v2] (v4) {$4$};
  \draw[very thick] (v1)--(v2)--(v3); \draw[very thick] (v2)--(v4);
  \draw[dashed] (v1) to[bend left=35] (v4); \draw[dashed] (v3) to[bend right=25] (v4);
  \node[align=center,below=.65cm of v2] {solid edges: a spanning tree in $H\backslash\cD$};
  \draw[-{Latex[length=3mm]},thick] (4.6,.65)--(6.0,.65);
  \node[draw,regular polygon,regular polygon sides=4,minimum size=1.1cm,rotate=45] at (7.0,.7) {};
  \node[draw,regular polygon,regular polygon sides=4,minimum size=1.1cm,rotate=45] at (8.1,.7) {};
  \node[draw,regular polygon,regular polygon sides=4,minimum size=1.1cm,rotate=45] at (9.2,.7) {};
  \node[draw,regular polygon,regular polygon sides=4,minimum size=1.1cm,rotate=45] at (8.1,1.8) {};
  \node[align=center] at (8.1,-.55) {one lifted octahedron\\for each quotient vertex};
\end{tikzpicture}
\caption{The spanning-tree construction.  Lifting a spanning tree of the quotient dual graph chooses a connected set containing one complete Farey octahedron from each \(H\)-orbit.}
\label{fig:spanning-tree-domain}
\end{figure}

\begin{corollary}[The first two torsion-free indices]\label{cor:lee-low-index}
For \(n=12\), the theorem gives one complete Farey octahedron and at most four side-pairing transformations; for \(n=24\), it gives two octahedra and at most seven.  The one-octahedron index-twelve domains are displayed in Lee's Section~3 (pp.~186--188), and the doubled index-twenty-four construction in his Section~4 (pp.~188--191) \cite{Lee1984}.
\end{corollary}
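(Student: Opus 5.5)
The corollary is a direct specialization of Theorem~\ref{thm:torsionfree-farey-domain}. I would substitute the two indices into its formulas and then match the resulting domains with Lee's explicit constructions.

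First, take \(n=12\). Theorem~\ref{thm:torsionfree-farey-domain} gives \(12\mid n\) and \(N=n/12=1\). The spanning tree of the one-vertex quotient graph \(H\backslash\cD\) has no edges, so \(P_T\) is a single complete Farey octahedron. The face count is then \(f_2^{\mathrm{exp}}(P_T)\leq 6N+2=8\); since no tree gluings occur, these are exactly the eight faces of \(\OO\). The exposed faces occur in distinct \(H\)-pairs, because a self-paired face would force torsion. This gives at most \(3N+1=4\) side-pairing transformations, and the theorem says they generate \(H\).

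Next, take \(n=24\). Now \(N=2\). The quotient graph is connected with two vertices, so a spanning tree has one edge. Lifting it gives two octahedra glued along one face, and the exposed-face count is at most \(8\cdot2-2=14=6N+2\). This yields at most \(3N+1=7\) pairings.

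The remaining step is the literature identification, and this is where I expect the only real care to be needed. For \(n=12\), by Remark~\ref{rem:lee-example} Lee's union \(Q=\bigcup a_rP_0\) is a single ideal octahedron. Under the normalization change \(L=T^{-1}\), it is a translate of \(\OO\), so it agrees with \(P_T\) up to placement. For \(n=24\), I would check that Lee's doubled domain is a union of two complete Farey octahedra sharing a face. Then the statement in Theorem~\ref{thm:torsionfree-farey-domain} that every such domain has exactly \(N\) octahedra shows it is of the same type as \(P_T\). Beyond this, the proof would only cite the page ranges in \cite{Lee1984}, since no new computation is claimed.
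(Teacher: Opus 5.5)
Your proposal is correct and matches the paper's approach: the corollary is stated without a separate proof, as the specialization \(N=n/12\in\{1,2\}\) of Theorem~\ref{thm:torsionfree-farey-domain}, which gives \(6N+2=8,14\) exposed faces and hence \(3N+1=4,7\) side-pairings. The references to Lee's Sections~3 and~4 are only citations in the paper, so your planned check that Lee's doubled domain consists of two complete Farey octahedra is a reasonable sanity check, but the paper neither proves nor needs it.
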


The theorem fixes the number of complete octahedra, not the boundary complexity among all choices of such domains and not the complexity of arbitrary hyperbolic fundamental polyhedra.  Those stronger minimization questions are left open in Section~\ref{sec:open}.

\section{Bianchi modular symbols and the Gaussian ideal complex}\label{sec:modular-symbols}

This section records a compatibility statement, not a new theory of Bianchi
modular symbols.  The integral presentation used below is an established
presentation of the Steinberg module; the contribution here is to identify its
Gaussian ideal cells with the coarse Farey-octahedral cells used above and to
show that a valid Picard Farey symbol supplies the finite orbit data required
by that presentation.

\subsection{The Steinberg and cusp-divisor modules}\label{subsec:steinberg}

Put $F=\mathbf Q(i)$, $\mathcal O_F=\mathbf Z[i]$, and
$G=\operatorname{PSL}_2(\mathcal O_F)$.  The spherical Tits building
$\mathcal T_2(F)$ is the geometric realization of the poset of nonzero proper
$F$-subspaces of $F^2$.  Since $\dim_F F^2=2$, this is the discrete set of
lines in $F^2$, hence the discrete set $\mathbf P^1(F)$.  We use the standard
integral convention
\[
 \operatorname{St}_F:=\widetilde H_0(\mathcal T_2(F);\mathbf Z).
\]
Consequently there is a canonical $G$-equivariant identification
\begin{equation}\label{eq:st-delta}
 \operatorname{St}_F
 \cong \Delta_0(F):=ker\!\left(
 \mathbf Z[\mathbf P^1(F)]\xrightarrow{\deg}\mathbf Z\right).
\end{equation}
Indeed, for any discrete set $S$, its reduced zeroth homology is the kernel of
the augmentation $\mathbf Z[S]\to\mathbf Z$.  This is the rank-one instance
of the usual definition of the Steinberg module as the top reduced homology of
the Tits building; see Kupers--Miller--Patzt--Wilson
\cite[\S1.2, Theorem~B]{KMPW}, especially pp.~10348--10349.

\subsection{The established Gaussian ideal edge--face complex}
\label{subsec:ideal-complex}

An ordered pair $(\alpha,\beta)$ of distinct cusps is a \emph{Gaussian
unimodular edge} if primitive lifts of the two lines form a basis of
$\mathcal O_K^2$.  Write $[\alpha,\beta]$ for its orientation and impose
$[\beta,\alpha]=-[\alpha,\beta]$.  An oriented Gaussian ideal triangle is a
cyclically ordered triple which, after unit changes of primitive lifts, is
represented by
\[
 Fv_1,\quad Fv_2,\quad F(v_1+v_2),
 \qquad (v_1,v_2)\text{ a basis of }\mathcal O_K^2.
\]
Let $C_1^{\mathrm{id}}$ and $C_2^{\mathrm{id}}$ be the corresponding
oriented cellular modules.  Reversing the orientation of a cell negates its
generator.

\begin{proposition}[Established Gaussian Steinberg presentation]
\label{prop:steinberg-presentation}
There is an exact sequence of left $G$-modules
\begin{equation}\label{eq:ideal-resolution}
 C_2^{\mathrm{id}}\xrightarrow{\partial_2}
 C_1^{\mathrm{id}}\xrightarrow{\partial_1}
 \operatorname{St}_F\longrightarrow0,
\end{equation}
where
\begin{align}
 \partial_1[\alpha,\beta]&=[\beta]-[\alpha],\label{eq:d1}\\
 \partial_2[\alpha,\beta,\gamma]
   &=[\alpha,\beta]+[\beta,\gamma]+[\gamma,\alpha].\label{eq:d2}
\end{align}
The ideal edges and triangles in \eqref{eq:ideal-resolution} are literally
the edges and triangular faces of the Gaussian Farey-octahedral tessellation
used in this paper; no subdivision or comparison map is required.
\end{proposition}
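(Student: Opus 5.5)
The plan splits the statement into three parts: the complex property and surjectivity, exactness at \(C_1^{\mathrm{id}}\), and the identification of the ideal cells with the Farey-octahedral cells.

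\textbf{Complex property and surjectivity.} First, \(\partial_1\partial_2=0\) is immediate from \eqref{eq:d1} and \eqref{eq:d2}, since the three vertex terms cancel telescopically. For surjectivity of \(\partial_1\), the group \(\Delta_0(F)\) is generated by differences \([\beta]-[\alpha]\). The Euclidean algorithm in \(\ZZ[i]\) (Gaussian continued fractions, as in Cremona and Hockman) joins any cusp to \(\infty\) by a finite chain of unimodular edges. Concretely, if \(\alpha=p/q\) is reduced, we write \(p\delta-q\beta=1\) and descend on \(N(q)\). Hence every \([\alpha]-[\infty]\) lies in the image, and so does every generator of \(\Delta_0(F)\).

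\textbf{Exactness at \(C_1^{\mathrm{id}}\).} This is the substantive part, and I would not reprove it. I would invoke the integral generalized Bykovskii presentation of Kupers--Miller--Patzt--Wilson \cite[Theorem~B]{KMPW}, which holds for \(\ZZ[i]\). In rank two it states that \(\operatorname{St}_F\) is generated by the unimodular symbols \([v_1,v_2]\) for bases \((v_1,v_2)\). The relations are antisymmetry, unit rescaling of primitive lifts, and the three-term relation \([v_1,v_2]=[v_1,v_1+v_2]+[v_1+v_2,v_2]\).

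Two translations are then needed. Unit rescaling is absorbed because \(C_1^{\mathrm{id}}\) is indexed by pairs of cusps rather than pairs of vectors, and antisymmetry is built into the orientation convention. The three-term relation is exactly \(\partial_2[Fv_1,F(v_1+v_2),Fv_2]\), up to the orientation sign of the triangle. So the kernel of \(\partial_1\) is the submodule generated by the images of the triangles, which is exactness.

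\textbf{Identification with the Farey-octahedral cells.} Here two checks are needed. For edges, a pair \((\alpha,\beta)\) is unimodular iff \(N(\alpha\delta-\beta\gamma)=1\), which is Hockman's neighbour relation; hence the unimodular edges are exactly the edges of the tessellation.

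For triangles, a representing basis \((v_1,v_2)\) can be moved by \(\mathrm{SL}_2(\ZZ[i])\) to the standard basis, which sends the triangle to \((\infty,0,1)\) up to order. This is a face of \(\OO\), since \(F_1=(0,1,\infty)\). Conversely, \(K\) acts transitively on the eight faces of \(\OO\): this follows from Proposition~\ref{prop:twelve-cells}, because the boundary triangles are permuted by the simply transitive action. Every octahedral face is therefore a \(G\)-translate of \((0,1,\infty)\), hence an ideal triangle.

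\textbf{Main obstacle.} I expect the only delicate point to be this face check, specifically the ideal triangle defined by \(\{Fv_1,Fv_2,F(v_1+v_2)\}\) with unit ambiguity. One must confirm that changing lifts by units, for instance replacing \(v_1+v_2\) by \(v_1+iv_2\), still produces an octahedral face and not a diagonal of the octahedron. This holds because \(v_1+iv_2\) corresponds to the cusp \(i\) in the standard frame, and \((\infty,0,i)\) is the face \(F_4\). By contrast, the cusp \(1+i\), with lift \(v_1+(1+i)v_2\), is not allowed by the definition. Once that case check is done, both the set of cells and the boundary formulas agree literally, and no subdivision map is required.
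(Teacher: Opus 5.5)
Your exactness step is the same as the paper's: both invoke the integral generalized Bykovskii presentation of Kupers--Miller--Patzt--Wilson and read the three-term relation as \(\pm\partial_2\) of an ideal triangle. Your basis-level identification of cells is a reasonable, more self-contained substitute for the paper's appeal to Cremona and Gunnells.

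However, the converse half of your triangle identification rests on a false claim. \(K\) does \emph{not} act transitively on the eight faces of \(\OO\), and not every octahedral face is a \(G\)-translate of \((0,1,\infty)\). Since \(K\) has order \(12\) and acts faithfully by rotations of the octahedron, \(K\cong A_4\). It preserves each of the two tetrahedra inscribed in the dual cube, so it has two orbits of four faces: \(\{A,C,F,H\}\) and \(\{B,D,E,G\}\) in the labelling of Section~\ref{sec:level-five-family}. Proposition~\ref{prop:twelve-cells} shows exactly this rather than transitivity. \(K\) permutes the \(48\) elementary boundary triangles freely in four type-orbits, and Table~\ref{tab:twelve-incidence} places the \(R\)-type triangles only in \(F_1,F_3,F_6,F_8\) and the \(S\)-type ones only in \(F_2,F_4,F_5,F_7\). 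The two classes also stay apart in \(G\). Any M\"obius map carrying \(\{\infty,0,1\}\) onto \(D=\{\infty,0,i\}\) is \(z\mapsto iz\) composed with an anharmonic map. Its primitive matrix therefore has determinant \(\pm i\) and fails the test of Lemma~\ref{lem:ideal-face-transport}. This is why the paper's proof speaks of the additive triangles splitting into \emph{two} \(G\)-orbits (the face types \(A\) and \(E\) of Subsection~\ref{subsec:gamma1-example}).

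The same issue appears in your forward direction. A basis \((v_1,v_2)\) can be moved to the standard basis by \(\mathrm{SL}_2(\ZZ[i])\) only when \(\det(v_1,v_2)=1\). In general, put \(u=\det(v_1,v_2)\); the matrix \(g\) with columns \(v_1\) and \(u^{-1}v_2\) lies in \(\mathrm{SL}_2(\ZZ[i])\) and carries \(\{\infty,0,u^{-1}\}\) to the given triangle. Your ``main obstacle'' paragraph essentially contains the repair:
\begin{itemize}
\item Ideal triangles are exactly the \(G\)-translates of \(\{\infty,0,\pm1\}\), which lie in the orbit of \(A\) since \(T^{-1}\{\infty,0,1\}=\{\infty,0,-1\}\), and of \(\{\infty,0,\pm i\}\), which lie in the orbit of \(D\) since \(U^{-1}\{\infty,0,i\}=\{\infty,0,-i\}\).
\item Conversely, every face of \(\OO\) lies in the \(K\)-orbit of \(A\) or of \(D\).
\item Both \(A\) and \(D\) are ideal triangles, via the bases \(\bigl((1,0),(0,1)\bigr)\) and \(\bigl((i,0),(0,1)\bigr)\).
\item The set of ideal triangles is \(G\)-stable.
\end{itemize}
With this two-orbit argument in place of the transitivity claim, the literal identification of edges and faces goes through, and hence so does the proposition.
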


\begin{proof}
For an integral domain $R$, the generalized Bykovskii module is generated
by ordered bases $(v_1,v_2)$, modulo alternation, multiplication of a vector by
a unit, and the additive relation
\[
 [v_1,v_2]-[v_1+v_2,v_2]+[v_1+v_2,v_1]=0.
\]
For $R=\mathbf Z[i]$, Kupers--Miller--Patzt--Wilson
\cite[Theorem~B]{KMPW} prove integrally that this module is
$\operatorname{St}_F$; their Lemma~4.7 and Corollary~4.9
\cite[pp.~10371--10372]{KMPW} give the corresponding two-term partial
resolution.  Projecting primitive vectors to their $K$-lines identifies an
ordered basis with an oriented unimodular ideal edge.  The displayed additive
relation is exactly
\[
 [Fv_1,Fv_2]+[Fv_2,F(v_1+v_2)]+[F(v_1+v_2),Fv_1]=0,
\]
the boundary of the associated ideal triangle.  This proves exactness and the
formulas for the boundary maps.

It remains to identify the cells.  Cremona proves that the distinguished
geodesics $\{g0,g\infty\}$ form the one-skeleton of an ideal polyhedral
tessellation \cite[\S2.2, pp.~283--284]{Cremona1984}.  In the Gaussian case
his table identifies the basic polyhedron as an octahedron and its projective
stabilizer as a group of order twelve
\cite[\S2.3, p.~290 and Figure~2.3.1]{Cremona1984}.  Its vertices are
\[
 \infty,0,1,i,1+i,\frac{1+i}{2},
\]
after the positive normalization fixed in Section~4.  Its two face orbits
give the relations $1+TS+(TS)^2$ and $1+X+X^2$
\cite[pp.~290--291]{Cremona1984}.  These are precisely the two
$G$-orbits into which the additive triangles above split.  Thus the
unimodular edges and additive triangles are exactly the coarse edges and
triangular faces of the manuscript's Gaussian Farey octahedra.  Gunnells'
Voronoi formulation gives the same identification: for
$K=\mathbf Q(i)$ the unique ideal three-polytope modulo $G$ is an octahedron
\cite[Example~2 in \S3.4, pp.~205--206]{Gunnells1999}.  Hence the comparison
map is the identity on cusps, oriented edges, oriented faces, boundary maps,
stabilizers, and orientation characters.
\end{proof}

For comparison with the older computational formulation, Cremona's
Theorem~2 \cite[p.~284]{Cremona1984} identifies his quotient of distinguished
edge symbols by the polyhedral relation ideal with Bianchi homology over
$\mathbf Q$; the Gaussian relations are written explicitly on p.~291.  The
integral exactness needed in Proposition~\ref{prop:steinberg-presentation} is
supplied by the later Steinberg-module theorem just cited.  Cremona's 1987
addendum \cite{Cremona1987} changes tables of elliptic curves, not the
construction in \S\S2.2--2.3.

\subsection{Finite orbit presentations and coefficients}
\label{subsec:finite-orbits}

Let $H\leq G$ have finite index.  For an underlying ideal cell $\bar\sigma$
choose an orientation $\sigma$, let
\[
 H_\sigma:=\{h\in H:h\bar\sigma=\bar\sigma\},
\]
and define its orientation character by
\[
 h\sigma=\epsilon_\sigma(h)\sigma,
 \qquad \epsilon_\sigma:H_\sigma\longrightarrow\{\pm1\}.
\]
Thus $H_\sigma$ is setwise, not pointwise, stabilizer; notation by the chosen
orientation does not exclude orientation-reversing elements.

\begin{proposition}[Finite $H$-orbit presentation]\label{prop:finite-orbits}
Choose representatives $\mathcal E_H$ and $\mathcal F_H$ for the $H$-orbits
of underlying ideal edges and faces, together with an orientation of each.
Then, as left $H$-modules,
\begin{align}
 C_1^{\mathrm{id}}&\cong
 \bigoplus_{e\in\mathcal E_H}
 \mathbf Z[H]\otimes_{\mathbf Z[H_e]}\mathbf Z_{\epsilon_e},\label{eq:C1ind}\\
 C_2^{\mathrm{id}}&\cong
 \bigoplus_{f\in\mathcal F_H}
 \mathbf Z[H]\otimes_{\mathbf Z[H_f]}\mathbf Z_{\epsilon_f}.
 \label{eq:C2ind}
\end{align}
There are finitely many summands.  Hence \eqref{eq:ideal-resolution} becomes
the finite induced-module presentation
\begin{equation}\label{eq:finite-induced}
 \bigoplus_{f\in\mathcal F_H}\operatorname{Ind}_{H_f}^{H}
       \mathbf Z_{\epsilon_f}
 \longrightarrow
 \bigoplus_{e\in\mathcal E_H}\operatorname{Ind}_{H_e}^{H}
       \mathbf Z_{\epsilon_e}
 \longrightarrow \operatorname{St}_F\longrightarrow0.
\end{equation}
Here ``finite'' means finitely many orbit summands; it does not assert that
$\operatorname{St}_F$ is finitely generated as an abelian group.

Let $V$ be a left $\mathbf Z[H]$-module and put
\[
 \operatorname{Symb}_H(V):=\operatorname{Hom}_{\mathbf Z[H]}
       (\operatorname{St}_F,V),\qquad
 V^{H_\sigma,\epsilon_\sigma}:=
 \{v\in V:hv=\epsilon_\sigma(h)v\ \text{for all }h\in H_\sigma\}.
\]
If
\begin{equation}\label{eq:boundary-coeffs}
 \partial_2 f_j=\sum_r\eta_{jr}h_{jr}e_{i(j,r)},
 \qquad \eta_{jr}\in\{\pm1\},\quad h_{jr}\in H,
\end{equation}
then applying $\operatorname{Hom}_H(-,V)$ gives
\begin{equation}\label{eq:delta}
 0\longrightarrow\operatorname{Symb}_H(V)\longrightarrow
 \bigoplus_{e\in\mathcal E_H}V^{H_e,\epsilon_e}
 \xrightarrow{\delta_H}
 \bigoplus_{f\in\mathcal F_H}V^{H_f,\epsilon_f},
 \qquad
 (\delta_Hv)_j=\sum_r\eta_{jr}\,h_{jr}v_{i(j,r)}.
\end{equation}
In particular,
\begin{equation}\label{eq:symb-kernel}
 \operatorname{Symb}_H(V)=\ker\delta_H.
\end{equation}
No inverse occurs in \eqref{eq:delta}.  If coefficients are originally a
right $H$-module with slash action, the corresponding left action is
$h\cdot v=v\mathbin{|}h^{-1}$.
\end{proposition}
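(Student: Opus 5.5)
The plan is to prove the two induced-module isomorphisms \eqref{eq:C1ind}, \eqref{eq:C2ind} by the standard orbit--stabilizer decomposition of a permutation module with signs, then check finiteness, and finally apply the contravariant functor $\operatorname{Hom}_{\mathbf Z[H]}(-,V)$ to the right-exact sequence \eqref{eq:ideal-resolution}.

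For the decomposition, I would first observe that $C_1^{\mathrm{id}}$ is the free abelian group on oriented edges modulo $[\beta,\alpha]=-[\alpha,\beta]$. Choosing one orientation for each underlying edge therefore gives a $\mathbf Z$-basis indexed by the underlying edges. The left $H$-action permutes underlying edges, up to the sign recording orientation change. Split the underlying edges into $H$-orbits and fix a representative $e$ with chosen orientation. Then define
\[
\mathbf Z[H]\otimes_{\mathbf Z[H_e]}\mathbf Z_{\epsilon_e}\to C_1^{\mathrm{id}},\qquad h\otimes1\mapsto he.
\]
This map is well defined because $hs\otimes1=\epsilon_e(s)\,h\otimes1$ and $hse=\epsilon_e(s)he$ for $s\in H_e$. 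It is a bijection onto the span of that orbit, since a coset transversal of $H/H_e$ maps to a signed basis of the orbit. The identical argument applies to faces, where $H_f$ acts on a cyclically ordered triple through $S_3$ and $\epsilon_f$ is the sign of that permutation.

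Finiteness of the number of summands is the only genuinely geometric step, and I expect it to be the main obstacle. I would argue it from the finiteness of the Farey-octahedral tessellation modulo $G$: there is one $G$-orbit of octahedra, and the octahedron has $12$ edges and $8$ faces, so there are at most $12$ $G$-orbits of edges and $8$ of faces. Indeed one orbit of edges suffices, since $G$ acts transitively on unimodular bases. Each $G$-orbit $G/G_\sigma$ splits into $|H\backslash G/G_\sigma|\le[G:H]$ $H$-orbits. Alternatively, Theorem~\ref{thm:farey-reconstruction} gives $\sum_x[K:J_x]=[G:H]$ octahedral cells, each with boundary of bounded size, which bounds the number of $H$-orbits of cells appearing on them. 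By Proposition~\ref{prop:steinberg-presentation}, every ideal edge and triangle lies on some octahedron, so this bound covers all summands.

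For the Hom sequence, right exactness of \eqref{eq:ideal-resolution} and left exactness of $\operatorname{Hom}_H(-,V)$ give the exact sequence $0\to\operatorname{Hom}_H(\operatorname{St}_F,V)\to\operatorname{Hom}_H(C_1,V)\to\operatorname{Hom}_H(C_2,V)$. Frobenius reciprocity for induction from finite-index subgroups along a one-dimensional twist identifies $\operatorname{Hom}_H(\operatorname{Ind}_{H_\sigma}^H\mathbf Z_{\epsilon_\sigma},V)$ with $V^{H_\sigma,\epsilon_\sigma}$, via $\phi\mapsto\phi(1\otimes1)$. Under this identification, writing $\partial_2f_j$ in the form \eqref{eq:boundary-coeffs} and evaluating $\phi$ gives $\phi(\partial_2f_j)=\sum_r\eta_{jr}h_{jr}\phi(e_{i(j,r)})$, which is exactly the formula for $\delta_H$ in \eqref{eq:delta}. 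No inverse appears because $h_{jr}$ acts on the left throughout. Finally, I would check that the image lands in $V^{H_{f_j},\epsilon_{f_j}}$, which is automatic from $H$-equivariance of $\partial_2$. The slash-action remark is just the usual conversion $h\cdot v=v\mathbin{|}h^{-1}$.
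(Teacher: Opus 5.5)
Your proposal is correct and follows essentially the same route as the paper: each orbit of oriented cells is identified with an induced orientation module, finiteness comes from the finitely many $G$-orbits of ideal cells together with $[G:H]<\infty$, and the coefficient statement follows from Frobenius reciprocity and left exactness of $\operatorname{Hom}_H(-,V)$. One small correction: the identification $\operatorname{Hom}_H(\mathbf Z[H]\otimes_{\mathbf Z[H_\sigma]}\mathbf Z_{\epsilon_\sigma},V)\cong V^{H_\sigma,\epsilon_\sigma}$ is the induction--restriction (tensor--hom) adjunction, which holds for every subgroup, and that is what you need here, since the stabilizers $H_\sigma$ are finite and hence typically of infinite index in $H$, not finite-index as you wrote.
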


\begin{proof}
The orbit of an oriented cell $\sigma$ is the quotient of the free left
$H$-set $H\times\{\sigma\}$ by $(hh_0,\sigma)\sim
(h,\epsilon_\sigma(h_0)\sigma)$ for $h_0\in H_\sigma$.  Its oriented cellular
module is therefore the indicated induced orientation module.  The number of
$H$-orbits is finite because $H$ has finite index and $G$ has one edge orbit
and two face orbits.  This proves \eqref{eq:C1ind}--\eqref{eq:finite-induced}.

Frobenius reciprocity sends an $H$-map on
$\mathbf Z[H]\otimes_{\mathbf Z[H_\sigma]}\mathbf Z_{\epsilon_\sigma}$ to
the value of that map at $1\otimes1$, which lies in
$V^{H_\sigma,\epsilon_\sigma}$.  Evaluating an equivariant cochain on
\eqref{eq:boundary-coeffs} gives the displayed formula for $\delta_H$.
Left exactness of $\operatorname{Hom}_H(-,V)$ now gives
\eqref{eq:delta} and \eqref{eq:symb-kernel}.
\end{proof}

\subsection{Picard Farey symbols and the Gaussian Steinberg presentation}
\label{subsec:picard-compatibility}

\begin{theorem}[Picard Farey symbols and the Gaussian Steinberg presentation]
\label{thm:picard-modsym}
Let $\Sigma$ be a valid marked Picard Farey symbol and let $H=H_\Sigma$ be
the finite-index subgroup reconstructed by Theorem~5.6.  The data of
$\Sigma$---its Gaussian-rational octahedral occurrences, the local groups
$J_x$, and the ordered/cooriented source-to-target face transports---determine,
without using the universal twelve-sector subdivision:
\begin{enumerate}
 \item the $H$-orbits of oriented ideal edges and faces;
 \item their setwise stabilizers and orientation characters;
 \item the transport elements and incidence signs in every face boundary;
 \item the induced-module presentation \eqref{eq:finite-induced}; and
 \item for every left $H$-module $V$, the relation map $\delta_H$ in
       \eqref{eq:delta} and hence $\operatorname{Symb}_H(V)$.
\end{enumerate}
The construction is canonical up to the usual change of orbit
representatives, which gives canonically isomorphic induced summands.
\end{theorem}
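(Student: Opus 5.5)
The plan is to build everything from the coarse octahedral quotient $H\backslash\mathcal P$, where $\mathcal P$ is the set of Gaussian Farey octahedra, with all local data normalized by the placements $g_x$. We first consider the $H$-set of pairs (occurrence, incident ideal cell). By Lemma~\ref{lem:octahedral-fibres}, the octahedra containing a given ideal face or edge, taken up to $H$, are recorded by the occurrences $x\in X$ together with the $J_x$-orbits of faces and edges of the standard octahedron $\OO$. This is pure octahedral combinatorics: twelve edges, eight faces, and the finite action of $K$ on them. Concretely:
\begin{enumerate}
 \item The pairs (occurrence, face of $\OO$) modulo $J_x$ are grouped by the face pairings of $\Sigma$. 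Each $H$-orbit of ideal faces is exactly one paired class $\{(x,J_xF),(y,J_yF')\}$, since every ideal face lies on precisely two octahedra.
 \item For edges, we form the equivalence relation on $\bigsqcup_x J_x\backslash\{\text{edges of }\OO\}$ generated by the face transports $q$ acting on the boundary edges of $F$. Each class is an edge cycle around an ideal edge, and its classes are the $H$-edge orbits.
\end{enumerate}
Both steps only use $q$, $J_x$ and the incidences of $\OO$. The twelve-sector table of Proposition~\ref{prop:twelve-cells} never enters, because the ideal cells in Proposition~\ref{prop:steinberg-presentation} are literally the octahedral edges and faces. This gives item (1).

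For item (2), we compute stabilizers using the normalized words. For a face representative $F$ on $x$, any $h\in H$ fixing $g_xF$ either preserves the incident octahedron $g_x\OO$, and so lies in $g_xJ_{x,F}g_x^{-1}$ (allowing orientation-reversing setwise elements, which stabilize $F$ but may swap sides), or it swaps the two incident octahedra. In the swapping case it is given by $g_x q' g_x^{-1}$ for the self-pairing of that face class. The orientation character is then read off by checking whether the element preserves the cyclic order of the ordered triple. For edges, we follow the cycle from step (2): the return element of the cycle, composed with the local elements of $J_x$ fixing the edge, generates $H_e$. This is a finite stabilizer because it embeds in the stabilizer of a geodesic in the order-twelve setting. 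The sign is read from whether the element swaps the endpoints. The main obstacle is here. We must show that the cycle-generated group is \emph{all} of $H_e$, i.e.\ that every $h\in H_e$ is a product of transports along the cycle. We would argue that $h$ permutes the finitely many octahedra around $e$ cyclically, and that successive octahedra around $e$ are linked by face adjacencies. The elements realizing these steps are exactly the recorded $q$'s, conjugated by $g_x$, and uniqueness follows from Lemma~\ref{lem:ideal-face-transport}.

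For items (3)–(5), for each face representative $f_j=(\alpha,\beta,\gamma)$ on occurrence $x$, we express its three oriented edges $[\alpha,\beta],[\beta,\gamma],[\gamma,\alpha]$ as $\eta\,h\,e_i$ with $e_i\in\mathcal E_H$. The element $h$ is the product of the transports along the path in the edge-cycle equivalence from the chosen edge representative to this edge, conjugated into $H$ via $g_x$. The sign $\eta$ compares the orientations, and the choice of path matters only modulo $H_{e_i}$, which acts through $\epsilon_{e_i}$ on the value. This gives \eqref{eq:boundary-coeffs}. Proposition~\ref{prop:finite-orbits} then produces \eqref{eq:finite-induced} and $\delta_H$, hence $\operatorname{Symb}_H(V)=\ker\delta_H$. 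Finally, changing representatives by $j\in J_x$, by \eqref{eq:face-representatives}, or by the normalization change of Proposition~\ref{prop:arithmetic-normalized}, alters each $h_{jr}$ by right multiplication by stabilizer elements and left multiplication on the face side. This is exactly the canonical isomorphism $\operatorname{Ind}_{H_\sigma}^H\ZZ_\epsilon\cong\operatorname{Ind}_{hH_\sigma h^{-1}}^H\ZZ_\epsilon$, which proves canonicity.
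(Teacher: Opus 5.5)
Your proposal is correct and follows essentially the same route as the paper. Both arguments quotient the twelve edges and eight faces of each occurrence by $J_x$, take the equivalence closure under the face transports to obtain the $H$-orbits, read setwise stabilizers and orientation characters from loop labels and the coefficients $h_{jr},\eta_{jr}$ from path labels, and finish with Proposition~\ref{prop:finite-orbits}. Your side-preserving/side-swapping description of face stabilizers and your walk around an edge make explicit what the paper compresses into the statement that labels of loops are precisely the setwise stabilizers. One small slip: an element of $g_xJ_{x,F}g_x^{-1}$ preserves the side of the face and hence its cyclic order, so orientation reversal on a face occurs only in the side-swapping case.
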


\begin{proof}
For each occurrence retain its twelve coarse edges and eight coarse oriented
faces.  First quotient these finite sets by the local action of $J_x$.
For a stored face pairing $(F,F',q)$, use the verified convention
\[
 qF=\overline{F'}.
\]
Thus $q$ identifies the three source edges with the three target edges, and
the odd permutation between the two stored outward boundary orders supplies
the incidence signs.  Taking the equivalence closure of the local and paired
edge identifications gives the $H$-edge orbits; paired face classes give the
$H$-face orbits.

Attach to each elementary identification its known group label.  Products
along paths in the finite edge- or face-incidence groupoid transport a chosen
representative to every occurrence.  Labels of loops are precisely the
setwise cell stabilizers, and their parity on the ordered edge or face is the
orientation character.  Choosing a spanning forest in each orbit groupoid
therefore produces all $h_{jr}$ and $\eta_{jr}$ in
\eqref{eq:boundary-coeffs}.  Changing the forest conjugates stabilizers and
changes transports by the standard induced-module identifications, leaving
the presentation isomorphic.

This construction uses only the ideal edges and faces of the octahedral occurrences.  Proposition
\ref{prop:steinberg-presentation} identifies those cells themselves with the
Gaussian Steinberg complex, so the twelve Picard sectors inside an octahedron
carry no additional data relevant to $C_2^{\mathrm{id}}\to C_1^{\mathrm{id}}$.
Theorem~5.6 and its verified equivariance convention are used only to know
that the finite labelled groupoid is the quotient by the reconstructed
subgroup $H$.  Proposition~\ref{prop:finite-orbits} completes the argument.
\end{proof}

In particular, the finite Steinberg presentation is obtained from the decorated octahedral two-skeleton itself; the twelve Picard sectors are needed for the subgroup reconstruction theorem but not for this passage.  This does not provide a new reduction algorithm for arbitrary non-unimodular cusp symbols.

\subsection{The complete $\Gamma_1(2+i)$ presentation}
\label{subsec:gamma1-example}

Let $H=\Gamma_1(2+i)$ in the projective convention fixed in Section~6 and put
$w=(1+i)/2$.  Its Picard Farey symbol consists of one octahedron, has $J=1$,
and is torsion-free.  Label its outward oriented faces
\[
\begin{array}{llll}
 A=(0,1,\infty),&B=(1,1+i,\infty),
 &C=(1+i,i,\infty),&D=(i,0,\infty),\\
 E=(0,w,1),&F=(1,w,1+i),
 &G=(1+i,w,i),&H_0=(i,w,0).
\end{array}
\]
The pairings are $A\leftrightarrow C$, $B\leftrightarrow D$,
$E\leftrightarrow G$, and $F\leftrightarrow H_0$.

The matrices used in the earlier face-pairing calculation were recorded as return matrices carrying the target lift back to the source lift.  The source-to-target transports used here are their inverses.
Choose the following representatives in $\operatorname{PSL}_2(\mathbf Z[i])$:
\begin{equation}\label{eq:abcd}
\begin{aligned}
 a&=\begin{pmatrix}1&i\\0&1\end{pmatrix},
&b&=\begin{pmatrix}1&-1\\0&1\end{pmatrix},\\
 c&=\begin{pmatrix}-2i&i\\-2-i&1+i\end{pmatrix},
&d&=\begin{pmatrix}-1&1+i\\-1+2i&2-i\end{pmatrix}.
\end{aligned}
\end{equation}
They carry $A,B,E,F$ respectively to $C,D,G,H_0$ with opposite outward
boundary orientation.  For example,
\[
 a(0,1,\infty)=(i,1+i,\infty)
   =\overline{(1+i,i,\infty)}.
\]
All four matrices reduce modulo $(2+i)$ to the upper-unipotent subgroup, so
they lie in $H$.

There are three unoriented edge orbits, represented with orientations by
\[
 e_1=[0,1],\qquad e_2=[1,\infty],\qquad e_3=[0,w].
\]
Their underlying edge sets are
\[
\begin{array}{c|l}
e_1&01,\ i(1+i),\ 1w,\ iw\\
e_2&0\infty,\ 1\infty,\ i\infty,\ (1+i)\infty\\
e_3&0w,\ (1+i)w,\ 0i,\ 1(1+i).
\end{array}
\]
There are four face orbits, represented by $f_A=A$, $f_B=B$, $f_E=E$,
and $f_F=F$, with paired members $C,D,G,H_0$.  Torsion-freeness and the
finiteness of ambient ideal-cell stabilizers imply
\[
 H_{e_r}=H_{f_j}=1,
 \qquad \epsilon_{e_r}=\epsilon_{f_j}=1.
\]

The complete boundary expressions are
\begin{align}
 \partial_2f_A&=e_1+(1-b)e_2,\label{eq:bdA}\\
 \partial_2f_B&=(a-1)e_2-d^{-1}e_3,\label{eq:bdB}\\
 \partial_2f_E&=-(1+c^{-1}a)e_1+e_3,\label{eq:bdE}\\
 \partial_2f_F&=c^{-1}a\,e_1+(c+d^{-1})e_3.\label{eq:bdF}
\end{align}
For instance, $b[1,\infty]=[0,\infty]$, so the last edge of $A$ is
$[\infty,0]=-b e_2$.  The other identities follow similarly from the four
ordered face maps.  Thus, with rows $(e_1,e_2,e_3)$ and columns
$(f_A,f_B,f_E,f_F)$, the exact group-ring boundary matrix is
\begin{equation}\label{eq:gamma1-matrix}
 M_{\partial_2}=
 \begin{pmatrix}
 1&0&-(1+c^{-1}a)&c^{-1}a\\
 1-b&a-1&0&0\\
 0&-d^{-1}&1&c+d^{-1}
 \end{pmatrix}\in M_{3\times4}(\mathbf Z[H]).
\end{equation}
Together with
\[
 \partial_1e_1=[1]-[0],\qquad
 \partial_1e_2=[\infty]-[1],\qquad
 \partial_1e_3=[w]-[0],
\]
this gives the complete presentation
\[
 \mathbf Z[H]^4\xrightarrow{M_{\partial_2}}
 \mathbf Z[H]^3\xrightarrow{\partial_1}
 \operatorname{St}_F\longrightarrow0.
\]
Direct substitution shows $\partial_1M_{\partial_2}=0$ column by column.
For exact reproduction one may use
\[
 c^{-1}a=\pm\begin{pmatrix}1+i&-1\\2+i&-1\end{pmatrix},
 \qquad
 d^{-1}=\begin{pmatrix}-2+i&1+i\\-1+2i&1\end{pmatrix}.
\]

For a left $H$-module $V$, the coefficient relation map
$\delta_H:V^3\to V^4$ is therefore
\begin{equation}\label{eq:gamma1-delta}
\begin{split}
 (\delta_Hv)_A&=v_1+v_2-bv_2,\\
 (\delta_Hv)_B&=av_2-v_2-d^{-1}v_3,\\
 (\delta_Hv)_E&=-v_1-c^{-1}av_1+v_3,\\
 (\delta_Hv)_F&=c^{-1}av_1+cv_3+d^{-1}v_3,
\end{split}
\qquad
 \operatorname{Symb}_H(V)=\ker\delta_H.
\end{equation}

This is the orbit-compressed form of Cremona's Gaussian Manin-symbol
presentation.  His twelve right-coset symbols for this index-twelve group are
first subjected to the two-term edge-stabilizer relations; the resulting
three edge orbits are $e_1,e_2,e_3$.  The two Gaussian three-term relation
families, represented by the two face types $A$ and $E$, split into the four
$H$-face orbits above.  Thus \eqref{eq:gamma1-matrix} is equivalent to the
specialization of Cremona's relation ideal, but is organized by the coarse
Picard Farey quotient rather than by all twelve cosets.

\subsection{Comparison of the finite encodings}\label{subsec:comparison}

The passages used here retain different information:
\[
\begin{gathered}
 \text{valid Picard Farey symbol}
 \longrightarrow
 \text{labelled quotient of the ideal octahedral $2$-skeleton},\\
 \text{labelled quotient of the ideal octahedral $2$-skeleton}
 \longrightarrow
 \text{finite induced Steinberg presentation},\\
 \text{finite induced Steinberg presentation}
 \longrightarrow
 \operatorname{Symb}_H(V).
\end{gathered}
\]
The first arrow forgets the interiors of the octahedra and the twelve-sector
Picard-cell subdivision, while retaining ideal vertices, edges, faces, local
cell isotropy, face transports, and incidence signs.  The second arrow keeps
only the linearized orbit, stabilizer, orientation, and boundary data.  The
last arrow also chooses a coefficient module and retains only the kernel of
the resulting cochain relation map.  Neither of the latter two objects
remembers a preferred three-dimensional octahedral assembly or a marked
Picard cell.  Conversely, the Picard Farey symbol determines the ideal
presentation directly by Theorem~\ref{thm:picard-modsym}; materializing the
complete finite Picard-cell action is unnecessary.

\subsection{Hecke operators and limitations}\label{subsec:limitations}

Cremona's Euclidean continued fractions and Gunnells' Voronoi reduction
provide established methods for reducing symbols and computing Hecke action.
The present paper does not prove a new Bianchi modular-symbol theory, a new
continued-fraction algorithm, a new Hecke-reduction algorithm, computational
superiority over Cremona/Gunnells/Voronoi methods, or an overconvergent
control theorem.  The result proved here is the narrower compatibility
statement that the Picard Farey encoding already contains, at the octahedral level, the finite orbit data for the established Gaussian
Steinberg presentation.

\section{Open problems}\label{sec:open}
The main reconstruction problem for the Picard group is now settled at the level of finite Farey-octahedral data, but several stronger forms of reduction remain open.  They differ substantially in difficulty and should not be conflated.

\subsection{Admissibility and minimum boundary complexity}
Theorem~\ref{thm:torsionfree-farey-domain} fixes the number of \emph{complete Farey octahedra} in every torsion-free whole-octahedron fundamental domain.  This is not yet the three-dimensional analogue of Kulkarni's minimum-side theorem.  Different connected unions of the same number of octahedra can have different boundary sizes, and additional internal adjacencies or boundary coarsenings for which a single side-pairing map extends across the merged face may reduce the boundary further.

\begin{problem}[Picard admissibility]\label{prob:admissibility}
Find intrinsic geometric conditions selecting, for every finite-index Picard subgroup, a preferred class of Farey fundamental polyhedra.  The conditions should be stronger than Poincar\'e fundamentality and should control boundary complexity, edge-cycle relations and local stabilizers.
\end{problem}

\begin{problem}[Minimum boundary complexity]\label{prob:min-boundary}
For torsion-free \(H\), determine the minimum number of exposed Farey triangles among connected \([G:H]/12\)-octahedron domains.  Decide whether a natural reduction has a unique outcome, or at least finitely many reduced outcomes, and compare the resulting side-pairing number with the rank of \(H\).  For groups with torsion, formulate the corresponding problem using the local octahedral groups and lower-dimensional stabilizers.
\end{problem}

A solution would be substantially stronger than the spanning-tree construction: it would identify which tree, and which additional internal adjacencies or legitimate boundary coarsenings produce the most economical geometry.

\subsection{Arithmetic normal forms and geometric invariants}
Definition~\ref{def:farey-symbol} keeps Gaussian rational vertices visible, but no analogue of Kulkarni's generalized Farey \emph{sequence} can exist literally because the cusp set lies on a sphere rather than a circle.  A natural next problem is to find a preferred arithmetic normal form for the finite Gaussian rational incidence pattern.

\begin{problem}[Reduced Gaussian Farey symbols]\label{prob:gaussian-normal-form}
Develop an intrinsic reduction of Picard Farey symbols expressed directly in Gaussian rational coordinates.  The reduction should control the sizes of numerators and denominators, interact naturally with Hockman's geodesic Gaussian continued fractions, and preserve the cusp and stabilizer information of Section~\ref{sec:invariants}.
\end{problem}

Such a normal form would be the closest higher-dimensional analogue of using continued fractions and generalized Farey sequences to construct and simplify classical Farey symbols.

\subsection{The other Euclidean Bianchi fields}
Cremona and Yasaki describe the Euclidean cells for the other imaginary quadratic Euclidean rings, while Nakada--Natsui--Thuswaldner develop corresponding Farey structures \cite{Cremona1984,Yasaki2010,NakadaNatsuiThuswaldner2026}.  The cell shapes and stabilizers are not uniform, so the Gaussian twelve-cell proof should not simply be copied.

\begin{problem}[A second Euclidean field]\label{prob:second-field}
For \(K=\QQ(\sqrt{-2})\) or \(\QQ(\sqrt{-3})\), fix the standard Farey cell or finite set of cell types, compute their stabilizers and their subdivisions by a standard Bianchi fundamental cell, and prove the analogue of Theorem~\ref{thm:farey-reconstruction}.  Include at least one subgroup with more than one Farey-cell orbit.
\end{problem}

The field \(\QQ(\sqrt{-2})\) is attractive because quadrangular faces already occur; \(\QQ(\sqrt{-3})\) tests a different unit group and a different pattern of ambient cells.  Only after one non-Gaussian case is understood should one expect a uniform theorem for all five Euclidean fields.

\subsection{Effective Farey reduction and Hecke operators}
Proposition~\ref{prop:steinberg-presentation} gives the integral Gaussian relation module.  The remaining computational question is to find a particularly effective normal form for paths and Hecke images.

\begin{problem}[Farey reduction of cusp paths]\label{prob:path-reduction}
Given \([\beta]-[\alpha]\in\Delta_0(\QQ(i))\), construct a direct reduction to the finite oriented edge orbits determined by a Picard Farey symbol, prove correctness, and compare path length, coefficient growth and running time with Cremona's continued fractions and Vorono\"i reduction.
\end{problem}

\begin{problem}[Hecke compatibility]\label{prob:hecke}
Determine whether Hecke correspondences can be reduced directly on the finite Farey presentation with competitive complexity.  Compare the resulting matrix sizes and coefficient growth with existing Cremona/Gunnells methods.
\end{problem}

\subsection{Overconvergent refinements}
Only after an effective Hecke-compatible reduction is available does it become reasonable to ask whether the same finite presentation is convenient for distribution-valued symbols.  A Bianchi analogue of the Pollack--Stevens overconvergent control theorem would require new analytic input and is a separate project rather than a formal consequence of the geometry developed here.

\subsection{Noncongruence symbols and congruence recognition}
The next arithmetic test should start from an explicitly presented noncongruence subgroup, preferably one of the normal examples of Fine--Newman, and construct its decorated octahedral diagram in the same normalization as the congruence family of Section~\ref{sec:level-five-family}.  Two questions then separate naturally.  First, can congruence or noncongruence be recognized efficiently from a Picard Farey symbol without expanding to a large congruence quotient?  Second, which visible features of the symbol---cusp lattices, local groups, quotient incidence, or return matrices---are sensitive to congruence?  The classical theory suggests that the symbol should be a useful input for such recognition, but no Picard analogue of a Wohlfahrt-type criterion is proved here.

\subsection{Further finite tests and larger quotients}
The \(\Gamma_0(3)\) calculation supplies the first two-octahedron example with two different nontrivial local groups and genuine transport between the octahedra.  A useful next finite test would have at least three octahedral occurrences and a cycle in the quotient dual graph.  Such an example is not needed for Theorem~\ref{thm:farey-reconstruction}; its purpose would be to study boundary minimization and possible normal forms for larger quotients.

The results above constitute a first Picard step toward a higher-dimensional Farey-symbol theory: a finite Gaussian-rational octahedral object reconstructs the subgroup action and passes directly to the Gaussian Steinberg presentation.  They do not provide a canonical or reduced symbol, an intrinsic admissibility theory, a minimum-side theorem, or an algorithmically superior continued-fraction or Hecke reduction.  The open problems isolate these additional requirements rather than treating them as consequences of the reconstruction theorem.

\appendix
\section{Exact Picard examples}\label{app:examples}

\subsection{The Picard group}\label{app:picard}
Put \(\pi=1+i\) and retain the matrices \(S,R,T,U,X,Y\) from Subsection~\ref{subsec:prelim-picard}.  A standard Picard fundamental cell has vertices
\[
0,\quad \infty,\quad
P_1=(1/2,0,\sqrt3/2),\quad
P_2=(1/2,1/2,1/\sqrt2),\quad
P_3=(0,1/2,\sqrt3/2).
\]
Its principal boundary faces are paired by
\[
\begin{array}{c|c}
\text{face(s)}&\text{pairing}\\ \hline
(0,P_3,\infty)&S\\
(0,P_1,\infty)&R\\
(0,P_1,P_2)\leftrightarrow(\infty,P_1,P_2)&X\\
(0,P_2,P_3)\leftrightarrow(\infty,P_2,P_3)&Y.
\end{array}
\]
Let \(j=(0,0,1)\) on the edge \(0\infty\).  Subdivision through \(j\) removes the self-face inversions and gives the reduced quotient square
\[
j\xrightarrow{R}P_1\xrightarrow{X}P_2\xrightarrow{Y}P_3\xrightarrow{S}j.
\]
The local groups are
\[
\begin{array}{c|c}
\text{cell}&\text{stabilizer}\\ \hline
j&C_2\times C_2\\
P_1&S_3\\
P_2&A_4\\
P_3&S_3\\
jP_1&C_2\\
P_1P_2&C_3\\
P_2P_3&C_3\\
P_3j&C_2.
\end{array}
\]
These give the presentation \eqref{eq:picard-presentation}.  The six raw edge cycles may be represented by
\[
RS,\quad RX,\quad X,\quad X^{-1}Y,\quad Y,\quad SY,
\]
with stabilizers \(C_2,C_2,C_3,C_2,C_3,C_2\), respectively.  The cusp stabilizer is
\[
G_\infty=\langle T=XS,U=YR,L=RS\mid [T,U]=1,\ L^2=1,\ LTL=T^{-1},\ LUL=U^{-1}\rangle
\cong\ZZ^2\rtimes C_2,
\]
so the cusp cross-section is the pillowcase \(S^2(2,2,2,2)\).

\subsection{The subgroup \texorpdfstring{$\Gamma_0(1+i)$}{Gamma0(1+i)}}\label{app:gamma0pi}
Let
\[
H=\Gamma_0(\pi)=\left\{\begin{pmatrix}a&b\\c&d\end{pmatrix}\in G:c\equiv0\pmod\pi\right\}.
\]
Reduction modulo \(\pi\) gives \([G:H]=3\).  Label right cosets by
\[
\PP^1(\mathbf F_2)=\{q_0=(0:1),q_\infty=(1:0),q_1=(1:1)\}
\]
and choose representatives \(1,X,X^2\).  Then
\[
S\equiv R=(q_0\ q_\infty),\qquad
X\equiv Y=(q_0\ q_\infty\ q_1).
\]
Thus the Picard quotient square lifts to three faces.  The \(R\)- and \(S\)-edges each split into one free lift and one folded \(C_2\)-lift, while the \(X\)- and \(Y\)-edges have one free lift.  The quotient has
\[
f_2=3,\qquad f_1=6,\qquad f_0=5.
\]
At the Farey-octahedron level there is one occurrence with local group
\[
J=\{1,ac^2,c^2a,aca\},\qquad |J|=4,
\]
so \([K:J]=3\).  The three cosets of \(J\) expand to the three Picard cells above.  This is the smallest example in which one ambient cell type has both singular and free lifts.

The stabilizer-labelled quotient may be summarized as follows:
\begin{center}\small
\begin{tabular}{lll}
\toprule
cell over the Picard square & number of lifts & stabilizers\\
\midrule
square & 3 & all trivial\\
\(R\)-edge & 2 & \(1\), \(C_2\)\\
\(X\)-edge & 1 & trivial\\
\(Y\)-edge & 1 & trivial\\
\(S\)-edge & 2 & \(1\), \(C_2\)\\
\(j\) & 2 & \(C_2\), \(C_2\times C_2\)\\
\(P_1\) & 1 & \(C_2\)\\
\(P_2\) & 1 & \(C_2\times C_2\)\\
\(P_3\) & 1 & \(C_2\)\\
\bottomrule
\end{tabular}
\end{center}
The lifted raw edge cycles consist of eight order-two cycles and four free cycles.  There are two cusp classes; their translational lattices have indices \(1\) and \(2\) in \(\ZZ[i]\), and both retain the order-two rotational part, so both cross-sections are pillowcases.

\subsection{The three one-octahedron index-twelve groups}\label{app:octahedra}
Lee's two torsion-free index-twelve groups and Hockman's torsion index-twelve group use the same regular ideal octahedron
\[
\OO=\left\{\infty,0,1,i,1+i,\frac{1+i}{2}\right\},
\]
with all dihedral angles \(\pi/2\).  For all three groups the octahedral quotient has one occurrence and \(J=1\), so Theorem~\ref{thm:farey-reconstruction} expands it to all twelve Picard cells.  The groups are distinguished by the ordered face pairings.

\begin{figure}[htbp]
\centering
\begin{tikzpicture}[>=Latex,scale=.88,transform shape]
  \foreach \x/\name/\lab in {-4.2/{Lee I}/{two torus cusps},0/{Lee--Whitehead}/{two torus cusps},4.2/{Hockman}/{three cusp orbits}}{
    \begin{scope}[xshift=\x cm]
      \coordinate (N) at (0,1.55); \coordinate (S) at (0,-1.05);
      \coordinate (L) at (-1.1,.35); \coordinate (M) at (-.35,-.2);
      \coordinate (R) at (1.1,.35); \coordinate (U) at (.35,.9);
      \draw (L)--(M)--(R)--(U)--cycle;
      \draw (N)--(L) (N)--(M) (N)--(R) (N)--(U);
      \draw (S)--(L) (S)--(M) (S)--(R); \draw[dashed] (S)--(U);
      \node[align=center] at (0,-1.65) {\name\\$J=1$\\\scriptsize \lab};
    \end{scope}
  }
\end{tikzpicture}
\caption{The three index-twelve one-octahedron examples have the same Farey carrier and the same octahedral local group \(J=1\), but different ordered face identifications.  Their different edge cycles and cusp structures show why the face-pairing part of a Picard Farey symbol cannot be omitted.}
\label{fig:three-index12}
\end{figure}

Lee's first class has four side-pairing generators \(\alpha,\beta,\gamma,\delta\) and three four-step identity edge-cycle relations
\[
\beta\alpha^{-1}\beta^{-1}\alpha=1,
\qquad
\delta^{-1}\gamma\delta^{-1}\alpha=1,
\qquad
\gamma^{-1}\delta^{-1}\gamma^{-1}\beta=1.
\]
Its two cusp links are tori.  Lee's second class, identified with the Whitehead-link group, may be written
\[
\langle\alpha,\beta,\gamma,\delta\mid
\delta\alpha^{-1}\beta^{-1}\alpha,
\ \beta^{-1}\gamma\delta^{-1}\alpha,
\ \gamma^{-1}\delta^{-1}\gamma\beta\rangle,
\]
and also has two torus cusps \cite{Lee1984}.

Hockman's subgroup is
\[
H_{\rm Hoc}=\langle\tau_1,\tau_i,\delta_1,\delta_i\rangle.
\]
Writing \(w=(1+i)/2\), the four unoriented octahedral edge orbits may be represented by
\[
\begin{array}{c|c|c}
\text{orbit}&\text{ordered traversal}&\text{cycle transformation}\\
\hline
\text{vertical}&[1,\infty]\to[1+i,\infty]\to[i,\infty]\to[0,\infty]&1\\
\text{equatorial I}&[0,1]\leftrightarrow[i,1+i]&(\tau_{-i}\delta_1)^2\\
\text{equatorial II}&[0,i]\leftrightarrow[1,1+i]&(\tau_{-1}\delta_i)^2\\
\text{floor}&[i,w]\to[1+i,w]\to[1,w]\to[0,w]&1.
\end{array}
\]
Thus there are two equatorial order-two cycles and two four-edge identity cycles.  The opposite based edges give conjugate order-two words.  Hockman's group therefore has torsion even though its octahedral local group \(J\) is trivial.  Its cusp action has three orbits.  This example is the simplest warning that top-dimensional local groups do not determine lower-dimensional torsion.

\subsection{The level-\((2+i)\) congruence family}\label{app:level5}
Let \(\mathfrak p=(2+i)\).  Reduction modulo \(\mathfrak p\) sends \(i\) to \(3\in\mathbf F_5\) and identifies the finite quotient with \(\PSL_2(\mathbf F_5)\) of order sixty.  The Borel subgroup, the upper-unipotent subgroup, and the identity subgroup give \(\Gamma_0(\mathfrak p),\Gamma_1(\mathfrak p),\Gamma(\mathfrak p)\), with indices six, twelve and sixty.

For \(\Gamma_0(\mathfrak p)\), one octahedral occurrence has
\[
J=\{1,c^2a\}=\langle c^2a\rangle,
\qquad
c^2a=\begin{pmatrix}-i&-1+i\\0&i\end{pmatrix}.
\]
A connected transversal for \(J\backslash K\) is
\[
1,c,c^2,ca,aca,a.
\]
The four face classes are those of \eqref{eq:g0-faceclasses}.  The exact finite-edge calculation detects order-two torsion.

For \(\Gamma_1(\mathfrak p)\), \(J=1\), and the four unoriented pairings are \eqref{eq:g1-facepairs}.  For \(\Gamma(\mathfrak p)\), five representatives are
\[
I,\ S,\ R,\ X^{-1}S=\begin{pmatrix}1&0\\1&1\end{pmatrix},\
Y^{-1}R=\begin{pmatrix}1&0\\-i&1\end{pmatrix}.
\]
Every local group is trivial.  The complete occurrence/face table is summarized by the following adjacency table, in which the entry \(O_j:Q\) means that the indicated source face is paired with face \(Q\) of \(O_j\):
\[
\begin{array}{c|cccccccc}
 &A&B&C&D&E&F&G&H\\\hline
O_0&O_2:A&O_4:G&O_3:F&O_1:D&O_3:D&O_1:F&O_2:G&O_4:A\\
O_1&O_4:H&O_2:B&O_3:A&O_0:D&O_3:G&O_0:F&O_4:B&O_2:H\\
O_2&O_0:A&O_1:B&O_3:H&O_4:E&O_3:B&O_4:C&O_0:G&O_1:H\\
O_3&O_1:C&O_2:E&O_4:F&O_0:E&O_4:D&O_0:C&O_1:E&O_2:C\\
O_4&O_0:H&O_1:G&O_2:F&O_3:E&O_2:D&O_3:C&O_0:B&O_1:A
\end{array}
\]
Each pair of distinct rows is connected by two unoriented face classes, giving the multigraph \(2K_5\) in Figure~\ref{fig:principal-2k5}.  Expansion by the fixed twelve-cell table gives \(6,12,60\) Picard cells in the three cases, and all \(312\) transitions under \(S,R,X,Y\) agree with the direct finite-quotient actions.

\subsection{The two-octahedron example \texorpdfstring{$\Gamma_0(3)$}{Gamma0(3)}}\label{app:gamma03}
Since \(3\) is inert in \(\ZZ[i]\), reduction gives
\[
\Gamma_0(3)\backslash G\cong\PP^1(\mathbf F_9),\qquad |\PP^1(\mathbf F_9)|=10.
\]
The right action convention is
\[
[u:v]\begin{pmatrix}a&b\\c&d\end{pmatrix}=[ua+vc:ub+vd].
\]
The action of \(K\) has two orbits:
\[
A=\{0,\infty,-1,i,-1+i,1-i\},\qquad
B=\{1,1+i,-1-i,-i\}.
\]
Take
\[
g_A=1,\qquad g_B=\begin{pmatrix}1&0\\1&1\end{pmatrix}.
\]
Thus the two entries of the arithmetic Picard Farey symbol are the Gaussian-rational octahedra
\[
   \OO_A=\OO,
   \qquad
   \OO_B=g_B\OO
   =\left(1,0,\frac12,\frac{1+i}{2},\frac{3+i}{5},\frac{2+i}{5}\right),
\tag{A.1}\label{eq:gamma03-rational-octahedra}
\]
with the order inherited from \((\infty,0,1,i,1+i,w)\).  This makes visible the arithmetic content that is suppressed when the two occurrences are denoted only by \(A\) and \(B\).
Then
\[
J_A=\{1,c^2a\}=\langle c^2a\rangle,\qquad |J_A|=2,
\]
\[
J_B=\{1,c,c^2\}=\langle c\rangle,\qquad |J_B|=3.
\]
Connected transversals may be chosen as
\[
1,c,c^2,ca,aca,a
\]
for \(J_A\backslash K\), and
\[
1,a^2c^2,ca,a^2
\]
for \(J_B\backslash K\).

The eight octahedral face classes have ordered representatives whose targets may be summarized as follows, with \(w=(1+i)/2\):
\[
\begin{array}{c|c|c}
\text{source}&\text{target}&\text{return matrix}\\ \hline
A:(0,1,\infty)&A:(0,1,\infty)&\begin{psmallmatrix}-i&i\\0&i\end{psmallmatrix}\\[1mm]
A:(1,1+i,\infty)&A:(i,0,\infty)&\begin{psmallmatrix}-1&-1\\0&-1\end{psmallmatrix}\\[1mm]
A:(0,w,1)&B:(i,0,\infty)&-I\\[1mm]
A:(1,w,1+i)&B:(i,w,0)&\begin{psmallmatrix}-1+2i&1\\3i&1-i\end{psmallmatrix}\\[1mm]
B:(0,1,\infty)&B:(0,1,\infty)&\begin{psmallmatrix}-i&i\\0&i\end{psmallmatrix}\\[1mm]
B:(1+i,i,\infty)&A:(i,w,0)&\begin{psmallmatrix}-1+2i&1\\3i&1-i\end{psmallmatrix}\\[1mm]
B:(i,0,\infty)&A:(0,w,1)&-I\\[1mm]
B:(1+i,w,i)&B:(1+i,w,i)&\begin{psmallmatrix}-4i&-1+2i\\-3-6i&4i\end{psmallmatrix}
\end{array}
\]
Every transition reverses coorientation.  After expansion there are ten Picard cells, eighteen internal facet pairs and twenty-two exposed oriented face pairings.  All forty facet pairs are involutive at complete-flag level.  The resulting permutations are
\[
\begin{aligned}
S={}&(A[1]\ A[c])(A[c^2]\ B[1])(A[aca]\ B[ca])(A[a]\ B[a^2c^2]),\\
R={}&(A[1]\ A[c])(A[ca]\ B[a^2])(A[aca]\ B[a^2c^2])(A[a]\ B[ca]),\\
X={}&(A[1]\ A[c]\ A[c^2])(A[ca]\ A[aca]\ A[a])(B[a^2c^2]\ B[a^2]\ B[ca]),\\
Y={}&(A[1]\ A[c]\ A[ca])(A[c^2]\ A[aca]\ A[a])(B[1]\ B[a^2c^2]\ B[ca]),
\end{aligned}
\]
with omitted points fixed.  These agree entry-by-entry with the direct congruence action on \(\PP^1(\mathbf F_9)\).

\subsection{What the examples establish}
The examples isolate three different phenomena.  First, \(\Gamma_0(1+i)\) shows that a single octahedral occurrence may have nontrivial local isotropy and hence fewer than twelve Picard cells.  Second, the three index-twelve groups show that the same octahedron with the same top-dimensional local group can support distinct subgroup structures, so ordered face pairing is indispensable.  Third, \(\Gamma_0(3)\) shows that the local groups and ordered face correspondences also reconstruct a quotient with more than one octahedral occurrence and genuine transport between them.

\subsection*{AI Declaration} The work for this project was started in summer 2025, ChatGPT was used for editorial assistance, literature organization, creation of figures; responsibility for the mathematical statements and proofs remains with the author.

\end{document}